\def\addnotation#1: #2#3{$#1$ \> \parbox{3.8in}{#2 \dotfill \pageref{#3}}\\}
\def\addnotationtwo#1: #2#3#4{$#1$ \> \parbox{3.8in}{#2 \dotfill \pageref{#3}, \pageref{#4}}\\}
\newenvironment{notestoself}{%
  \par \vspace{.2cm} { \bf Note to self:}%
}{%
  \vspace{.2cm}
}
\numberwithin{equation}{section}
\newcommand{\lp}{\left(}
  \newcommand{\rp}{\right)}
\newcommand{\lb}{\left\{}
  \newcommand{\rb}{\right\}}
\newcommand{\ls}{\left[} %
  \newcommand{\rs}{\right]}
\newcommand{\PP}{{\mathbb  P}}
\newcommand{\RR}{{\mathbb  R}}
\newcommand{\ZZ}{{\mathbb Z}}
\DeclareMathOperator*{\dom}{dom}
\DeclareMathOperator{\im}{ran}
\newcommand{\Pdmh}{\mathcal{P}_{M,h}}
\newcommand{\Pmh}{\mathcal{P}_{M,h}}
\newcommand{\Pdms}{\mathcal{P}_{M,s}}
\newcommand{\AL}[2]{I^L_{#1,#2}} %
\newcommand{\AU}[2]{I^U_{#1,#2}}
\newcommand{\IL}[2]{I^L_{#1,#2}}
\newcommand{\IU}[2]{I^U_{#1,#2}}
\renewcommand{\ll}[3]{l_{#1, #2, #3}}
\newcommand{\uu}[3]{u_{#1, #2, #3}}
\newcommand{\NS}[1]{N^S_{#1}}
\newcommand{\NB}[1]{N_{#1}} %
\newcommand{\ibf}{\mathbf{i}}
\newcommand{\abf}{\boldsymbol{\alpha}}
\newcommand{\fuia}{f^U_{\ibf, \abf}}
\newcommand{\flia}{f^L_{\ibf, \abf}}
\newcommand{\vp}{\varphi}
\global\long\def\inv#1{\frac{1}{#1}}
\newtheorem{theorem}{Theorem}[section]
\newtheorem{lemma}{Lemma}[section]
\newtheorem{definition}{Definition}[section]
\newtheorem{corollary}{Corollary}[section]
\newtheorem{proposition}{Proposition}[section]
\newtheorem{example}{Example}[section]
\newtheorem{remark}{Remark}[section]
\newtheorem{assumption}{Assumption}[section]
\newcommand{\prf}{\begin{proof}{Proof}}
  \NewDocumentCommand\Nb{O{}}{
    \ifstrempty{#1}{
      N_{[\, ]}
    }{
      N_{[\, ]}\left( {#1} \right)
    }
  }
  \NewDocumentCommand\cC{O{}}{
    \ifstrempty{#1}{
      \mathcal{C}
    }{
      \mathcal{C} \left( {#1} \right)
    }
  }
  \NewDocumentCommand{\ceil}{s O{} m}{%
    \IfBooleanTF{#1} %
    {\left\lceil#3\right\rceil} %
    {#2\lceil#3#2\rceil} %
  }
\begin{document}

  \begin{frontmatter}
    \title{Global Rates of Convergence of the MLEs of  Log-concave and $s-$concave Densities}
    \runtitle{Log- and S-Concave MLE Global Rates}

    \begin{aug}
      \author{\fnms{Charles R.}
        \snm{Doss}\thanksref{t1,m1}\ead[label=e1]{cdoss@stat.umn.edu}\ead[label=u2,url]{http://users.stat.umn.edu/\textasciitilde
          cdoss/}}
      \and
      \author{\fnms{Jon A.} \snm{Wellner}\thanksref{t2,m1}\ead[label=e2]{jaw@stat.washington.edu}}
      \ead[label=u1,url]{http://www.stat.washington.edu/jaw/}

      \thankstext{t1}{Supported by NSF Grant DMS-1104832}
      \thankstext{t2}{Supported in part by NSF Grant DMS-1104832 and NI-AID grant 2R01 AI291968-04}
      \runauthor{Doss and Wellner}

      \affiliation{University of Minnesota; \ University of Washington\thanksmark{m1}}

      \address{School of Statistics \\University of Minnesota\\Minneapolis,  MN 55455\\
        \printead{e1}\\
        \printead{u2}}

      \address{Department of Statistics, Box 354322\\University of Washington\\Seattle, WA  98195-4322\\
        \printead{e2}\\
        \printead{u1}}
    \end{aug}

    \begin{abstract}
      We establish global rates of convergence for the Maximum Likelihood
      Estimators (MLEs) of log-concave and $s$-concave densities on $\RR$.
      The main finding is that the rate of convergence of the MLE in the
      Hellinger metric is no worse than $n^{-2/5}$ when $-1 < s < \infty$
      where $s=0$ corresponds to the log-concave case.  We also show that the
      MLE does not exist for the classes of $s$-concave densities with $s < -
      1$.
    \end{abstract}

    \begin{keyword}[class=AMS]
      \kwd[Primary ]{62G07}
      \kwd{62H12}
      \kwd[; secondary ]{62G05}
      \kwd{62G20}
    \end{keyword}

    \begin{keyword}
      \kwd{bracketing entropy}
      \kwd{consistency}
      \kwd{empirical processes}
      \kwd{global rate}
      \kwd{Hellinger metric}
      \kwd{induction}
      \kwd{log-concave}
      \kwd{s-concave}
    \end{keyword}

  \end{frontmatter}

  \smallskip

  \section{Introduction and overview}
  \label{sec:intro}

\subsection{Preliminary definitions and notation}
  We study global rates of convergence of nonparametric estimators of
  log-concave and $s$-concave densities, with focus on maximum likelihood
  estimation and the Hellinger metric.  A density $p$ on $\RR^d$ is {\sl log-concave} if
  \begin{eqnarray*}
    p = e^{\varphi} \ \ \mbox{where} \ \ \ \varphi : \RR^d \mapsto [-\infty,
    \infty)  \ \ \mbox{is concave} .
  \end{eqnarray*}
  We denote the class of all such densities $p$ on $\RR^d$ by ${\cal P}_{d,0}$.
  Log-concave densities are always unimodal and have convex level sets.
  Furthermore, log-concavity is preserved under marginalization and convolution.
  Thus the classes of log-concave densities can be viewed
  as natural nonparametric extensions of the class of Gaussian densities.

  The classes of log-concave densities on $\RR $ and $\RR^d$ are special cases of the classes of
  $s-$concave densities studied and developed by
  \cite{MR0388475,MR0404559},   %
  \cite{MR0450480},  %
  and \cite{MR0428540}.   %
  \cite{MR954608}, pages 84-99, gives a useful summary.
  These classes are defined by the generalized means of order $s$ as follows.
  Let
  \begin{eqnarray*}
    M_s (a,b; \theta) \equiv \left \{
      \begin{array}{l l} ( (1-\theta) a^s + \theta b^s )^{1/s}, & s \not= 0, \ a,b \ge 0, \\
        a^{1-\theta} b^{\theta}, & s = 0, \\   \min ( a,b ) , & s = -\infty .
      \end{array} \right .
  \end{eqnarray*}
  Then $p \in \widetilde{{\cal P}}_{d,s}$, the class of $s-$concave densities
  on $C \subset \RR^d$, if $p$ satisfies
  $$
  p((1-\theta)x_0 + \theta x_1) \ge M_s ( p(x_0) , p(x_1); \theta )
  $$
  for all $x_0, x_1 \in C$ and $\theta \in (0,1)$.  It is not hard to see
  that $\widetilde{{\cal P}}_{d,0} = {\cal P}_{d,0}$ consists of densities of the form $p =
  e^{\varphi}$ where $\varphi \in [-\infty, \infty)$ is concave;
  densities $p$ in $\widetilde{{\cal P}}_{d,s}$ with $s<0$ have the form
  $p = \varphi^{1/s}$ where $\varphi \in [0,\infty)$ is convex; and densities $p$ with $s>0$
  have the form $p= \varphi_{+}^{1/s}$ where $x_+ = \max(x,0)$
  and $\varphi$ is concave on $C$ (and
  then we write $\widetilde{{\cal P}}_{d,s} (C)$); see for example
  \cite{MR954608} page 86.
  These classes are nested since
  \begin{align}
    \label{sConcaveNestingProperty}
    \widetilde{ {\cal P}}_{d,s}(C) \subset \widetilde{{\cal P}}_{d,0} 
    \subset \widetilde{{\cal P}}_{d,r} \subset \widetilde{{\cal P}}_{d,-\infty}  ,
     \quad \mbox{if}  \ -\infty < r < 0< s < \infty.
  \end{align}
  Here we view the classes $\widetilde{\cal P}_{1,s}$ defined above for $d=1$ in terms of the generalized means $M_s$ as being
  obtained as increasing transforms  $h_s$ of the class of concave functions on $\RR$ with
  \begin{eqnarray*}
    h_s (y) = \left \{ \begin{array}{l l}  e^y , & s = 0, \\ (-y)_+^{1/s}, & s < 0 , \\ y_+^{1/s}, & s > 0 .
      \end{array} \right .
  \end{eqnarray*}
  Thus with $\lambda$ denoting Lebesgue measure on $\RR^d$ we define
  \begin{equation*}
    \mathcal{P}_{d,s} = \lb p = h_s(\vp) \colon \vp \mbox{ is concave on } \RR^d \rb
    \bigcap \lb p \colon \int p \, d\lambda = 1 \rb
  \end{equation*}
  where the concave functions $\vp$ are assumed to be closed (i.e. upper
  semicontinuous), proper, and are viewed as concave functions on all of
  $\RR^d$ rather than on a (possibly) specific convex set $C$.
  Thus we consider $\varphi$ as a function from $\RR$ into
  $ [-\infty, \infty)$.  See (\ref{eq:defn:concavefunction})
  in Section~\ref{sec:BasicDefnsAndNotation}.  This view simplifies our
  treatment in much the same way as the treatment in \cite{MR2766867}, but
  with ``increasing'' transformations replacing the ``decreasing''
  transformations of Seregin and Wellner, and ``concave functions'' here
  replacing the ``convex functions'' of Seregin and Wellner.

\subsection{Motivations and rationale}
There are many reasons to consider the $s-$concave classes ${\cal P}_s $ with
$s \ne 0$, and especially those with $s<0$. In particular, these classes
contain the log-concave densities corresponding to $s=0$, while retaining the
desirable feature of being unimodal (or quasi-concave), and allowing many
densities with tails heavier than the exponential tails characteristic of the
log-concave class.  In particular the classes ${\cal P}_{1,s} $ with $s \le
-1/2$ contain all the $t_{\nu}-$ densities with degrees of freedom $\nu \ge
1$.  Thus choice of an $s-$concave class ${\cal P}_s$ may be viewed as a
choice of how far to go in including heavy tailed densities.  For example,
choosing $s=1/2$ yields a class which includes all the $t_{\nu}-$densities
with $\nu \ge 1$ (and all the classes ${\cal P}_s$ with $s>-1/2$ since the
classes are nested), but not the $t_{\nu}-$densities for any $\nu \in (0,1)$.
Once a class ${\cal P}_s$ is fixed, it is known that the MLE over ${\cal
  P}_s$ exists (for sufficiently large sample size $n$) without any choice of
tuning parameters, and, as will be reviewed in
Theorem~\ref{thm:ConsistencySummary-all},
below, is consistent in several senses.
The choice of $s$ plays a role somewhat analogous to some index of smoothness,
$\alpha$ say, in more classical nonparametric
estimation based on smoothness assumptions:
smaller values of $s$ yield larger classes of densities, much as smaller values
of a smoothness index $\alpha$ yield larger classes of densities.
But for the shape constrained families ${\cal P}_s$, no bandwidth or
other tuning parameter is needed to define the estimator, whereas such tuning parameters
are typically needed for estimation in classes defined by smoothness conditions.
For further examples and motivations for the classes ${\cal P}_s$, see  \cite{MR0404559}
and \cite{MR1375234}.
  %
 %
%
%
%
%
%
%
%
 Heavy tailed data are quite common in many application areas including
  data arising from financial instruments (such as
  stock returns, commodity returns, and currency exchange rates),
  and measurements that arise from data
  networks (such as sizes of files being transmitted, file transmission
  rates, and durations of file transmissions) often empirically exhibit heavy
  tails.  Yet another setting where heavy-tailed data arise is in the
  purchasing of reinsurance: small insurance companies may themselves buy
  insurance from a larger company to cover possible extreme losses.  Assuming
  such losses to be heavy-tailed is natural since they are by definition
  extreme.
Two references
(of many) %
 providing discussion of these examples and of inference in
heavy-tailed settings are \cite{MR1652283} %
and \cite{MR2271424}. %

\subsection{Review of progress on the statistical side}

  Nonparametric estimation of log-concave and
  $s$-concave densities has developed rapidly in the last decade.  Here is a brief review
  of recent progress.
\subsubsection{Log-concave and $d=1$}
  For log-concave densities on $\RR$, \\
  \cite{MR2459192} %
  established existence of the Maximum Likelihood Estimator (MLE)
  $\widehat{p}_n$ of $p_0$, provided a method to compute it, and showed that
  it is Hellinger consistent: $H(\widehat{p}_n , p_0 ) \rightarrow_{a.s.} 0$
  where $H^2 (p,q) = (1/2) \int \{ \sqrt{p} - \sqrt{q} \}^2 dx$ is the
  (squared) Hellinger distance.
  \cite{MR2546798} %
  also discussed algorithms to compute $\widehat{p}_n$ and rates of
  convergence with respect to supremum metrics on compact subsets of the
  support of $p_0$ under H\"older smoothness assumptions on $p_0$.
  \cite{MR2509075} %
  established limit distribution theory for the MLE of a log-concave density
  at fixed points under various differentiability assumptions and
  investigated the natural mode estimator associated with the MLE.
\subsubsection{Log-concave and $d\ge 2$}
 Estimation of log-concave densities on $\RR^d$ with $d\ge2$ was initiated
 by \cite{MR2758237};   %
 they established existence and uniqueness and algorithms for computation.
\cite{MR2645484} %
proved consistency in weighted $L_1$ and appropriate supremum metrics, while
\cite{MR2816336,MR2838319}   %
investigated stability and robustness properties and use of the log-concave MLE
in regression problems.
Recently
\cite{Kim:2014wa}  %
 study upper and lower bounds for minimax risks based on Hellinger loss.
When specialized to $d=1$ and $s=0$ their results are consistent with (and somewhat
stronger than) the results we obtain here.
(See Section~\ref{sec:SummaryProblemsProspects} for further discussion.)

\subsubsection{$s-$concave and $d\ge 1$}
While the log-concave (or $0$-concave) case has received the most attention among the
$s$-concave classes, some progress has been made for other $s$-concave classes.
 \cite{MR2766867} showed that the MLE exists
  and is Hellinger consistent for the classes ${\cal P}_{d,s}$ with $s \in (-1/d, \infty)$.
  \cite{MR2722462} studied estimation over $s$-concave
  classes via estimators based on R\'enyi and other divergence criteria rather than
  maximum likelihood.   Consistency and stability results for these divergence
  estimator analogous  to those
  established by \cite{MR2816336} and \cite{MR2838319} for the MLE in the log-concave case have been
  investigated by \cite{Han-Wellner:2015}.

\subsection{What we do here}

  In this paper, we will focus on global rates of convergence of MLE's for the case $d=1$.
  We make this choice because of additional
  technical difficulties when $d>1$.  Although it has been conjectured
  that the $s$-concave MLE is Hellinger consistent at rate $n^{-2/5}$ in the
  one-dimensional cases (see e.g.\ \cite{MR2766867}, %
  pages 3778-3779),   \label{query:pagenumbers}  to the
  best of our knowledge this has not yet been proved (even though it follows for $s=0$
  and $d=1$ from the unpublished results of
  \cite{Doss-Wellner:2013} and \cite{Kim:2014wa}).

 The main difficulty in
  establishing global rates of convergence with respect to the Hellinger or
  other metrics has been to derive suitable bounds for the metric entropy
  with bracketing for appropriately large subclasses ${\cal P}$ of
  log-concave or $s$-concave densities.  We obtain bounds of the form
  \begin{eqnarray}
    \log N_{[\,]} (\epsilon , {\cal P} , H )
    \le K  \epsilon^{-1/2} ,  \ \ \epsilon > 0
    \label{BracketingEntropyWRTHellingerBound1}
  \end{eqnarray}
  where $N_{[\, ]} (\epsilon, {\cal P}, H) $ denotes the minimal number of
  $\epsilon-$brackets with respect to the Hellinger metric $H$ needed to
  cover ${\cal P}$.  We will establish such bounds in
  Section~\ref{sec:BracketEntropyBounds}
  using recent results of
  \cite{MR2519658} %
  (see also \cite{Gunt-Sen:12}) %
  for convex functions on $\RR$. %
  These recent results build on earlier work by \cite{MR0415155} %
  and \cite{MR876079}; see also \cite{MR1720712}, %
  pages 269-281.  The main difficulty has been that the bounds of
  \cite{MR0415155} involve restrictions on the Lipschitz behavior of the
  convex functions involved as well as bounds on the supremum norm of the
  functions.  The classes of log-concave functions to be considered must
  include the estimators $\widehat{p}_n$ (at least with arbitrarily high
  probability for large $n$).
  Since the
  estimators $\widehat{p}_n$ are discontinuous at
  the boundary of their support
  (which is contained in the support of the true density
  $p_0$), the supremum norm does not give control of the Lipschitz behavior
  of the estimators in neighborhoods of the %
  boundary  %
  of their support.
  \cite{MR2519658} showed how to get rid of the constraint on Lipschitz
  behavior when moving from metric entropy with respect to supremum norms to
  metric entropies with respect to $L_r$ norms.  Furthermore,
  \cite{Gunt-Sen:12} showed how to extend Dryanov's results from $\RR$ to
  $\RR^d$ and the particular domains $[0,1]^d$.
  Here we show how the results of \cite{MR2519658} and
  \cite{Gunt-Sen:12} can be strengthened from metric entropy with respect to
  $L_r$ to bracketing entropy with respect to $L_r$, and we carry these
  results over to the class of concave-transformed densities.
  Once bounds of the form (\ref{BracketingEntropyWRTHellingerBound1}) are available,
  then %
   tools from empirical process theory due %
   to
  \cite{MR1240719}, %
  \cite{MR1212164}, %
  \cite{MR1332570},  %
  and developed further in
  \cite{MR1739079} and \cite{MR1385671}, become available.

  The major results in this paper are developed for classes of densities,
  more general than the $s$-concave classes, which we call {\sl concave-transformed
  classes}.  (They will be rigorously defined later, see Section~\ref{sec:Appendix0}.)
  These are the classes studied in \cite{MR2766867}. %
  The main reason for this generality is that it does not complicate the
  proofs, and, in fact, actually makes the proofs easier to understand.  For
  instance, when $h(y) = e^y$, $h'(y)=h(y)$, but the proofs are more
  intuitively understood if one can tell the difference between
  $h'$ and $h$.  %
  Similarly, this generality allows us to keep track of the tail behavior and
  the peak behavior of the concave-transformed classes separately (via the
  parameters $\alpha$ and $\beta$, see
  page~\pageref{assum:transformation}). The tail behavior turns out to be
  relevant for global rates of convergence, as we see in this paper.

Here is an outline of the rest of our paper.
In Section~\ref{sec:MLEbasics} we define the MLE's for $s-$concave classes and briefly
review known properties of these estimators.  We also show that the MLE does not
exist for ${\cal P}_s$ for any $s< -1$.
In Section~\ref{sec:BracketEntropyBounds} we state our main rate
results for the MLE's over the classes ${\cal P}_{1,s}$ with $s>-1$.
In Section~\ref{sec:Appendix0} we state our main general rate results for $h-$transformed
concave classes.
Section~\ref{sec:SummaryProblemsProspects} gives a summary as well as further problems and prospects.
The proofs are given in Sections~\ref{sec:mainresults-proofs}  and ~\ref{sec:Appendix1}.

  \section{Maximum likelihood estimators:  basic properties}
  \label{sec:MLEbasics}
  \label{sec:BasicDefnsAndNotation} %
  We will restrict attention to the class of concave functions
  \begin{equation}
    \label{eq:defn:concavefunction}
    \mathcal{C}
    := \left\{
      \varphi: \RR \to [-\infty, \infty)
      \vert
      \varphi  \mbox{ is a closed, proper concave function}
    \right\},
  \end{equation}
  where \cite{MR0274683} %
  defines proper (page 24) and closed (page 52) convex functions.  A concave
  function is proper or closed if its negative is a proper or closed convex
  function, respectively.  Since we are focusing on the case $d=1$, we
  write ${\cal P}_s$ for ${\cal P}_{1,s}$; this can be written as
  \begin{equation}
    \label{eq:2}
    \mathcal{P}_{s} =  \lb p : \int p \,
    d\lambda =1 \rb \ \ \bigcap \ \ h_s \circ \mathcal{C}
  \end{equation}
  We also follow the convention that all concave functions $\varphi$ are
  defined on all of $\RR$ and take the value $-\infty$ off of their {\em
    effective domains}, $\dom \varphi := \{x : \varphi(x) > -\infty\}$.
  These conventions are motivated in \cite{MR0274683} (page
  40). %
  For any unimodal function $p$, we let $m_p$ denote the (smallest) mode of
  $p$.  %
  For two functions $f$ and $g$ and $r \ge 1$, we let $L_r(f,g) = \Vert f-g
  \Vert_r = \lp \int | f - g |^r d\lambda \rp^{1/r}.$
  We will make the following assumption.
  \begin{assumption}
    \label{assump:TrueDensityHypothesis-s}
    We assume that $X_{i}$, $i=1,\ldots, n$ are i.i.d.\ random variables
    having density $p_0 = h_s \circ \varphi_0 \in {\cal P}_{s}$ for $s \in
    \RR$.
  \end{assumption}
  Write $\PP_n = n^{-1}
  \sum_{i=1}^n \delta_{X_i}$ for the empirical measure of the $X_i$'s.  The
  maximum likelihood estimator $\widehat{p}_n = h_s(\widehat{\vp}_n)$ of
  $p_0$ maximizes
  \begin{equation*}
    \Psi_{n}(\vp) = \PP_n \log p = \PP_n (\log h_s) \circ \vp     %
  \end{equation*}
  over all
  functions $\vp \in \mathcal{C}$ for which $\int h_s(\vp) d\lambda = 1$.
  When $s > -1$, from \cite{MR2766867} %
  (Theorem 2.12, page 3757) we know that $\widehat{\varphi}_n$ exists if
  $n\ge \gamma/(\gamma-1)$
  with
  $\gamma \equiv -1/s> 1$ %
  in the case $s<0$,
  and if
  $n\ge 2$
  when $s\ge 0$.
  \cite{MR2766867}, page 3762,
  conjectured that $\widehat{\varphi}_n$ is unique when it exists.
  See also
  \cite{MR1941467},   %
  \cite{MR2459192} %
  and \cite{MR2546798} %
  (Theorem 2.1) for the $s=0$ case.

  The existence of the MLE has been shown only when $s > -1$.
  One might wonder if this is a deficiency in the proofs or is fundamental.  It is
  well-known that the MLE does not exist for the class of unimodal densities,
  $\mathcal{P}_{-\infty}$; see for example \cite{MR1447736}. The following
  proposition shows that in fact the MLE does not exist for $\mathcal{P}_s$
  when $s < -1$.  The case $ s = -1$ is still not resolved.

  \begin{proposition}
    \label{prop:MLEnonexistence}
    A maximum likelihood estimator does not exist
    for the class ${\cal P}_{s}$ for any $s < -1$.
  \end{proposition}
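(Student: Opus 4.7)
I would adapt the classical ``spike at a data point'' argument that shows non-existence of the unimodal MLE, but executed so that each element of the spiking sequence actually lies in $\mathcal{P}_s$ rather than just in $\mathcal{P}_{-\infty}$. Concretely, I will exhibit a sequence $p_\alpha\in\mathcal{P}_s$ with $\Psi_n(p_\alpha)\to+\infty$ as $\alpha\to 0^+$, which forces $\sup_{p\in\mathcal{P}_s}\Psi_n(p)=+\infty$ and rules out the existence of a maximizer. The hypothesis $s<-1$, equivalently $|s|>1$, is exactly what gives enough room inside $\mathcal{P}_s$ to concentrate unboundedly much mass near a single observation while keeping the tails integrable.

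\textbf{Construction.} Given $s<-1$ and sample size $n\ge 2$, pick any exponent $q$ in the (nonempty) interval $(|s|, n|s|/(n-1))$; note $q>|s|>1$, so $|x|^q$ is convex. Define $\psi_\alpha(x)=\alpha+|x-X_1|^q$ and $p_\alpha(x)=C_\alpha^{-1}\psi_\alpha(x)^{1/s}$ with $C_\alpha=\int\psi_\alpha^{1/s}\,d\lambda$. The scaling $x=X_1+\alpha^{1/q}u$ yields $C_\alpha=\alpha^{1/s+1/q}I$, where $I=\int_{\mathbb{R}}(1+|u|^q)^{1/s}\,du$ is finite precisely because $q>|s|$; rewriting $p_\alpha=h_s(-C_\alpha^{-s}\psi_\alpha)$ then confirms $p_\alpha\in\mathcal{P}_s$ since $-C_\alpha^{-s}\psi_\alpha\in\mathcal{C}$. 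Direct evaluation gives $\log p_\alpha(X_1)=-(1/q)\log\alpha-\log I$ and, for $X_i\ne X_1$, $\log p_\alpha(X_i)=-(1/s+1/q)\log\alpha+O(1)$ as $\alpha\to 0^+$, so $n\Psi_n(p_\alpha)=-(n/q+(n-1)/s)\log\alpha+O(1)$. The bound $q<n|s|/(n-1)$ is exactly equivalent to $n/q+(n-1)/s>0$, whence $\Psi_n(p_\alpha)\to+\infty$.

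\textbf{Main obstacle and side remarks.} The key subtlety, and where $s<-1$ is essentially used, is the compatibility of the three conditions on $q$: convexity ($q\ge 1$), tail integrability ($q>|s|$), and sufficient concentration at $X_1$ ($q<n|s|/(n-1)$). For $s<-1$ the constraint $q>|s|$ subsumes $q\ge 1$ and the interval $(|s|, n|s|/(n-1))$ is nonempty for every $n\ge 2$; for $s\in(-1,0)$ the convexity constraint $q\ge 1$ instead becomes binding, and the same construction breaks down precisely in the regime $n\ge \gamma/(\gamma-1)$ where the MLE is known to exist, so the argument has the expected sharpness. A secondary conceptual point is that $\mathcal{P}_s$ with $s<-1$ also contains densities with $p=+\infty$ at a single data point (take $\alpha=0$ with the peak placed at an observation and a convex tail ensuring integrability), at which $\Psi_n=+\infty$; but these degenerate candidates are non-unique, one for each data point, and do not define an MLE in any reasonable sense, so the diverging sequence above is the conclusive obstruction.
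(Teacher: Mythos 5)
Your proof is correct, but it takes a genuinely different route from the paper's. The paper exhibits the one-parameter family $p_a(x)=a(b_r-ax)^{-r}\,1_{[0,\,b_r/a]}(x)$ with $r=-1/s\in(0,1)$: a compactly supported $s$-concave density whose (integrable) singularity sits at the right endpoint $b_r/a$, and the log-likelihood $\ell_n(a)=\sum_i\{\log a-r\log(b_r-aX_i)\}\nearrow\infty$ as the endpoint is driven down to $X_{(n)}$. You instead use the full-support family $p_\alpha\propto(\alpha+|x-X_1|^q)^{1/s}$ with a spike in the interior at a data point, letting $\alpha\searrow 0$. Both arguments correctly verify membership in $\mathcal{P}_s$ (convexity of $\psi_\alpha$ from $q>|s|>1$; in the paper, linearity of $\varphi_a$) and both show $\sup\Psi_n=+\infty$. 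Your version buys a cleaner link to the classical unimodal-MLE non-existence ``spike'' argument, avoids the paper's implicit requirement that the support $[0,b_r/a]$ contain all the data (which would need a translation step for general data), and makes the role of the threshold $s=-1$ transparent via the three constraints on $q$ (convexity $q\ge 1$, tail integrability $q>|s|$, concentration $q<n|s|/(n-1)$), which shows why the construction necessarily fails in the regime where the MLE is known to exist. One small inaccuracy in your side remark: taking $\alpha=0$ in your specific $\psi_\alpha(x)=\alpha+|x-X_1|^q$ with $q>|s|$ gives $p_0\propto|x-X_1|^{q/s}$ with $q/s<-1$, which is \emph{not} integrable near $X_1$; to realize a genuine density in $\mathcal{P}_s$ with an integrable singularity at $X_1$ you would need a convex $\psi$ with a flatter power $q_1\in[1,|s|)$ near $X_1$ spliced to a steeper power $q_2>|s|$ in the tails (or simply observe that the paper's $p_a$ at $a=b_r/X_{(n)}$ is such a density). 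This does not affect the main argument, which is sound.
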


  Proposition~\ref{prop:MLEnonexistence} gives a negative result about the MLE for an
  $s$-concave density when $s < -1$.  When $s > -1$, there are many known
  positive results, some of which are summarized in the next theorem, which
  gives boundedness and consistency results.  In particular, we already know
  that the MLEs for $s$-concave densities are Hellinger consistent; our main
  Theorem~\ref{thm:HellingerConvergenceRate} extends this result to give the
  rate of convergence, when $s > -1$.

  Additionally, from lemmas and corollaries involved in the proof of
  Hellinger consistency, we know that on compact sets strictly contained in the
  support of $p_0$ we have uniform convergence, and we know that the
  $s$-concave MLE is uniformly bounded almost surely.  We will need these
  latter two results in the proof of the rate theorem to show we only need to
  control the bracketing entropy of an appropriate subclass of
  $\mathcal{P}_{s}$.
  \begin{theorem}[Consistency and boundedness of $\ \widehat p_n$ for $\mathcal{P}_{s}$]
    \label{thm:ConsistencySummary-all}
    Let Assumption~\ref{assump:TrueDensityHypothesis-s} hold with $s > -1$
    and let $\widehat p_n$ be the corresponding MLE.  Then
    \begin{enumerate}[label=(\roman*)]%
    \item \label{enum:Hellinger-consistency}
      $H(\widehat{p}_n , p_0 ) \rightarrow_{a.s.} 0$ as $n \to \infty$,
    \item \label{enum:compacta-consistency}
      If $S$ is a compact set strictly contained in the support of $p_0$,
      $$
      \sup_{x \in S} | \widehat{p}_n (x) - p_0 (x) | \rightarrow_{a.s.} 0
      \mbox { as } n \to \infty,
      $$
    \item \label{enum:a.s.-boundedness}
      $\limsup_{n\rightarrow \infty} \sup_x \widehat{p}_n (x) \le
      \sup_x p_0(x) \equiv M_0 
      < \infty$ almost surely.
    \end{enumerate}
  \end{theorem}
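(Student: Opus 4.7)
Parts (i)--(iii) can be assembled from the known consistency theory for $s$-concave MLEs together with classical convex-analysis tools. My plan is to dispatch (i) by citation and to obtain (ii) and (iii) by combining pieces of the consistency machinery with standard facts about concave functions.

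For (i): Hellinger consistency of the MLE over $\mathcal{P}_s$ for $s > -1$ is the $d = 1$ specialization of the Hellinger consistency theorem proved in \cite{MR2766867} for multivariate $s$-concave classes; in the log-concave case ($s = 0$) it reduces to the results of D\"umbgen--Rufibach and Pal--Woodroofe--Meyer. I would simply invoke these.

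For (ii), the key tool is the classical fact that a sequence of proper concave functions on $\RR$ converging pointwise on a dense subset of the interior of $\dom \vp_0$ to a concave $\vp_0$ must converge uniformly on compact subsets of $\mathrm{int}(\dom \vp_0)$ (see Rockafellar, Theorem~10.8). The standard Hellinger--$L_1$ inequality combined with (i) yields $\hat p_n \to p_0$ in $L_1$; passing to a subsequence gives a.s.\ pointwise convergence $p_0$-a.e., and applying the continuous inverse $h_s^{-1}$ transfers this to $\hat\vp_n \to \vp_0$ on a dense subset of $\mathrm{int}(\dom \vp_0) = \mathrm{int}(\mathrm{supp}\, p_0)$. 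The concavity lemma then promotes this to uniform convergence of $\hat\vp_n$ on any compact $S \subset \mathrm{int}(\mathrm{supp}\, p_0)$, and continuity of $h_s$ transfers it to uniform convergence of $\hat p_n \to p_0$ on $S$. Uniqueness of the limit promotes the subsequential statement to full-sequence a.s.\ convergence.

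For (iii), the main obstacle is excluding mass concentration of $\hat p_n$. Writing $L_n := \sup_x \hat p_n(x) = \hat p_n(\hat m_n)$ with $\hat m_n$ a mode of $\hat p_n$ (existing by unimodality and lying in $[X_{(1)},X_{(n)}]$), suppose for contradiction that, along a subsequence, $L_n \to L^\ast > m_{p_0}$ (possibly $L^\ast = \infty$). Unimodality makes the super-level set $A_n := \{\hat p_n \ge (L^\ast + m_{p_0})/2\}$ an interval around $\hat m_n$, and the normalization $\int \hat p_n\, d\lambda = 1$ forces $|A_n| \le 2/L^\ast$. Two cases arise. If $|A_n|$ is bounded away from zero along a further subsequence, then the localization of $\hat m_n$ implied by the Hellinger-induced weak convergence lands the limit of $A_n$ inside a compact subset of $\mathrm{int}(\mathrm{supp}\, p_0)$, and (ii) gives $p_0 \ge (L^\ast + m_{p_0})/2 > m_{p_0}$ on that interval, contradicting the definition of $m_{p_0}$. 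If instead $|A_n| \to 0$, the mass carried by the vanishing interval $A_n$ produces a ``spike'' incompatible with weak convergence $\hat p_n \Rightarrow p_0$ to the continuous density $p_0$, which itself follows from Hellinger convergence. Either alternative yields a contradiction, so $\limsup_n \sup_x \hat p_n(x) \le m_{p_0}$ almost surely.
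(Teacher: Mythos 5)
The paper itself does not prove this theorem; its ``proof'' is purely a collection of citations --- part~(i) to D\"umbgen--Rufibach ($s=0$) and Theorem~2.17 of Seregin--Wellner ($s>-1$), part~(ii) to D\"umbgen--Rufibach and Theorem~2.18 of Seregin--Wellner, and part~(iii) to Theorem~3.2 of D\"umbgen--Rufibach and Lemma~3.17 of Seregin--Wellner. So your proposal to actually reconstruct proofs of (ii) and (iii) goes well beyond what the paper records. Your sketch for (ii) is essentially the standard argument that underlies the cited results: Hellinger $\Rightarrow L_1 \Rightarrow$ a.e.\ pointwise convergence along a subsequence, push through the continuous $h_s^{-1}$ on $\{p_0 > 0\}$, invoke Rockafellar Theorem~10.8 to upgrade dense pointwise to uniform-on-compacts convergence of the concave functions, push back through $h_s$, and use uniqueness of the limit to pass from subsequences to the full sequence. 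That is sound (modulo the standard housekeeping about $\widehat\vp_n$ being finite on $S$ eventually, which you would want to note).

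Your sketch for (iii), however, has a genuine gap in Case~2. You argue that if the super-level set $A_n := \{\widehat p_n \ge (L^\ast+m_{p_0})/2\}$ has width tending to $0$, then ``the mass carried by the vanishing interval produces a spike incompatible with weak convergence to the continuous density $p_0$.'' This is false: a spike of vanishing width carries mass at most $L_n\,|A_n|$, which goes to $0$ when $L^\ast < \infty$, and a spike whose mass vanishes is perfectly compatible with weak (and $L_1$) convergence to a continuous limit. So Case~2 produces no contradiction from weak convergence alone; you would need an explicit unbounded sub-case ($L^\ast = \infty$) analysis as well, and even there the mass in $A_n$ need not be bounded away from zero. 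The reason the cited results are nontrivial is precisely that excluding this vanishing-spike behaviour requires something beyond metric convergence --- either the concavity structure (e.g., if $\widehat\vp_n(x_1)$ and $\widehat\vp_n(x_2)$ stay bounded for two interior points flanking $\widehat m_n$, then a blow-up of $\widehat\vp_n(\widehat m_n)$ forces the integral of $h_s\circ\widehat\vp_n$ over $(x_1,x_2)$ to diverge by the ``above the chord'' inequality, contradicting $\int\widehat p_n = 1$) or a direct comparison of log-likelihoods exploiting that $\widehat p_n$ is a maximizer. Your argument as written uses neither, so Case~2 does not close, and the dichotomy does not yield the claim. If you wish to give a self-contained proof of (iii), replace the ``spike vs.\ weak convergence'' step with one of these concavity/likelihood arguments, being careful about the sub-case where $\widehat m_n$ drifts to the boundary of $\operatorname{supp}p_0$.
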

  \begin{proof}

    The first statement \ref{enum:Hellinger-consistency} is proved by
    \cite{MR2459192} for $s=0$, and for
    $s> -1$
    in Theorem 2.17 of \cite{MR2766867}. %
    Statement~\ref{enum:compacta-consistency} for $s=0$ is a
    corollary of Theorem 4.1 of \cite{MR2546798}, and for %
    $s > -1$ follows from Theorem 2.18 of \cite{MR2766867}. %
    Statement~\ref{enum:a.s.-boundedness} is Theorem 3.2 of
    \cite{MR2459192} for $s=0,$ %
    and is Lemma 3.17 in \cite{MR2766867}
    for %
    $s > -1$.
  \end{proof}

  In order to find the  Hellinger rate of convergence of the MLEs, we will bound the
  bracketing entropy of classes of $s$-concave densities.
  In general, by using known consistency results, one does not need to bound
  the bracketing entropy of the entire function class being considered, but
  rather of a smaller subclass in which the MLE is known to lie with high
  probability.  This is the approach we will take, by using parts
  \ref{enum:compacta-consistency} and \ref{enum:a.s.-boundedness} of
  Theorem~\ref{thm:ConsistencySummary-all}.  %
  We therefore consider the following subclasses %
  $\Pdms$ of $s$-concave densities which (we show in the proof of
  Theorem~\ref{thm:HellingerConvergenceRate}) for some $M < \infty$ will
  contain both $p_0$ and
 $\widehat{p}_n$, after translation and rescaling,
  with high probability for large $n$.
 (Recall, the Hellinger distance is invariant
  under translations and rescalings.)  For $0 < M < \infty$, let
  \begin{equation}
    \label{eq:DefnOfCalPMOne}
    \Pdms \equiv
    \lb p \in \mathcal{P}_{s} \colon
    \sup_{x \in \RR} p(x) \le M, 1/M \le p(x) \mbox{ for all } \vert x \vert \le 1 \rb.
  \end{equation}
  The next proposition gives an envelope for the class $\Pdms$.  This
  envelope is an important part of the proof of the bracketing entropy of the
  class $\Pdms$.
  \begin{proposition}
    \label{prop:calPMdSEnvelope}
    Fix $0 < M < \infty$ and  $s > -1$.  Then there exists a constant
    $0 < L < \infty$ depending only on $s$ and $M$ such that  for any $p \in \Pdms$
    \begin{eqnarray}
      p(x)
      & \le & \left \{ \begin{array}{l l}
          \left ( M^s  +  \frac{L}{2M} |x|  \right )^{1/s}, & |x| \ge 2M+1, \\
          M , & |x| < 2M+1. %
        \end{array} \right \}       \label{eq:DefnOfCalPMdSEnvelope}
    \end{eqnarray}
  \end{proposition}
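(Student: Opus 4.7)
The bound $p(x) \le M$ for $|x| < 2M+1$ is immediate from the defining property $\sup p \le M$. For $|x| \ge 2M+1$ we may assume $x \ge 2M+1$ by symmetry. When $s > 0$ the envelope $(M^s + L|x|/(2M))^{1/s}$ exceeds $M$ for every $L > 0$, so the bound is automatic. When $s = 0$ an analogous argument using $h_0 = \exp$ produces an exponential envelope $M e^{-C|x|}$, understood as the $s \to 0$ limit of the stated formula. The substantive case is $-1 < s < 0$, on which I focus.

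Set $\psi := -\varphi$, which is convex and positive; the hypotheses read $\psi \ge M^s$ on $\RR$, $\psi(y) \le M^{-s}$ on $[-1, 1]$, and $\int \psi^{1/s}\,d\lambda = 1$ (finite because $s > -1$). The combination $\int p = 1$ and $p \le M$ forces $|\{p \ge 1/M\}| \le M$, so by unimodality the mode of $p$ lies in $[-(M-1), M-1]$; hence $\psi$ is increasing on $[1, x]$ for $x \ge 2M+1$. Write $A := \psi(1) \in [M^s, M^{-s}]$ and $B := \psi(x) \ge A$. By convexity $\psi(t) \le \ell(t) := A + (B-A)(t-1)/(x-1)$ on $[1,x]$, and since $u \mapsto u^{1/s}$ is decreasing for $s < 0$, $p(t) \ge \ell(t)^{1/s}$. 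Substituting $u = \ell(t)$, the normalization $\int_1^x p \le 1$ yields $1 \ge \frac{x-1}{B-A} \int_A^B u^{1/s}\,du$, which rearranges to the implicit inequality
\[
  (x-1)\bigl(A^{(s+1)/s} - B^{(s+1)/s}\bigr) \le \frac{s+1}{-s}(B - A).
\]

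I then split into two regimes with threshold $c := 2^{|s|} > 1$. If $B \ge cA$, then $A^{(s+1)/s} - B^{(s+1)/s} \ge A^{(s+1)/s}(1 - 2^{-s-1})$; combined with $A^{(s+1)/s} \ge M^{-(s+1)}$ (from $A \le M^{-s}$), the inequality above yields $B \ge M^s + C(s, M)(x-1)$ for an explicit $C(s, M) > 0$, and using $x-1 \ge x/2$ and setting $L := M\,C(s, M)$ gives the claimed envelope. If instead $B < cA$, then $\psi \le B$ on $[1, x]$ forces $p \ge B^{1/s}$ there, and $(x-1)B^{1/s} \le 1$ combined with $B < cM^{-s} = (2M)^{|s|}$ gives $x - 1 < 2M$, incompatible with $x > 2M+1$. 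The boundary $x = 2M+1$ itself is handled directly: $p(2M+1) < 1/M$ by the level-set bound above, while $(M^s + L(2M+1)/(2M))^{1/s} \ge 1/M$ holds for $L$ sufficiently small, which is possible thanks to the strict separation $M^{-s} > M^s$.

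The main obstacle is calibrating a single $L = L(s, M)$ so that the envelope controls $p$ simultaneously at the boundary $|x| = 2M+1$ and in the decaying tail. The two-regime split above decouples these requirements, and the final $L$ is taken as the minimum of the constants produced by the $B \ge cA$ estimate and the boundary check.
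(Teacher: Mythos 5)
Your argument is correct in substance but follows a genuinely different route from the paper's. The paper reduces Proposition~\ref{prop:calPMdSEnvelope} to the general $h$-concave envelope bound (Proposition~\ref{prop:calPMOnehEnvelope}), whose engine is Proposition~\ref{propGeneralHconcaveUpperBounds}: the inequality $f(x) \le h\bigl(\varphi(x_0) - h(\varphi(x_1))\,\tfrac{\varphi(x_0)-\varphi(x_1)}{F(x)-F(x_0)}(x-x_0)\bigr)$, obtained by integrating the chord bound for $\varphi$ against $h$, restricting the range of integration, and then using $F(x)-F(x_0)\le 1$. Your proof instead works directly with $\psi=-\varphi$ and the explicit form $h_s^{-1}(u)=-u^s$: you integrate the chord bound for $\psi$ in closed form over $[1,x]$, which produces the implicit inequality relating $A=\psi(1)$ and $B=\psi(x)$, and then extract linear growth of $\psi$ via the two-regime split on $B/A$. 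The paper's route buys modularity (one lemma serves every transform $h$ satisfying Assumption~\ref{assum:enum:y0-infinite}), while your route is more elementary for $s$-concave densities and yields the constant $L=M\cdot\tfrac{-s}{s+1}(1-2^{-s-1})M^{-(s+1)}$ in fully explicit form, which the paper leaves implicit. Your dispatch of $s>0$ (the envelope automatically exceeds $M$) is a clean observation not used in the paper, which handles that case separately via Corollary~\ref{cor:SConcavePositiveConnectCor}.

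One slip worth fixing: locating the mode of $p$ in $[-(M-1),M-1]$ does not by itself give that ``$\psi$ is increasing on $[1,x]$'' — for $M>2$ the mode can lie in $(1,M-1]$, so $\psi$ may initially decrease. What you actually need, and what is true, is $B>A$ for every $x\ge 2M+1$; this follows directly from your own level-set bound: $\{p\ge 1/M\}$ is an interval of length at most $M$ containing $[-1,1]$, hence contained in $[-(M-1),M-1]$, so $p(x)<1/M$ for $x\ge 2M+1>M-1$, and therefore $\psi(x)>M^{-s}\ge\psi(1)$. That substitution makes the change of variables $u=\ell(t)$ legitimate and also shows that the ``$\psi\le B$'' step in your Case~2 is justified (since $B\ge A$ forces $\psi\le\max(A,B)=B$ under the chord bound). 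With that repair the proof goes through.
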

  \begin{proof}
    A corresponding statement for the more general $h$-transformed density
    classes is given in Proposition~\ref{prop:calPMOnehEnvelope} in the
    appendix. However, \eqref{eq:DefnOfCalPMdSEnvelope} does not immediately
    follow from the statement of Proposition~\ref{prop:calPMOnehEnvelope}
    applied to $h \equiv h_s(y) = ( -y)_+^{1/s}$,
    since the requirement $ \alpha > -1/s$ disallows the case $\alpha =
    -1/s$, which is what we need.  However, \eqref{eq:h-simple}
    from the proof of
    Proposition~\ref{prop:calPMOnehEnvelope}
    with $h^{-1}_s(y) = -y^s $ for $y \in
    (0, \infty)$, yields
    \begin{equation*}
      p(x) \le  h_s\lp -M^s - \frac{L}{2M} |x| \rp
    \end{equation*}
    for $|x| \ge 2M+1$, which gives us \eqref{eq:DefnOfCalPMdSEnvelope}.
  \end{proof}

  \section{Main Results: log-concave and $s$-concave classes}
  \label{sec:BracketEntropyBounds}

  Our main goal is to establish rates of convergence for the Hellinger
  consistency given in \ref{enum:Hellinger-consistency} of
  Theorem~\ref{thm:ConsistencySummary-all} for the $s$-concave MLE.
  As mentioned earlier, the key step towards proving rate results of this
  type is to bound the size, in terms of bracketing entropy, of the function
  class over which we are estimating.  Thus we have two main results in this
  section.  In the first we bound the bracketing entropy of certain
  $s$-concave classes of functions.  This shows that for appropriate values
  of $s$, the transformed classes have the same relevant metric structure as
  (compact) classes of concave functions.  Next, using the bracketing bound,
  our next main result gives the rates of convergence of the $s$-concave
  MLEs.

  Now let the bracketing entropy of a class of functions ${\cal F}$ with
  respect to a semi-metric $d$ on ${\cal F}$ be defined in the usual way; see
  e.g.  \cite{MR1720712} page 234, %
  \cite{MR1385671}, page 83, %
  or \cite{MR1739079}, page 16.  %
  The $L_r$-size of the brackets depends on the relationship of $s$ and $r.$
  In particular, for our results, we need to have light enough tails,
  which is to say we need $-1/s$ to be large enough.
  Our main results are as follows:
  \begin{theorem}
    \label{thm:BracketEntropyBound}
    Let $r \ge 1$ and $M > 0$.  Assume that either $s \ge 0 $ or that
    $\gamma \equiv -1/s > 2/r$.
    Then
    \begin{equation}
      \label{eq:bracketF}
      \log N_{[\, ]}(\epsilon, \mathcal{P}_{M, s}^{1/2}, L_r)
      \lesssim \epsilon^{- 1/2},
    \end{equation}
    where the constants in $\lesssim$ depend only on $r$, $M$, and $s$.
    By taking $r=2$ and $s > -1$ we have that
    \begin{equation*}
      \log N_{[\, ]}(\epsilon, \mathcal{P}_{M, s}, H)
      \lesssim
      \epsilon^{-1/2}.
    \end{equation*}
  \end{theorem}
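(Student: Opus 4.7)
The plan is to reduce the global bracketing-entropy problem to one on a compact interval, using the envelope of Proposition~\ref{prop:calPMdSEnvelope} to absorb the tails into a single bracket, and then to invoke a bracketing analogue of the metric-entropy bounds for concave functions on compact intervals cited in the introduction. I would expect this compact-domain bracketing bound to be established separately in Sections~\ref{sec:Appendix0}--\ref{sec:Appendix1}, since the authors advertise the strengthening from metric entropy to bracketing entropy as one of the paper's contributions.

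Concretely, I would fix $\epsilon > 0$ and split $\RR$ into $[-T,T]$ and $\{|x|>T\}$, with $T = T(\epsilon, r, s, M)$ to be chosen. On the tails, every $p \in \Pms$ is sandwiched between $0$ and the envelope $e(x) = (M^s + L|x|/(2M))^{1/s}$ of Proposition~\ref{prop:calPMdSEnvelope}, so $p^{1/2}$ lies in the single bracket $[0, e^{1/2}]$ there. Its $L_r$-size is $\bigl(2 \int_T^\infty e(x)^{r/2}\, dx\bigr)^{1/r}$. For $s < 0$ the envelope decays like $|x|^{1/s}$, so $e^{r/2}$ is integrable at infinity iff $r/(2s) < -1$, i.e.\ $\gamma = -1/s > 2/r$, which is precisely the stated hypothesis; for $s \ge 0$ the envelope vanishes outside a bounded set. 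Hence $T$ may be chosen (depending only on $\epsilon, r, s, M$) so that the tail bracket has $L_r$-size at most $\epsilon/2$.

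On $[-T,T]$ the class $\{p^{1/2}|_{[-T,T]} : p \in \Pms\}$ consists of functions of the form $(h_s \circ \varphi)^{1/2}$ with $\varphi$ closed, proper, concave, and forced by the envelope into a bounded range. The bracketing analogue of the cited concave-function entropy bound should then yield
\begin{equation*}
\log N_{[\,]}\bigl(\delta,\ \{p^{1/2}|_{[-T,T]} : p \in \Pms\},\ L_r([-T,T])\bigr) \lesssim \delta^{-1/2},
\end{equation*}
with constants depending on $T, M, s, r$ but not on $\delta$. Taking $\delta = \epsilon/2$ and concatenating each compact-interval bracket with the single tail bracket yields a valid $\epsilon$-bracket on $\RR$, with bracket count $\exp(K \epsilon^{-1/2})$. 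The Hellinger statement follows immediately by taking $r = 2$: the requirement $-1/s > 1$ is equivalent to $s > -1$ (with $s \ge 0$ automatic), and Hellinger distance equals $L_2$ distance between half-densities up to the factor $\sqrt 2$.

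The hard part will be the compact-domain bracketing bound in the displayed equation above. Promoting a metric-entropy bound for concave functions to a bracketing bound requires constructing explicit pointwise upper and lower envelopes, not merely $L_r$-approximations, while preserving the concavity constraint. A secondary subtlety is transferring from bare concave $\varphi$ to the transformed half-density $(h_s \circ \varphi)^{1/2}$: monotonicity of $h_s$ makes order-preserving brackets on $\varphi$ push forward to brackets on $(h_s \circ \varphi)^{1/2}$, but one must verify that the $L_r$-size is controlled by the local Lipschitz or H\"older behavior of $y \mapsto h_s(y)^{1/2}$ on the bounded range of $\varphi$-values forced by the envelope.
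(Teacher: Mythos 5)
Your envelope step and your identification of the compact-interval bracketing bound (this is Theorem~\ref{thm:entropy_compactx} in the paper) are on target, and you correctly read off the integrability condition $\gamma = -1/s > 2/r$ from the envelope's tail. But the two-piece decomposition --- one compact window $[-T,T]$ plus a single tail bracket $[0,e^{1/2}]$ on $\{|x|>T\}$ --- does \emph{not} give the claimed rate when $s<0$, precisely because of the $T$-dependence of the constant that you yourself flag and then drop.

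Concretely, the compact-interval bound scales as
\begin{equation*}
\log N_{[\,]}\bigl(\delta,\ \cdot\ , L_r([-T,T])\bigr) \lesssim \bigl(B\,(2T)^{1/r}\bigr)^{1/2}\,\delta^{-1/2},
\end{equation*}
so the constant grows like $T^{1/(2r)}$. Meanwhile the envelope decays like $e(x)\asymp |x|^{1/s} = |x|^{-\gamma}$, so the single tail bracket has $L_r$-size of order $T^{1/r - \gamma/2}$ and you are forced to take $T \gtrsim \epsilon^{-1/(\gamma/2 - 1/r)}$. Combining the two, the overall bound you would obtain is $\lesssim \epsilon^{-1/2 - 1/(r\gamma - 2)}$, which strictly overshoots $\epsilon^{-1/2}$ for every $s\in(-1,0)$ and degrades without limit as $\gamma\searrow 2/r$. (For $s\ge 0$ the envelope has compact support, $T$ may be fixed independently of $\epsilon$, and your scheme works.)

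The paper's proof of Theorem~\ref{thm:entropy_noncompactx_RRd} (of which the present statement is the specialization $h=h_s$) avoids this by a multi-scale partition of $\RR$ into countably many intervals $I_i \approx [i^\gamma,(i+1)^\gamma]$ of polynomially growing width, applying Theorem~\ref{thm:entropy_compactx} on each $I_i$ with bracket size $\epsilon a_i$, where $a_i = A_i^\beta$ and $A_i$ is the envelope's local sup-norm on $I_i$ times the width of $I_i$ to the power $1/r$. Choosing $\beta = 1/(2r+1)$ makes the total $L_r^r$-size $\epsilon^r \sum_i a_i^r$ and the total log-cardinality $\epsilon^{-1/2}\sum_i (A_i/a_i)^{1/2}$ reduce to the \emph{same} convergent series $\sum_i A_i^{r/(2r+1)}$, which is then bounded using $\gamma > 2/r$. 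The envelope's decay must be exploited on every scale, not only once to cut the tail; a single $T$-cut is too coarse, and the missing ingredient in your proposal is this simultaneous refinement in $\epsilon$ and in $|x|$.
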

  \smallskip

  Theorem~\ref{thm:BracketEntropyBound}
  is the main tool we need to obtain rates of convergence for the MLEs $\widehat{p}_n$.
  This is given in our second main theorem:
  \begin{theorem}
    \label{thm:HellingerConvergenceRate}
    Let Assumption \ref{assump:TrueDensityHypothesis-s} hold, and let $s >
    -1$. %
    Suppose that $\widehat{p}_{n,s}$ is the MLE of the $s$-concave density $p_0$.
    Then
    \begin{equation*}
      H(\widehat{p}_{n,s} , p_0) =
      O_p(n^{-2/5}).
    \end{equation*}
  \end{theorem}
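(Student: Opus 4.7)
\medskip

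The plan is to combine the bracketing entropy bound from Theorem~\ref{thm:BracketEntropyBound} with the standard rate theorem for maximum likelihood estimators (for instance Theorem~3.4.4 of \cite{MR1385671} or Theorem~7.4 of \cite{MR1739079}), after first localizing the problem to the subclass $\mathcal{P}_{M,s}$ for a suitable finite $M$.

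\textbf{Step 1: Localization.} The bracketing entropy bound in Theorem~\ref{thm:BracketEntropyBound} applies to $\mathcal{P}_{M,s}$, not to the full class $\mathcal{P}_s$, so I first need to argue that with probability tending to one, both $p_0$ and $\widehat{p}_n$ belong to $\mathcal{P}_{M,s}$ for some fixed $M$. Without loss of generality we may assume (after translating the data) that the origin lies in the interior of the support of $p_0$, so that $[-1,1]$ is strictly contained in that support; in particular $\inf_{|x|\le 1} p_0(x) > 0$. Part~\ref{enum:a.s.-boundedness} of Theorem~\ref{thm:ConsistencySummary-all} yields $\limsup_n \sup_x \widehat{p}_n(x)\le m_{p_0}$ almost surely, and part~\ref{enum:compacta-consistency} applied to the compact set $S=[-1,1]$ gives uniform convergence $\widehat{p}_n\to p_0$ on $[-1,1]$. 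Choosing $M$ strictly larger than $\max(m_{p_0}, 1/\inf_{|x|\le 1}p_0(x))$ then ensures that, on an event of probability tending to one, the MLE lies in $\mathcal{P}_{M,s}$, and likewise $p_0 \in \mathcal{P}_{M,s}$.

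\textbf{Step 2: Bracketing integral and rate equation.} By Theorem~\ref{thm:BracketEntropyBound} with $r=2$, the class $\mathcal{P}_{M,s}$ satisfies $\log N_{[\,]}(\epsilon,\mathcal{P}_{M,s},H)\lesssim \epsilon^{-1/2}$, so the bracketing integral satisfies
\begin{equation*}
J_{[\,]}(\delta,\mathcal{P}_{M,s},H)
\;=\;\int_0^{\delta}\sqrt{1+\log N_{[\,]}(\epsilon,\mathcal{P}_{M,s},H)}\,d\epsilon
\;\lesssim\; \int_0^{\delta}\epsilon^{-1/4}\,d\epsilon
\;\lesssim\; \delta^{3/4}.
\end{equation*}
The standard MLE rate theorem says that $H(\widehat{p}_n,p_0)=O_p(r_n^{-1})$ for any sequence $r_n$ satisfying $r_n^2\phi_n(1/r_n)\le \sqrt n$, where $\phi_n(\delta)$ dominates the bracketing integral on the shell $\{p:H(p,p_0)\le \delta\}$. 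Taking $\phi_n(\delta)=C\delta^{3/4}$, the rate equation becomes $r_n^2\,r_n^{-3/4}\lesssim \sqrt n$, i.e.\ $r_n^{5/4}\lesssim n^{1/2}$, so one may take $r_n = n^{2/5}$. Equivalently, with $\alpha=1/2$ in the formula $n^{-1/(2+\alpha)}$ from Theorem~3.4.4 of \cite{MR1385671}, one obtains the rate $n^{-2/5}$.

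\textbf{Step 3: Combining.} Conditioning on the localization event from Step~1 (which has probability tending to one) and applying the rate theorem to the likelihood problem restricted to $\mathcal{P}_{M,s}$ (for which the MLE of the restricted problem coincides with $\widehat{p}_n$ on that event) then yields $H(\widehat{p}_{n,s},p_0)=O_p(n^{-2/5})$.

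I expect the main obstacle to be a careful verification of the technical hypotheses of the rate theorem in the presence of the random localization, namely that the MLE over the full class $\mathcal{P}_s$ can legitimately be analyzed as an approximate maximizer over the (random but eventually fixed) localized class $\mathcal{P}_{M,s}$. Once this routine but delicate bookkeeping is done, the $n^{-2/5}$ rate falls out mechanically from the $\epsilon^{-1/2}$ bracketing entropy bound.
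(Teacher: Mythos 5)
Your proposal takes essentially the same route as the paper's (which proves the general $h$-transformed result Theorem~\ref{thm:HellingerRateTheoremFinal} and then specializes): localize to $\mathcal{P}_{M,s}$ via the consistency/boundedness results of Theorem~\ref{thm:ConsistencySummary-all}, apply the bracketing entropy bound, feed into a van de Geer/van der Vaart--Wellner rate theorem, and solve the rate equation. Steps~1 and~3 and the rate computation in Step~2 match the paper's argument.

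There is, however, one genuine gap in Step~2. The MLE rate theorems you invoke (Theorem~3.4.4 of \cite{MR1385671}, Theorem~7.4 of \cite{MR1739079}) do not hypothesize control of the bracketing entropy of the density class itself; they require control of the bracketing entropy (in Hellinger, or equivalently $L_2$ of the square roots) of the ``convexified'' class
\begin{equation*}
\overline{\mathcal{P}}(\delta) = \left\{ (p+p_0)/2 : p \in \mathcal{P},\ H\bigl((p+p_0)/2, p_0\bigr) < \delta \right\},
\end{equation*}
because it is the log-likelihood of $(p+p_0)/(2p_0)$, not of $p/p_0$, that is bounded and amenable to the basic inequality. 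Your argument tacitly substitutes the entropy of $\mathcal{P}_{M,s}$ for that of $\overline{\mathcal{P}}(\delta)$ without justification; these are a priori different classes. The paper closes precisely this gap via Lemma~\ref{lemUniformOnCompacts}, which shows
$N_{[\,]}(\epsilon, \overline{\mathcal{P}}_h(\delta), H) \lesssim N_{[\,]}(\epsilon, \mathcal{P}_h(4\delta), H) < N_{[\,]}(\epsilon, \mathcal{P}_{M,h}, H)$
for $\delta$ small enough, using the halving inequality $H(p,p_0) < 4 H(\overline{p},p_0)$ and the fact that Hellinger convergence of $h$-concave densities implies uniform convergence on compacta. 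Without some version of this bridge, Step~2 does not follow from the theorems you cite; with it, your proof is complete and coincides with the paper's.
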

  \smallskip

  Theorem~\ref{thm:HellingerConvergenceRate}
  is a fairly straightforward consequence of Theorem~\ref{thm:BracketEntropyBound} by
  applying \cite{MR1739079}, Theorem 7.4, page 99, or
  \cite{MR1385671}, Theorem 3.4.4 in conjunction with Theorem 3.4.1, pages
  322-323.
  \label{note:riskparagraph}

  In the case $s=0$, one can extend our results (an upper bound on the
  rate of convergence) to an upper bound on the risk
  $E_{p_0}(H^2(\widehat{p}_{n,0}, p_0) )$ over the entire class of log-concave densities
  $p_0$; %
  \cite{Kim:2014wa} show how this can be done; they use the fact that the
  log-concave density class is compact in the sense that one can translate
  and rescale to have e.g.\ any fixed mean and covariance matrix one would
  like (since the Hellinger metric is invariant under translation and
  rescaling), and the class of densities with fixed mean and variance is
  uniformly bounded above.  However, to show the risk bound for $s=0$,
  \cite{Kim:2014wa} use many convergence results that are available for
  $0$-concave densities but not yet available for $s$-concave densities with
  $s< 0$.
  In particular, their crucial Lemma 11, page 33, relies on results concerning the asymptotic
behavior of the MLE beyond the log-concave model
${\cal P}_0$ due to D\"umbgen, Samworth, and Schumacher (2011).
We do not yet know if such a result holds for the MLE in any of the classes
${\cal P}_s$ with $s<0$.
  Thus, for the moment, we leave our results as rates of convergence
  rather than risk bounds.

  In addition to Theorem~\ref{thm:HellingerConvergenceRate}, we have further
  consequences since the Hellinger metric dominates the total variation or
  $L_1-$metric and via \cite{MR1739079}, Corollary 7.5, page 100:

  \begin{corollary}
    \label{cor:L1convergenceRate}
    Let Assumption \ref{assump:TrueDensityHypothesis-s} holds and let $ s >
    -1$.  %
    Suppose that $\widehat{p}_{n,s}$ is the MLE of the $s$-concave density
    $p_0$.  Then
    \begin{equation*}
      \int_\RR | \widehat{p}_{n,s} (x) - p_0(x) | \, dx =
      O_p(n^{-2/5}).
    \end{equation*}
  \end{corollary}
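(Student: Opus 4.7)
The plan is to deduce the corollary from Theorem~\ref{thm:HellingerConvergenceRate} by the standard inequality relating the $L_1$ distance between densities to the Hellinger distance, so essentially no new probabilistic content is needed beyond what has already been established.

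Concretely, the first (and only substantive) step is to record the pointwise factorization $|p-q| = |\sqrt{p}-\sqrt{q}|\cdot(\sqrt{p}+\sqrt{q})$ for two densities $p$ and $q$, apply Cauchy--Schwarz, and compute
\begin{equation*}
  \int_\RR (\sqrt{p}+\sqrt{q})^{2}\, d\lambda = 2 + 2\int_\RR \sqrt{pq}\, d\lambda \le 4,
\end{equation*}
together with $\int_\RR (\sqrt{p}-\sqrt{q})^{2}\, d\lambda = 2 H^{2}(p,q)$. Combining these gives the standard bound
\begin{equation*}
  \int_\RR |p-q|\, d\lambda \le 2\sqrt{2}\, H(p,q),
\end{equation*}
which is the inequality cited via \cite{MR1739079}, Corollary 7.5. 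No measurability or integrability subtleties arise since $\widehat p_{n,s}$ and $p_0$ are bona fide densities in $\mc{P}_s$.

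The second step is to plug in $p = \widehat p_{n,s}$ and $q = p_0$ and apply Theorem~\ref{thm:HellingerConvergenceRate}, yielding
\begin{equation*}
  \int_\RR |\widehat p_{n,s}(x) - p_0(x)|\, dx \le 2\sqrt{2}\, H(\widehat p_{n,s}, p_0) = O_p(n^{-2/5}),
\end{equation*}
which is the desired conclusion. There is no real obstacle here: the entire argument rides on Theorem~\ref{thm:HellingerConvergenceRate}, and the $L_1$-to-Hellinger inequality is distribution-free, so the conclusion is immediate and uniform across the range $s > -1$ for which the MLE exists and has the Hellinger rate.
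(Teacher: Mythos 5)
Your proof is correct and takes the same route the paper does: the paper cites the Hellinger-dominates-$L_1$ inequality (via van de Geer, Corollary 7.5) and then invokes Theorem~\ref{thm:HellingerConvergenceRate}, and your Cauchy--Schwarz derivation is exactly the standard proof of that inequality with the paper's normalization $H^2(p,q)=\tfrac12\int(\sqrt p-\sqrt q)^2\,d\lambda$. The constant $2\sqrt2$ and the final application are both right, so nothing is missing.
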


  \begin{corollary}
    \label{cor:LogLRconvergenceRate}
    Let Assumptions \ref{assump:TrueDensityHypothesis-s} hold %
    and let $s > -1.$ %
    Suppose that $\widehat{p}_{n,s}$ is the MLE of the $s$-concave density $p_0$.
    Then the log-likelihood ratio (divided by $n$) $\PP_n \log (\widehat{p}_{n,s}
    /p_0) $ satisfies
    \begin{equation}
      \PP_n \log \left ( \frac{\widehat{p}_{n,s}}{p_0} \right ) =
      O_p(n^{-4/5}).
      \label{LogLRBigOhPnMinusFourFifths}
    \end{equation}
  \end{corollary}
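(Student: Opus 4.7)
The plan is to apply Corollary~7.5 of \cite{MR1739079}, which is the natural companion to Theorem~7.4 of the same reference and establishes that whenever an MLE satisfies $H(\widehat{p}_n, p_0) = O_p(\delta_n)$ under a suitable bracketing entropy bound, one automatically obtains $\PP_n \log(\widehat{p}_n/p_0) = O_p(\delta_n^2)$. Since Theorem~\ref{thm:HellingerConvergenceRate} already supplies $\delta_n = n^{-2/5}$, plugging this in yields exactly the rate $n^{-4/5}$ appearing in \eqref{LogLRBigOhPnMinusFourFifths}.

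Step by step, I would first localize: as in the proof of Theorem~\ref{thm:HellingerConvergenceRate}, parts \ref{enum:compacta-consistency} and \ref{enum:a.s.-boundedness} of Theorem~\ref{thm:ConsistencySummary-all} imply that for any $\eta > 0$ there exists a finite $M$ with $\Pr(p_0, \widehat{p}_{n,s} \in \Pms) \ge 1 - \eta$ for all large $n$. Next, I would use the MLE optimality $\PP_n \log(\widehat{p}_{n,s}/p_0) \ge 0$ together with the Jensen bound $P_0 \log(\widehat{p}_{n,s}/p_0) \le -2 H^2(\widehat{p}_{n,s}, p_0) \le 0$ to reduce the problem to an upper bound on the centered empirical process $(\PP_n - P_0) \log(\widehat{p}_{n,s}/p_0)$. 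Finally, I would feed the bracketing bound $\log N_{[\,]}(\epsilon, \Pms, H) \lesssim \epsilon^{-1/2}$ from Theorem~\ref{thm:BracketEntropyBound} into the standard modulus-of-continuity argument applied to the class $\mathcal{F} = \{\log(p/p_0) : p \in \Pms\}$. The bracketing integral then satisfies $J_{[\,]}(\delta) \lesssim \delta^{3/4}$, and combining with $H(\widehat{p}_{n,s}, p_0) = O_p(n^{-2/5})$ gives $(\PP_n - P_0)\log(\widehat{p}_{n,s}/p_0) = O_p(n^{-1/2} \cdot n^{-3/10}) = O_p(n^{-4/5})$.

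The main technical obstacle is the conversion step: the bracketing bound for $\Pms$ in the Hellinger metric must be translated into a bracketing bound for $\mathcal{F}$ in $L_2(P_0)$, and the variance $P_0 (\log(p/p_0))^2$ must be controlled by a constant multiple of $H^2(p, p_0)$ uniformly over $p \in \Pms$. Both ingredients require integrability estimates for $\log p_0$ against $p_0$ in the tails, which are supplied by the explicit envelope of Proposition~\ref{prop:calPMdSEnvelope} together with the tail decay of $p_0$ itself. These bookkeeping steps are precisely what Corollary~7.5 of \cite{MR1739079} packages together, so in the end the proof reduces to checking that the hypotheses of that corollary are met by the bracketing bound of Theorem~\ref{thm:BracketEntropyBound} and the Hellinger rate of Theorem~\ref{thm:HellingerConvergenceRate} on the localization $\Pms$.
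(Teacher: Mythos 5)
Your proposal is correct and takes essentially the same route as the paper: the paper simply cites Corollary~7.5 of van de Geer (2000) together with the Hellinger rate from Theorem~\ref{thm:HellingerConvergenceRate} and the bracketing bound from Theorem~\ref{thm:BracketEntropyBound}, exactly as you do. Your additional unpacking of the localization step, the MLE optimality inequality, and the modulus-of-continuity calculation is a faithful account of what that corollary does internally, but it is not a different argument.
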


  The result (\ref{LogLRBigOhPnMinusFourFifths}) is of interest in connection
  with the study of likelihood ratio statistics for tests (and resulting
  confidence intervals) for the mode $m_0$ of $p_0$ which are being developed
  by the first author.  In fact, the conclusions of
  Theorem~\ref{thm:HellingerConvergenceRate} and
  Corollary~\ref{cor:LogLRconvergenceRate} are also true for the {\sl
    constrained maximum likelihood estimator} $\widehat{p}_n^0$ of $p_0$
  constrained to having (known) mode at $0$.  We will not treat this here, but
  details will be provided along with the development of these tests in
  \cite{doss-phd} and \cite{Doss-Wellner:2013Mode}.

  The rates we have given are for the Hellinger distance (as well as any
  distance smaller than the Hellinger distance) and also for the
  log-likelihood ratio.  The Hellinger metric is very natural for maximum
  likelihood estimation given i.i.d.\ observations, and thus many results are
  stated in terms of Hellinger distance (e.g., \cite{MR1739079} focuses much
  attention on Hellinger distance).  Use of the Hellinger metric is not
  imperative e.g., Theorem 3.4.1 of \cite{MR1385671} %
  is stated for a general metric, but getting rates for other metrics (e.g.,
  $L_r$ for $r > 1$) would require additional work since using Theorem 3.4.1
  of \cite{MR1385671} %
  requires verification of additional conditions which are not immediate.

  Estimators based on shape constraints have been shown to have a wide range
  of adaptivity properties.  For instance, \cite{MR2546798} %
  study the sup-norm on compacta (which we expect to behave differently than
  Hellinger distance) and show that the log-concave MLE is rate-adaptive to
  H\"older smoothness $\beta$ when $\beta \in [1,2]$.  In the case of
  univariate convex regression, \cite{Guntuboyina:2013tl} were able to
  show that the least-squares estimator achieves a
  parametric rate (up to log factors) at piecewise linear functions $\vp_0$.
  They do this by computing entropy bounds for local classes of convex
  functions within a distance $\delta$ of the true function.  We have not yet
  succeeded in extending the bracketing entropy bound of our
  Theorem~\ref{thm:BracketEntropyBound} to analogous local classes, because
  the proof method used for our theorem does not keep tight enough control of
  concave-function classes that do {\em not} drop to $0$ except near a
  pre-specified boundary (where one expects the entropies to be smaller).
  It
  seems that techniques more similar to
  those used by \cite{MR2519658}  %
  or
  \cite{Gunt-Sen:12} may be applicable.
  \section{Main Results: general $h$-transformed classes}
  \label{sec:Appendix0} %

  Here we state and prove the main results of the paper in their most general
  form, via arbitrary concave-function transformations, $h$.
  Similarly to our definition of $\mathcal{P}_s$, we define
  \begin{equation}
    \mathcal{P}_h :=
    \left \{ h \circ \mathcal{C} \right \} \cap \left\{ p : \int p \,
      d\lambda = 1\right\},
    \label{eq:defn:Ph}
  \end{equation}
  the class of $h$-concave-transformed densities, and we study the MLE over
  $\mathcal{P}_h$.  These will be described in more detail in
  Definition~\ref{defn:transformation} and
  Assumption~\ref{assum:transformation}.  In order to study rates of
  convergence, we need to bound bracketing entropies of relevant function
  classes.  Control of the entropies of classes of concave (or convex)
  functions with respect to supremum metrics requires control of Lipschitz
  constants, which we do not have. Thus, we will use $L_r$ metrics with $r
  \ge 1$.
  First, we will define the
  classes of concave and
  concave-transformed functions which we will be studying.

  While we consider $\varphi \in \mathcal{C}$ to be defined on $\RR$, we
  will still sometimes consider a function $\psi$ which is the
  ``restriction of $\varphi$ to $I$'' for an interval $I \subset \RR$.  By this,
  in keeping with the above-mentioned convention, we still mean that $\psi$
  is defined on $\RR$, where if $x \notin I$ then $\psi(x) = -\infty$, and
  otherwise $\psi(x) = \varphi(x)$. We will let $\varphi \vert_I$ denote
  such restricted functions $\psi$.
  When we want to speak
  about the range of any function $f$ (not necessarily concave) we
  will use set notation, e.g.\ for $S \subseteq \RR$,
  $ f(S) := \{ y :  f(x)=y \mbox{ for some } x \in S \}$.
  We will sometimes want to restrict not the domain
  of $\varphi$ but, rather, the range of $\varphi$.  We will thus let
  $\varphi \vert^I$ denote $\varphi \vert_{D_{\varphi,I}}$ for any interval
  $I \subset \RR$, where $D_{\varphi,I} = \{x : \varphi(x) \in I \}.$ Thus,
  for instance, for all intervals $I$ containing
  $\vp (\dom \vp)$
  we have
  $\varphi\vert^I \equiv \varphi$.

  We will be considering classes of nonnegative concave-transformed functions
  of the type $h \circ \mathcal{C}$ for some transformation $h$ where
  $h(-\infty) = 0$ and $h(\infty) = \infty$.
  We will elaborate on these transformations shortly, in
  Definition~\ref{defn:transformation} and
  Assumption~\ref{assum:transformation}.  We will slightly abuse notation by
  allowing the $\dom$ operator to apply to such concave-transformed
  functions, by letting $\dom h\circ \varphi := \{x : h(\varphi(x)) > 0\}$ be
  the support of $h \circ \vp$.

  The function classes in which we will be interested in the end are the
  classes $\Pdms$ defined in \eqref{eq:DefnOfCalPMOne}, or, more  generally $\Pdmh$
  defined in
  \eqref{eq:DefnOfCalPMOneH},
  to which the MLEs (of translated and rescaled data) belong, for some $M< \infty$,
  with high probability as sample size gets
  large.   However, such classes contain functions that are arbitrarily close
  to or equal to $0$ on the support of the true density $p_0$ , and these
  correspond to concave functions that take unboundedly large (negative)
  values on the support of $p_0$. Thus the corresponding concave classes do
  not have finite bracketing entropy for the $L_r$ distance.  To get around
  this difficulty, we will consider classes of truncated concave functions
  and the corresponding concave-transformed classes.

  \begin{definition}
    \label{defn:transformation}
    A {\em concave-function transformation},
    $h$, is a
    continuously differentiable %
    increasing %
    function from
    $[-\infty,\infty]$ to $[0,\infty]$ such that $h(\infty) = \infty $ and
    $h(-\infty) = 0$. We define its limit points $\tilde{y}_0 < \tilde{y}_\infty$ by
    $\tilde{y}_0 = \inf \{y : h(y) > 0 \}$ and
    $\tilde{y}_\infty = \sup \{y : h(y) < \infty\}$, we assume that
    $h(\tilde{y}_0) = 0$ and $h(\tilde{y}_\infty) = \infty$. %
  \end{definition}

  \begin{mynotestoself}
    \begin{notestoself}
      We could allow $h$ to be discontinuous at $\tilde{y}_0$ if $\tilde{y}_0 $
      is not $-\infty$, so $h_0$ is the minimum value of $h$ that is not $0$.
      This allows, for instance, for bounding the height of the density being
      estimated, a technique that has been used as a form of regularization in
      some cases. %
    \end{notestoself}
  \end{mynotestoself}

  \begin{remark}
    These transformations correspond to ``decreasing transformations'' in the
    terminology of \cite{MR2766867}. %
    In that paper, the transformations are applied to convex functions whereas
    here we apply our transformations to concave ones. Since negatives of
    convex functions are concave, and vice versa, each of our transformations
    $h$ defines a decreasing transformation $\tilde{h}$ as defined in
    \cite{MR2766867} %
    via $\tilde{h}(y) = h(-y)$.
  \end{remark}
  We will sometimes make the following assumptions.
  \begin{assumption}[Consistency Assumptions on $h$]
    \label{assum:transformation}
    Assume that the transformation $h$ satisfies:
    \begin{enumerate}[label=T.\arabic{*}]
    \item \label{assum:enum:y0-infinite}
      $h'(y) = o(|y|^{- (\alpha+1)} )$ as %
      $y \searrow -\infty $
      for some
      $\alpha > 1$.
    \item \label{assum:enum:uniflip} If $\tilde{y}_0 >-\infty$, then for all
      $\tilde{y}_0 <c < \tilde{y}_\infty$, there is an $0< M_c < \infty$ such that
      $h'(y) \le M_c$ for all $y \in (\tilde{y}_0, c]$;
    \item \label{assum:enum:yinfty-finite} If $\tilde{y}_\infty < \infty$ then for some
      $0<c<C$, $c (\tilde{y}_\infty - y)^{-\beta} \le h(y) \le      C
      (\tilde{y}_\infty - y)^{-\beta}$
      for some $\beta > 1$ and $y$ in a neighborhood of $\tilde{y}_\infty$;
    \item \label{assum:enum:yinfty-infinite} If $\tilde{y}_\infty = \infty$ then
      $h(y)^{\gamma} h(-Cy) = o(1)$ for some $\gamma, C > 0$, as $y \to \infty$.
    \end{enumerate}
  \end{assumption}

  \begin{oldtext}
    Note that Assumption~\eqref{assum:enum:uniflip} does not preclude $h$ from
    being discontinuous at $\tilde{y}_0$ when $\tilde{y}_0 < \infty$.
    Additionally, notice this assumption holds automatically if
    $\tilde{y}_0 = -\infty$ when Assumption~\eqref{assum:enum:y0-infinite}
    holds.
  \end{oldtext}

  \begin{example}
    \label{ex:LogConcaveTransf}
    The class of log-concave densities, as discussed in
    Section~\ref{sec:BracketEntropyBounds} is obtained by taking $h(y)=e^y
    \equiv h_0 (y)$ for $y \in \RR$.  Then $\tilde{y}_0 = - \infty$ and
    $\tilde{y}_\infty = \infty$.
    Assumption~\eqref{assum:enum:yinfty-infinite} holds with any $\gamma > C
    > 0$, and Assumption~\eqref{assum:enum:y0-infinite} holds for any
    $\alpha  > 1$.
  \end{example}

  \begin{example}
    \label{ex:SConcaveTransf}
    The classes $\mathcal{P}_s$ of $s$-concave densities with $s \in (-1,0)$, as discussed in
    Section~\ref{sec:BracketEntropyBounds}, are obtained by taking $h(y) =
    (-y)_+^{1/s} \equiv h_s (y)$ for $s \in (-1,0)$ and for $y < 0$.  Here
    $\tilde{y}_0 = -\infty$ and $\tilde{y}_\infty = 0$.
    Assumption~\eqref{assum:enum:yinfty-finite} holds for $\beta = -1/s$, and
    Assumption~\eqref{assum:enum:y0-infinite} holds for any $\alpha \in
    (1,-1/s)$.

    Note that the same classes of densities ${\cal P}_s$ result from the
    transforms $\tilde{h}_s (y) = (1+sy)_{+}^{1/s}$ for $y \in (-\infty,
    -1/s) = (\tilde y_0, \tilde y_\infty)$: if $p = h_s (\varphi) \in {\cal
      P}_s$, then also $p = \tilde{h}_s ( \tilde{\varphi}_s ) \in {\cal P}_s$
    where $\tilde{\varphi}_s \equiv - (\varphi + 1)/s $ is also concave.
    With this form of the transformation we clearly have $\tilde{h}_s (y)
    \rightarrow e^{y}$ as $s \nearrow 0$, connecting this example with
    Example 4.1.
  \end{example}

  \begin{example}
    \label{ex:SConcaveTransfPositiveS}
    The classes of $s$-concave functions with $0 < s < \infty $, as discussed
    in Section~\ref{sec:BracketEntropyBounds} are obtained by taking $h(y) =
    (y)_+^{1/s} \equiv h_s (y)$.  Here $\tilde{y}_0 = 0$ and
    $\tilde{y}_\infty = \infty$.  Assumption~\eqref{assum:enum:y0-infinite}
    holds for any $\alpha > 1$, Assumption~\eqref{assum:enum:uniflip}
    fails if $s>1$, and Assumption~\eqref{assum:enum:yinfty-infinite} holds
    for any (small) $C, \gamma >0$.  These (small) classes ${\cal P}_h$ are
    covered by our Corollary~\ref{cor:SConcavePositiveConnectCor}.
  \end{example}

  \begin{example}
    \label{ex:HybridExample}
    To illustrate the possibilities further, consider $h(y) = \tilde{h}_s( y)
    = (1+ sy)^{1/s}$ for $y \in [0,-1/s)$ with $-1< s < 0$, and $h(y) =
    \tilde{h}_r (y) $ for $y \in (-\infty, 0)$ and $r \in (-1,0]$.  Here
    $\tilde{y}_0 = -\infty$ and $\tilde{y}_\infty = -1/s$.
    Assumption~\eqref{assum:enum:yinfty-finite} holds for $\beta = -1/s$, and
    Assumption~\eqref{assum:enum:y0-infinite} holds for any $\alpha \in (1,
    -1/r)$.  Note that $s=0$ is not allowed in this example, since then if $r
    < 0$, Assumption~\eqref{assum:enum:yinfty-infinite} fails.
  \end{example}

  The following lemma shows that concave-transformed classes
  yield nested families ${\cal P}_h$ much as the $s$-concave classes are
  nested, as was noticed in Section~\ref{sec:intro}.

  \begin{lemma}
    \label{lem:convex-ordered-classes}
    Let $h_1$ and $h_2 $ be concave-function transformations.  If $\Psi $ is a
    concave function such that $h_1 = h_2 \circ \Psi$, then
    ${\cal P}_{h_1} \subseteq {\cal P}_{h_2}$.
  \end{lemma}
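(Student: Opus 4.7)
The plan is to unpack the definitions and reduce the claim to a standard fact about composition of concave functions, then verify that the required regularity (closed, proper) is preserved.

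Fix $p \in \mathcal{P}_{h_1}$, so by \eqref{eq:defn:Ph} there exists $\varphi \in \mathcal{C}$ with $p = h_1 \circ \varphi$ and $\int p\, d\lambda = 1$. Since $h_1 = h_2 \circ \Psi$, we have $p = h_2 \circ (\Psi \circ \varphi)$, and the integrability condition is automatic because $p$ is unchanged. Thus the entire content of the lemma is to verify that $\Psi \circ \varphi \in \mathcal{C}$, i.e.\ that it is a closed proper concave function on $\RR$.

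First I would argue that $\Psi$ must be non-decreasing. Indeed, by Definition~\ref{defn:transformation} both $h_1$ and $h_2$ are strictly increasing on the interior of their effective ranges; since $h_2 \circ \Psi = h_1$ and $h_2$ is increasing, $\Psi$ inherits monotonicity from $h_1$. Combined with the hypothesis that $\Psi$ is concave, this makes $\Psi$ a non-decreasing concave function (extended to $[-\infty,\infty]$ by taking the values $\Psi(-\infty)$ and $\Psi(\infty)$ as appropriate limits forced by $h_1(\pm\infty) \in \{0,\infty\}$).

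Next I would apply the standard composition fact: if $\Psi$ is concave and non-decreasing and $\varphi$ is concave, then $\Psi \circ \varphi$ is concave. For $x_0, x_1 \in \RR$ and $\theta \in (0,1)$, the concavity of $\varphi$ gives $\varphi((1-\theta)x_0 + \theta x_1) \ge (1-\theta)\varphi(x_0) + \theta \varphi(x_1)$, and then applying $\Psi$ (which preserves the inequality since it is non-decreasing) followed by concavity of $\Psi$ yields
\begin{equation*}
  \Psi(\varphi((1-\theta)x_0 + \theta x_1)) \ge (1-\theta)\,\Psi(\varphi(x_0)) + \theta\,\Psi(\varphi(x_1)).
\end{equation*}

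Finally I would check that $\Psi \circ \varphi$ is closed and proper. Properness (i.e.\ $\Psi\circ\varphi \not\equiv -\infty$ and $\Psi\circ\varphi < \infty$ everywhere) follows because $p = h_2\circ(\Psi\circ\varphi)$ is a density, which forces $\Psi\circ\varphi$ to take values in $[\tilde y_0^{(2)}, \tilde y_\infty^{(2)}]$ on a set of positive Lebesgue measure and to be finite a.e.; the convention that $\varphi \equiv -\infty$ outside $\dom\varphi$ is matched by $h_1(-\infty) = 0$, and this is consistent with $\Psi(-\infty)$ being the corresponding boundary limit. Closedness (upper semicontinuity) of $\Psi\circ\varphi$ follows from the upper semicontinuity of $\varphi$ together with the continuity and monotonicity of $\Psi$: for any $c$, the super-level set $\{\Psi\circ\varphi \ge c\}$ equals $\{\varphi \ge \Psi^{-1}([c,\infty])\} = \{\varphi \ge d\}$ for $d = \inf\{y : \Psi(y) \ge c\}$, which is closed since $\varphi$ is closed. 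Thus $\Psi\circ\varphi \in \mathcal{C}$, so $p \in \mathcal{P}_{h_2}$, completing the proof. The only mildly delicate step is handling the boundary values $\pm\infty$ consistently with the $\dom$ conventions; this is bookkeeping rather than a genuine obstacle.
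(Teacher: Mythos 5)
Your proof is correct, but it takes a genuinely different route from the paper. The paper disposes of this lemma in one line by citing Lemma~2.5 of Seregin and Wellner (reference \cite{MR2766867}), translating between their ``decreasing convex transformation'' terminology and the ``increasing concave transformation'' terminology used here. You instead give a self-contained argument: unpack the definition of $\mathcal{P}_h$, observe $p = h_1\circ\varphi = h_2\circ(\Psi\circ\varphi)$, and then verify directly that $\Psi\circ\varphi\in\mathcal{C}$. Your core composition step --- $\Psi$ non-decreasing (forced because $h_2\circ\Psi = h_1$ with both $h_i$ increasing) and concave, hence $\Psi\circ\varphi$ concave for $\varphi$ concave --- is the standard and correct argument. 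What the paper's citation buys is that the cited lemma has already done the bookkeeping on boundary values, domains, and the closed/proper requirements in one place; your version must redo this by hand.

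One small point worth tightening in your closedness step: you invoke ``continuity and monotonicity of $\Psi$'' to conclude $\{\Psi\circ\varphi\ge c\}=\{\varphi\ge d\}$ is closed, but what you actually need is that $\{y:\Psi(y)\ge c\}$ is a closed half-line, i.e.\ that $\Psi$ is upper semicontinuous (closed as a concave function). The lemma statement only says ``$\Psi$ is a concave function,'' so strictly speaking you should either note that $\Psi$ may be replaced by its closure without affecting $h_2\circ\Psi$ (since $h_2$ is continuous and the modification only affects the relative boundary of $\dom\Psi$, a Lebesgue-null set), or simply take closedness of $\Psi$ as part of the hypothesis. This is the kind of bookkeeping the cited lemma absorbs; your acknowledgement that the boundary handling is ``bookkeeping rather than a genuine obstacle'' is fair, but this particular gap should be named rather than waved past.
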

  \begin{proof}
    Lemma~2.5, page 6, of \cite{MR2766867} %
    gives this result, in the notation of ``decreasing (convex) transformations.''
  \end{proof}

  Now, for an interval $I \subset \RR$, let
  \begin{equation*}
    \mathcal{C}\lp I, [-B,B] \rp
    = \{ \vp \in
    \mathcal{C} :  -B \le \vp(x) \le B
    \mbox{ if } x \in \dom \vp = I \}.
  \end{equation*}
  Despite making no restrictions on the Lipschitz behavior of the function
  class, we can still bound the entropy, as long as our metric is $L_r$ with
  $1 \le r < \infty$ rather than $L_\infty$.
  \begin{proposition}[Extension of Theorem~3.1 of \cite{Gunt-Sen:12}]
    \label{prop:GSextension}
    Let $b_1 < b_2$.
    Then there exists
    %
    $C < \infty$ such that
    \begin{equation}
      \log N_{[\, ]} (\epsilon, \mathcal{C}([b_1,b_2], [-B,B]), L_r)
      \le C \lp\frac{B (b_2-b_1)^{1/r}}{\epsilon} \rp^{1/2}
      \label{eq:extensionGSbound}
    \end{equation}
    for all $\epsilon > 0 $.  
  \end{proposition}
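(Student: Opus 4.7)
The plan is to extend the Guntuboyina-Sen covering bound to a bracketing bound by constructing explicit piecewise linear upper and lower envelopes that exploit the concavity structure directly, rather than inferring brackets from an abstract $L_r$ cover.

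First, by the affine change of variables $x \mapsto (x - b_1)/(b_2 - b_1)$ together with the vertical rescaling $\varphi \mapsto \varphi/B$, $L_r$-distances scale by the factor $B(b_2-b_1)^{1/r}$, so it suffices to prove $\log N_{[\,]}(\epsilon, \mathcal{C}([0,1], [-1,1]), L_r) \lesssim \epsilon^{-1/2}$ for all sufficiently small $\epsilon$. I would then set up a grid $0 = x_0 < x_1 < \cdots < x_m = 1$ with $m \asymp \epsilon^{-1/2}$ and a quantization of the $y$-axis of mesh $\delta_y \asymp \epsilon$. For each $\varphi \in \mathcal{C}([0,1], [-1,1])$, define the lower bracket $\ell_\varphi$ as the piecewise linear function with vertices at $(x_j, \lfloor \varphi(x_j)/\delta_y \rfloor \delta_y)$, and the upper bracket $u_\varphi$ by taking, on each $[x_j, x_{j+1}]$, the piecewise linear function built from the quantized endpoint values augmented by the tangent/subgradient extensions at $x_j$ and $x_{j+1}$. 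Concavity of $\varphi$ guarantees $\ell_\varphi \le \varphi$ via the chord inequality and $\varphi \le u_\varphi$ via the subgradient inequality (after a safety margin of order $\delta_y$ to absorb quantization).

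The third step is counting: the number of distinct bracketing pairs $(\ell_\varphi, u_\varphi)$ is governed by the number of admissible sequences of quantized values $a_0, a_1, \ldots, a_m$ on the $y$-grid that are themselves concave, i.e.\ with nonincreasing discrete second differences. A standard lattice-path argument shows that the number of such concave integer-valued sequences is at most $\exp(C \epsilon^{-1/2})$, matching the exponent in the covering bound of Guntuboyina-Sen. To control the bracket width in $L_r$, the pointwise gap $u_\varphi - \ell_\varphi$ on $[x_j, x_{j+1}]$ is bounded using the concavity-induced chord-tangent gap, which at an interior interval is at most $(x_{j+1} - x_j)\bigl|\varphi'(x_j^+) - \varphi'(x_{j+1}^-)\bigr|$; summing, the slope increments telescope and the total $L_r$-norm collapses to $O(\epsilon)$ by the choice of $m$ and $\delta_y$.

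The principal obstacle is handling the two boundary subintervals $[x_0, x_1]$ and $[x_{m-1}, x_m]$, where a concave function in $\mathcal{C}([0,1], [-1,1])$ may have arbitrarily large one-sided slopes, so the chord-tangent bound fails directly. I would address this by adopting a non-uniform grid whose spacing contracts toward the endpoints, as in Dryanov and Guntuboyina-Sen---precisely the mechanism that produces the $\epsilon^{-1/2}$ rate rather than the $\epsilon^{-1}$ rate one would obtain by a naive monotone-function argument; alternatively one may dominate $u_\varphi - \ell_\varphi$ on these boundary pieces by $2B$ times their Lebesgue measure and absorb the contribution into the error budget by shrinking the outermost mesh. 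The threshold $\epsilon_3$ in the statement arises precisely from requiring that $\epsilon$ be small enough that these boundary pieces, as well as the quantization margin of order $\delta_y$, remain compatible with the target bound.
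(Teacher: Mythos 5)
Your approach is genuinely different from the paper's: you propose to build brackets directly from a grid-plus-quantization construction and count concave lattice sequences, whereas the paper lifts Guntuboyina--Sen's own covering argument to a bracketing argument by observing that the sup-norm covers produced by their Theorem~3.2 (for Lipschitz-constrained concave classes) are, up to a factor of two, sup-norm brackets, and then re-running their dyadic decomposition $[\delta_m,\delta_{m+1}]$ with the same choices of $\delta_m$ and $\alpha_m$. That route is shorter because the hard combinatorics is already packaged inside Theorem~3.2.

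There is, however, a genuine gap in the counting step as you state it. You claim ``a standard lattice-path argument shows that the number of concave integer-valued sequences on the $y$-grid is at most $\exp(C\epsilon^{-1/2})$.'' With $m\asymp\epsilon^{-1/2}$ nodes, $y$-mesh $\delta_y\asymp\epsilon$, and no Lipschitz restriction, each first difference $d_j=a_{j+1}-a_j$ can take on the order of $1/\epsilon$ quantized values, and the number of nonincreasing sequences $(d_0,\dots,d_{m-1})$ from a set of that size is $\binom{m+O(1/\epsilon)}{m}$, whose logarithm is of order $\epsilon^{-1/2}\log(1/\epsilon)$, not $\epsilon^{-1/2}$. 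The extra logarithm is exactly what Bronshtein, Dryanov, and Guntuboyina--Sen work to remove; the naive stars-and-bars count does not remove it. On the interior interval $[\mu,\nu]$ the Lipschitz constant is bounded by $2/\mu$, which caps $|d_j|\lesssim h$ and brings the count down to $\exp(C\epsilon^{-1/2})$, but your statement does not insert that Lipschitz cap, and near the boundary no such cap is available.

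This means the boundary handling is not a side issue you can defer: it is load-bearing, and neither of your two alternatives resolves it as stated. The ``shrink the outermost mesh and dominate by $2B$'' alternative controls the $L_r$ size of the boundary bracket but leaves the adjacent cell with Lipschitz constant of order $\epsilon^{-r}$, so the counting there blows up again; a single shrink does not terminate the cascade. The non-uniform grid alternative is the right idea, but specifying the geometric progression of cell widths and per-cell tolerances, and verifying both that the combined $L_r$ size stays $O(\epsilon)$ and that the combined log-cardinality stays $O(\epsilon^{-1/2})$, is essentially a re-derivation of the Guntuboyina--Sen computation of $S_1$ and $S_2$. The paper avoids redoing that computation by quoting Theorem~3.2 of Guntuboyina--Sen on each subinterval $[\delta_m,\delta_{m+1}]$ (where the restricted class is Lipschitz with constant $2/\delta_m$) and then reusing their choices $\delta_m=\exp(r((r+1)/(r+2))^{m-1}\log\eta)$ and $\alpha_m=\eta\exp(-r(r+1)^{m-2}(r+2)^{-(m-1)}\log\eta)$ verbatim. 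If you want to pursue your from-scratch construction, you will need to reproduce that cascade explicitly, including the per-cell Lipschitz caps in the sequence count; otherwise the cleaner path is the paper's, namely turn GS's sup-norm covers into brackets and inherit their bookkeeping.
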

  Our first main result has a statement analogous
  to that of the previous proposition, but it is not about concave or convex
  classes of functions but rather about concave-transformed classes, defined
  as follows.  Let $h$ be a concave-function transformation. Let
  $\mathcal{I}[b_1,b_2]$ be all intervals $I$ contained in $[b_1, b_2]$, and
  let
  \begin{equation*}
    \mathcal{F} \lp  \mathcal{I}[b_1,b_2]  , [0,B] \rp
    = \lb f  : f = h \circ \vp, \vp \in \mathcal{C}, \dom \vp \subset
    [b_1,b_2], 0 \le f \le B \rb
  \end{equation*}

  \begin{theorem}
    \label{thm:entropy_compactx}
    Let $r \ge 1$. Assume $h$ is a concave-function transformation.  If
    $\tilde y_0 = -\infty$ then assume $h'(y) = o( |y|^{-(\alpha+1)})$ for
    some $\alpha > 0$ as $y \to -\infty$.  Otherwise assume
    Assumption~\eqref{assum:enum:uniflip} holds.
    Then      for all $\epsilon >0 $ 
    \begin{equation*}
      \frac{\log N_{[\, ]}(\epsilon ,
        \mathcal{F} \lp \mathcal{I}[b_1,b_2], [0,B] \rp,
        L_r)}{ (B (b_2-b_1)^{1/r})^{1/2}}
      \lesssim
      \epsilon^{-1/2}
    \end{equation*}
    where $\lesssim$ means $\le$ up to a constant.  The constant implied by
    $\lesssim$ depends only on $r$ and $h$.
  \end{theorem}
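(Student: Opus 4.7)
The plan is to transfer the $L_r$-bracketing entropy bound of Proposition~\ref{prop:GSextension} for concave functions to the $h$-transformed class $\mathcal{F}(\mathcal{I}[b_1,b_2],[0,B])$ via a multi-scale decomposition in the range of $\varphi$. Since $\varphi$ is concave and hence unimodal, its super-level sets $\{\varphi \ge y\}$ are nested intervals in $[b_1,b_2]$, which allows us to bracket $\varphi$ level-by-level and lift through $h$ using monotonicity.

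First, choose a truncation threshold $\eta \asymp \epsilon/(b_2-b_1)^{1/r}$ so that the tail $\{f<\eta\}$ contributes $O(\epsilon)$ in $L_r$, and set $y_0 := h^{-1}(\eta)$, $y_\infty := h^{-1}(B)\wedge\tilde y_\infty$. Partition $[y_0,y_\infty]$ into $K = O(\log(B/\eta))$ blocks $[y_{k-1},y_k]$ chosen geometrically in the $h$-scale (e.g.\ $h(y_k) = 2^k \eta$). For each block, the annulus $E_k := \{\varphi \in [y_{k-1}, y_k)\}$ is, by unimodality, a union of at most two intervals whose endpoints we discretize on a fine grid of spacing $\tau$; this contributes only $O(K\log((b_2-b_1)/\tau))$ to the log-entropy. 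Apply Proposition~\ref{prop:GSextension} on each annular piece to the concave class of range $y_k-y_{k-1}$ at bracket size $\delta_k$, obtaining $\exp\bigl(O((y_k-y_{k-1})^{1/2}(b_2-b_1)^{1/(2r)}\delta_k^{-1/2})\bigr)$ brackets, and lift through the monotone $h$ to $L_r$-brackets on $f$ of size at most $L_{h,k}\delta_k$, where $L_{h,k} := \sup_{[y_{k-1},y_k]} h'$; the nested super-level structure lets us glue the level-wise brackets into global brackets on $[b_1,b_2]$ without additional combinatorial blow-up.

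The main obstacle, and the reason a single-level truncation does not suffice, is that a uniform Lipschitz lift using $L_h = \sup_{[y_0,y_\infty]} h'$ at a single level $\eta\asymp\epsilon$ typically introduces an extraneous $\sqrt{\log(1/\epsilon)}$ (or polynomial-in-$\epsilon$) factor, because the product $L_h(y_\infty-y_0)$ generally exceeds the desired $O(B)$ scaling: $L_h = B$ while $y_\infty-y_0\asymp\log(1/\eta)$ for $h(y)=e^y$, and analogous polynomial slack appears for the negative-power $h_s$. The remedy is to allocate $\delta_k$ across levels by a Lagrange optimization of $\sum_k (y_k-y_{k-1})^{1/2}(b_2-b_1)^{1/(2r)}\delta_k^{-1/2}$ under the lifted-error constraint $\sum_k L_{h,k}\delta_k \le \epsilon$: the optimum $\delta_k \propto (y_k-y_{k-1})^{1/3}L_{h,k}^{-2/3}$ gives total log-entropy $\asymp \epsilon^{-1/2}\bigl(\sum_k (y_k-y_{k-1})^{1/3}L_{h,k}^{1/3}\bigr)^{3/2}$, and the inner sum telescopes by a geometric-series argument in the $h$-scale to $O(B^{1/3}(b_2-b_1)^{1/(3r)})$. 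Assumptions~\ref{assum:enum:uniflip} and~\ref{assum:enum:y0-infinite} are precisely what force this geometric convergence: \ref{assum:enum:uniflip} bounds $h'$ uniformly near a finite $\tilde y_0$, while the decay $h'(y) = o(|y|^{-(\alpha+1)})$ in \ref{assum:enum:y0-infinite} integrates to $h(y)=o(|y|^{-\alpha})$ near $-\infty$, so that $L_{h,k}$ and $y_k-y_{k-1}$ behave polynomially in the level $k$ with exponents making the sum convergent. Combining this main term with the sub-leading $O(K\log((b_2-b_1)/\tau))$ discretization contribution gives the claimed bound $\log N_{[\,]} \lesssim (B(b_2-b_1)^{1/r})^{1/2}\epsilon^{-1/2}$.
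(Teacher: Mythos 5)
Your high-level strategy is the same as the paper's: decompose the range of the underlying concave function into levels, bracket level-by-level via Proposition~\ref{prop:GSextension}, lift through the local Lipschitz constant of $h$, and glue; the truncation scale $\eta\asymp\epsilon$ and the observation that the annuli are (at most) two intervals also match. Where you genuinely diverge is in the choice of levels and the allocation of the per-level error budget: the paper fixes $y_\gamma = -2^\gamma$ (geometric in the \emph{$\varphi$-scale}), applies Proposition~\ref{prop:GSextension} on the whole sublevel set $I^L_{i_\gamma,\gamma}$ with range $[y_\gamma, y_0]$ and an \emph{explicit} bracket size $\epsilon^B_\gamma = \epsilon\,|y_{\gamma-1}|^{(\alpha+1)/2}$, whereas you take $y_k$ geometric in the \emph{$h$-scale}, $h(y_k) = 2^k\eta$, restrict to the annulus of range $y_k-y_{k-1}$, and then choose $\delta_k$ by Lagrange optimization. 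These are genuinely different allocations (the paper's is suboptimal in the Lagrangian sense but still yields $O(\epsilon^{-1/2})$; yours aims at the optimal constant).

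The gap is in the step ``the inner sum telescopes by a geometric-series argument in the $h$-scale.'' For this you need
$S = \sum_k \bigl((y_k-y_{k-1})\,L_{h,k}\bigr)^{1/3}$
to be bounded, and the natural route is to argue $(y_k-y_{k-1})L_{h,k}\lesssim h(y_k)-h(y_{k-1}) = 2^{k-1}\eta$ and then sum geometrically. But by the mean value theorem $(y_k-y_{k-1})L_{h,k} = 2^{k-1}\eta\cdot\sup_{[y_{k-1},y_k]}h'\big/ h'(\zeta_k)$ for some $\zeta_k$, so that claim is equivalent to a \emph{Harnack-type two-sided bound} $\sup_{[y_{k-1},y_k]}h'\lesssim\inf_{[y_{k-1},y_k]}h'$ on each level. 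Assumption~\ref{assum:enum:y0-infinite} only gives an \emph{upper} envelope $h'(y)=o(|y|^{-(\alpha+1)})$ and imposes nothing from below; $h'$ may dip far beneath that envelope on long stretches (making $y_k-y_{k-1}$ large) while touching it elsewhere in the same level (making $L_{h,k}$ large), so the ratio need not be bounded, and with it the length of the level-$k$ annulus escapes control. The paper's choice $y_\gamma=-2^\gamma$ is precisely what sidesteps this: $y_\gamma-y_{\gamma-1}=2^{\gamma-1}$ is pinned down a priori, and the assumption gives $L_{h,\gamma}\lesssim |y_{\gamma-1}|^{-(\alpha+1)} = 2^{-(\gamma-1)(\alpha+1)}$ directly, so both the lifted bracket size $L_{h,\gamma}\epsilon^B_\gamma\lesssim\epsilon\, 2^{-(\gamma-1)(\alpha+1)/2}$ and the log-count $(|y_\gamma|/\epsilon^B_\gamma)^{1/2}\lesssim\epsilon^{-1/2}\,2^{\gamma(1-(\alpha+1)/2)/2}$ are manifestly geometric with $\alpha>1$. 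Your argument would go through if you substituted $\varphi$-scale levels for $h$-scale levels before running the Lagrangian, or if you added a regularity hypothesis on $h'$; as written the convergence of $S$ is asserted, not proved, for the class of $h$ the theorem allows. (A secondary, more mechanical omission: the ``gap'' regions between the inner and outer discretized annulus endpoints also contribute to the $L_r$ bracket size, not just to the count; the paper handles this with the $\epsilon^S_\gamma$-sized gap times $h(y_{\gamma-1})$, and an analogous term must be budgeted alongside your $\tau$.)
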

  Thus, a bounded class of transformed-functions for any reasonable
  transformation behaves like a compact class of concave functions.

  We extend the
  definition \eqref{eq:DefnOfCalPMOne}
  to an arbitrary concave-function transformation $h$ as follows:
  \begin{equation}
    \Pdmh
    \equiv
    \lb p \in \mathcal{P}_{h} \colon \sup_{x \in \RR} p(x) \le M, 1/M \le p(x)  \mbox{
      for all } \vert x \vert \le 1  \rb.
    \label{eq:DefnOfCalPMOneH}
  \end{equation}
  As with the analogous classes of log-concave and $s$-concave
 densities,
  the class $\mathcal{P}_{M,h}$ is important because it has an upper
  envelope, which is given in the following proposition.

  \begin{proposition} %
    \label{prop:calPMOnehEnvelope}
    Let $h$ be a concave-function transformation such that
    Assumption~\eqref{assum:enum:y0-infinite} holds with exponent $\alpha_h >
    1$.
    Then for any %
    $p^{1/2} \in \Pmh^{1/2}$
    with $0 < M <\infty$,
    \begin{eqnarray}
      p^{1/2}(x)
      & \le & \left \{ \begin{array}{l l}
          D^{1/2} \left ( 1  +  \frac{L}{2M} |x|  \right )^{-\alpha_h/2}, & |x| \ge 2M+1, \\
          M^{1/2} , & |x| < 2M+1 %
        \end{array} \right \}\nonumber \\
      & \equiv & p_{u,h}^{1/2} (x),
      \label{eq:DefnOfCalPMOnehEnvelope}
    \end{eqnarray}
    where $0 < D, L < \infty$ are constants depending only on $h$ and $M$.
  \end{proposition}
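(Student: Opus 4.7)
The proof splits into two regions. For $|x| < 2M+1$ the bound $p(x) \le M$ is immediate from the definition of $\Pmh$, so all the work is in the tail $|x| \ge 2M+1$ (and by symmetry I treat only $x \ge 2M+1$). The plan is first to derive the intermediate ``$h$-simple'' bound
$$
p(x) \le h\!\left(h^{-1}(M) - \frac{L}{2M}|x|\right), \qquad |x| \ge 2M+1,
$$
which is the estimate referenced as \eqref{eq:h-simple} in the proof of Proposition~\ref{prop:calPMdSEnvelope}, and then to convert this into the stated polynomial envelope by invoking Assumption~\eqref{assum:enum:y0-infinite}.

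The intermediate bound comes from a two-point calibration of the concave function $\varphi := h^{-1}\circ p$. The hypothesis $p(1) \ge 1/M$ gives $\varphi(1) \ge h^{-1}(1/M)$. Since $\int p = 1$ and the interval $[1, 2M+1]$ has length $2M$, the density cannot exceed $1/(2M)$ on this entire interval, so there exists $x_0 \in (1, 2M+1]$ with $p(x_0) \le 1/(2M)$, and hence $\varphi(x_0) \le h^{-1}(1/(2M))$; the degenerate case $p(x_0)=0$ forces $p \equiv 0$ on $[x_0,\infty)$ by closedness and concavity of $\varphi$, leaving nothing to prove. Setting $L_0 := h^{-1}(1/M) - h^{-1}(1/(2M)) > 0$ and using $x_0 - 1 \le 2M$, the secant slope satisfies $(\varphi(x_0)-\varphi(1))/(x_0-1) \le -L_0/(2M)$, and because secant slopes of a concave function are nonincreasing this extends to $\varphi(x) \le \varphi(x_0) - (L_0/(2M))(x-x_0)$ for all $x \ge x_0$. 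Absorbing constants depending only on $h$ and $M$ into a single constant $L$ and applying the increasing map $h$ yields the intermediate inequality.

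To convert the $h$-simple bound to the stated envelope I would integrate $h'(y) = o(|y|^{-(\alpha+1)})$ from $-\infty$ to obtain $h(y) \le C_1 |y|^{-\alpha}$ for all sufficiently negative $y$, then patch this asymptotic tail with the continuity and monotonicity of $h$ on the complementary bounded region of $y$-values to produce a single constant $D$ for which $h(h^{-1}(M) - z) \le D(1+z)^{-\alpha}$ for all $z \ge 0$; taking $z = L|x|/(2M)$ gives the required envelope. The main obstacle is precisely this last step: Assumption~\eqref{assum:enum:y0-infinite} is only an asymptotic statement, while the conclusion demands a uniform polynomial bound valid throughout $[2M+1,\infty)$ with a single clean constant $D$, so one must carefully match the tail behavior of $h$ with its behavior on the intermediate bounded interval and fold everything into $D$ and $L$ without compromising the $(1 + L|x|/(2M))^{-\alpha}$ form.
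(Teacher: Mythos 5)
Your proof is correct and replaces one of the paper's steps with a more elementary argument. The paper derives its intermediate linear bound on $\varphi$ via Proposition~\ref{propGeneralHconcaveUpperBounds}, an integral inequality in which $\int p = 1$ enters through $F(x) - F(0) \le 1$; you instead use $\int p = 1$ once, at the start, to locate $x_0 \in (1, 2M+1]$ with $p(x_0) \le 1/(2M)$, and then read off linear decay of $\varphi$ from the monotonicity of concave secant slopes. The decay rate is numerically identical — your $L_0$ equals the paper's $L = h^{-1}(M^{-1}) - h^{-1}(M^{-1}/2)$, and both routes give slope $L/(2M)$ — so they agree; yours simply bypasses Proposition~\ref{propGeneralHconcaveUpperBounds}. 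Your existence argument for $x_0$ also absorbs the paper's separate Case~2 (where $\inf p \ge 1/(2M)$ on $[1,\infty) \cap \dom\varphi$) into your degenerate branch. On the step you flag as the ``main obstacle'': the paper's device is to translate $h$ so that $h^{-1}(M) = -1$ (this leaves $\Pmh$ unchanged), after which the argument of $h$ in the intermediate bound is $\le -1$ for all $x \ge 0$, and a single constant $D$ from the tail bound $h(y) \le D(-y)^{-\alpha}$ valid on $(-\infty,-1]$ suffices with no separate patching of a bounded region. Adopting this normalization would also tidy your ``absorbing constants'' step, which as written still needs a short computation (use $\varphi(x_0) \le h^{-1}(1/(2M)) < h^{-1}(M)$ and $x_0 \le 2M+1$, possibly shrinking $L$) to pass from $\varphi(x) \le \varphi(x_0) - (L_0/(2M))(x - x_0)$ to the clean form $\varphi(x) \le h^{-1}(M) - (L/(2M))x$.
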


 We would like to bound the bracketing entropy of the classes $\Pmh$.  This
requires allowing possibly
 unbounded support.  To do this, we will apply
the envelope from the
 previous proposition and then apply
Theorem~\ref{thm:entropy_compactx}.
 Because the size or cardinality of the
brackets depends on the height of
 the function class, the upper bound on
the heights given by the envelope
 allows us to take brackets of
correspondingly decreasing size and
 cardinality out towards infinity.
Combining all the brackets from the
  partition of $\RR$ yields the result.  Before we state the theorem, we need
  the following assumption, which is the more general version of Assumption
  \ref{assump:TrueDensityHypothesis-s}.
  \begin{assumption}
    \label{assump:TrueDensityHypothesis-h}
    We assume that $X_{i}$, $i=1,\ldots, n$ are i.i.d.\ random variables
    having density
    $p_0 = h \circ \varphi_0 \in {\cal P}_{h}$ where $h$ is a
    concave-function transformation.
  \end{assumption}

  \begin{theorem}
    \label{thm:entropy_noncompactx_RRd}
    Let $r \ge 1$, $M > 0$, and $\epsilon > 0$.  Let $h$ be a
    concave-function transformation such that for $g \equiv h^{1/2}$,
    Assumption~\ref{assum:transformation},\eqref{assum:enum:y0-infinite}-\eqref{assum:enum:yinfty-infinite}
    hold, with $\alpha \equiv \alpha_g > 1/r \vee 1/2$.
    Then
    \begin{equation}
      \label{eq:bracketF}
      \log N_{[\, ]}(\epsilon, \mathcal{P}_{M, h}^{1/2}, L_r)
      \le
      K_{r, M, h}
      \epsilon^{-1/2}.
    \end{equation}
    where $K_{r,M,h}$ is a constant depending only on $r$, $M$, and $h$.
  \end{theorem}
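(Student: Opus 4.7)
The plan is to construct global $L_r$-brackets for $\mathcal{P}_{M,h}^{1/2}$ by gluing together local brackets on the pieces of a dyadic partition of $\RR$, using Proposition~\ref{prop:calPMOnehEnvelope} to dominate the class in the tails and Theorem~\ref{thm:entropy_compactx} to bound the bracketing entropy on each piece.

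First I would apply Proposition~\ref{prop:calPMOnehEnvelope} to $h$ and take square roots, obtaining a common envelope $F(x) := p_{u,h}(x)^{1/2}$ for every $f = p^{1/2} \in \mathcal{P}_{M,h}^{1/2}$: on $I_0 := [-(2M+1), 2M+1]$ one has $F \le \sqrt{M}$, while in the tails $F$ has polynomial decay of some rate $\alpha > 1/r$. I would then partition $\RR$ dyadically as $I_0 \cup \bigcup_{k \ge 1}(I_k^+ \cup I_k^-)$ with $I_k^{\pm} := \pm[(2M+1)2^{k-1}, (2M+1)2^k]$, so that $|I_k^{\pm}| \asymp 2^k$ while $\sup_{I_k^{\pm}} F \le B_k$ with $B_k \asymp 2^{-k\alpha}$.

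Next, on each piece the restriction of any $f = g \circ \vp \in \mathcal{P}_{M,h}^{1/2}$ (where $g := h^{1/2}$) to $I_k^{\pm}$ is itself a $g$-transformed concave function with effective domain contained in $I_k^{\pm}$ and bounded pointwise by $B_k$; hence the restricted class lies in $\mathcal{F}(\mathcal{I}[I_k^{\pm}], [0, B_k])$. The hypotheses T.1--T.4 assumed of $g$ are exactly what Theorem~\ref{thm:entropy_compactx} requires, so the theorem yields
\begin{equation*}
  \log N_{[\,]}\bigl(\epsilon_k, \mathcal{F}(\mathcal{I}[I_k^{\pm}], [0, B_k]), L_r\bigr)
  \;\lesssim\; \bigl(B_k |I_k^{\pm}|^{1/r}/\epsilon_k\bigr)^{1/2}
\end{equation*}
for each admissible local tolerance $\epsilon_k$. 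Concatenating local brackets produces global brackets whose $L_r$-size satisfies $\|u-l\|_r^r = \sum_k \|u_k - l_k\|_r^r \le \sum_k \epsilon_k^r$, while the total bracket count is the product of the local counts.

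The final step is to choose $\{\epsilon_k\}$ with $\sum_k \epsilon_k^r \le \epsilon^r$ so as to minimize $\sum_k (B_k |I_k^{\pm}|^{1/r}/\epsilon_k)^{1/2}$. Writing $N_k := (B_k |I_k^{\pm}|^{1/r})^{1/2}$ and optimizing by Lagrange multipliers suggests $\epsilon_k \propto N_k^{2/(2r+1)}$ and a global bracketing entropy of order $\epsilon^{-1/2} (\sum_k N_k^{2r/(2r+1)})^{(2r+1)/(2r)}$. With $B_k \asymp 2^{-k\alpha}$ and $|I_k^{\pm}| \asymp 2^k$, the general term of this series is of order $2^{k(1 - \alpha r)/(2r+1)}$, which is geometrically summable exactly when $\alpha r > 1$ --- and this is precisely where the hypothesis $\alpha_g > 1/r$ is used. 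The hard part will be this last step: carefully tracking the envelope decay across the partition and verifying that each $\epsilon_k$ remains within the validity range $\epsilon_k \le \epsilon^* B_k |I_k^{\pm}|^{1/r}$ of Theorem~\ref{thm:entropy_compactx}; for pieces sufficiently far out in the tail this will fail, and one instead absorbs an entire piece into a single envelope bracket, at a cost controlled by $\|F\|_r$, which is itself finite precisely because $\alpha > 1/r$.
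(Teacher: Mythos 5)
Your plan is correct and takes essentially the same route as the paper: partition $\RR$ into a central block $I_0$ plus tail intervals, control heights via Proposition~\ref{prop:calPMOnehEnvelope} applied to $g = h^{1/2}$, apply Theorem~\ref{thm:entropy_compactx} piece by piece with local tolerance $\epsilon_k \propto N_k^{2/(2r+1)}$ (matching the paper's exponent $\beta = 1/(2r+1)$ in $a_i = A_i^{\beta}$), and absorb far-out pieces where the local theorem's validity range fails into a single envelope bracket (the paper's index set $I^*$). The only material difference is your dyadic partition $|I_k| \asymp 2^k$ versus the paper's polynomial blocks $I_i \approx [i^\gamma, (i+1)^\gamma]$ with $\gamma$ then tuned as a function of $\alpha$; your dyadic choice is arguably cleaner, since the summability condition $\alpha > 1/r$ drops out directly without introducing the auxiliary parameter $\gamma$.
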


  For the proof of this theorem (given with the other proofs, in
  Section~\ref{sec:mainresults-proofs}), we will pick a sequence
  $y_\gamma,$ for $\gamma = 1, \ldots k_{\epsilon}$ to discretize the range
  of values that a concave function $\vp$ may take, where $k_\epsilon$
  defines the index of truncation which necessarily depends on $\epsilon$ in
  order to control the fineness of the approximation.  This allows us to
  approximate a concave function $\vp$ more coarsely as $y_\gamma$ decreases,
  corresponding to approximating the corresponding concave-{\em transformed}
  function $h \circ \vp$ at the {\em same} level of fineness at all
  $y_\gamma$ levels.


  %
  %

  \begin{remark}
    We require that $h^{1/2}$, rather than $h$ itself, is a concave-function
    transformation here because to control Hellinger distance for the class
    ${\cal P}_{M,h}$, we need to control $L_2$ distance for the class ${\cal
      P}_{M,h}^{1/2}$.  Note that when $h$ is $h_s$ for any $s \in \RR$,
    $h^{1/2}$ is also a concave-function transformation.
  \end{remark}

  We can now state our main rate result theorem, which is the general form of
  Theorem~\ref{thm:HellingerConvergenceRate}.  It is proved by using
  Theorem~\ref{thm:entropy_noncompactx_RRd}, specifying to the case $r=2$.
  There is seemingly a factor of two different in the assumptions for the
  $s$-concave rate theorem (requiring $-1/s > 1$) and the assumption in the
  $h$-concave rate theorem, requiring $\alpha > 1/2$ (where, intuitively, we
  might think $\alpha$ corresponds to $-1/s$).  The reason for this
  discrepancy is that $\alpha $ in the $h$-concave theorem is $\alpha_g$
  corresponding to $g \equiv h^{1/2}$, rather than corresponding to $h$
  itself; thus $\alpha_g$ corresponds not to $(-1/s)$ but to $(-1/s)/2$.

  \begin{theorem}
    \label{thm:HellingerRateTheoremFinal}
    Let Assumption~\ref{assump:TrueDensityHypothesis-h} hold and let
    $\widehat{p}_n$ be the %
    $h$-transformed %
    MLE of $p_0$.  Suppose that
    Assumption~\ref{assum:transformation},\eqref{assum:enum:y0-infinite}-\eqref{assum:enum:yinfty-infinite}
    holds for $g \equiv h^{1/2}$.  Assume that $\alpha \equiv \alpha_g > 1/2$.
    Then
    \begin{equation}
      H(\widehat{p}_n , p_0) =
      O_p(n^{-2/5}).
      \label{eq:hellinger_stdrate_anysupport_UC_MC}
    \end{equation}
  \end{theorem}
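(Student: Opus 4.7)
The plan is a textbook MLE rate-of-convergence argument: localize the estimator to a subclass with controlled bracketing entropy, and then plug the bracketing bound into a standard empirical-process rate theorem. Throughout, I would work with $g \equiv h^{1/2}$ because $L_2$ distance on $\Pmh^{1/2}$ coincides, up to the factor $\sqrt{2}$, with Hellinger distance on $\Pmh$, which is the object we actually want to control.

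First I would establish that for some fixed $M < \infty$ depending only on $p_0$, the event $\{\widehat{p}_n \in \Pmh\}$ has probability tending to one. This step needs the $h$-transformed analogs of Theorem~\ref{thm:ConsistencySummary-all}\ref{enum:compacta-consistency}-\ref{enum:a.s.-boundedness}: (a) uniform convergence of $\widehat{p}_n$ to $p_0$ on compact subsets of the interior of $\dom p_0$, and (b) the a.s.\ eventual bound $\limsup_n \sup_x \widehat{p}_n(x) < \infty$. Under Assumption~\ref{assump:TrueDensityHypothesis-h} and Assumption~\ref{assum:transformation}, both are available from \cite{MR2766867}. After translating coordinates so that $[-1,1]$ lies in the interior of $\dom p_0$ (translation leaves the Hellinger distance, the MLE, and the class $\mathcal{P}_h$ invariant), statement~(a) gives a uniform positive lower bound on $\widehat{p}_n$ over $[-1,1]$ eventually, while (b) supplies the uniform upper bound. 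Choosing $M$ large enough to accommodate both puts $\widehat{p}_n$ in $\Pmh$ with probability tending to one.

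Next I would apply Theorem~\ref{thm:entropy_noncompactx_RRd} to the localized class. With $g = h^{1/2}$ satisfying Assumption~\ref{assum:transformation}\ref{assum:enum:y0-infinite}-\ref{assum:enum:yinfty-infinite} and $\alpha_g > 1/2 = 1/r$ for $r = 2$, the theorem delivers
\begin{equation*}
\log N_{[\,]}(\epsilon, \Pmh^{1/2}, L_2) \le K_{2,M,h}\, \epsilon^{-1/2}
\end{equation*}
for all sufficiently small $\epsilon > 0$. The corresponding bracketing integral then satisfies $J(\delta) \equiv \int_0^\delta \sqrt{1 + \log N_{[\,]}(u, \Pmh^{1/2}, L_2)}\, du \lesssim \delta^{3/4}$, and plugging into the standard MLE rate result (Theorem~3.4.1/3.4.4 of \cite{MR1385671}, or Theorem~7.4 of \cite{MR1739079}) the rate $\delta_n$ is determined by $J(\delta_n) \lesssim \sqrt{n}\, \delta_n^2$, giving $\delta_n \asymp n^{-2/5}$. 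Combined with the localization from Step~1 this produces $H(\widehat{p}_n, p_0) = O_p(n^{-2/5})$.

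The main obstacle I expect is the localization step: the definition of $\Pmh$ bundles together a uniform upper bound on the density and a uniform positive lower bound on the fixed compact set $[-1,1]$, and both must be inherited asymptotically by the $h$-concave MLE. The envelope of Proposition~\ref{prop:calPMOnehEnvelope} together with translation invariance of the problem is what makes this feasible without adding assumptions beyond those already in force; once the localization is established the rest is essentially mechanical, as no new concave-function estimates beyond those behind Theorems~\ref{thm:entropy_compactx} and \ref{thm:entropy_noncompactx_RRd} are required.
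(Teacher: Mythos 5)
Your overall plan — translate/rescale so $p_0$ is nice on a fixed compact set, localize the MLE into $\Pmh$, bound $\log N_{[\,]}(\epsilon, \Pmh^{1/2}, L_2)$ via Theorem~\ref{thm:entropy_noncompactx_RRd}, and invoke the standard bracketing-entropy rate theorem — is the same skeleton the paper uses, and your Step~1 closely mirrors the paper's Step~1 (affine invariance of $H$, uniform consistency on compacta, a.s.\ boundedness of $\sup_x \widehat p_n$). However, there is a genuine gap at the step where you ``plug the bracketing bound into'' Theorem~3.4.1/3.4.4 of van der Vaart and Wellner or Theorem~7.4 of van de Geer. Those rate theorems are keyed not to the entropy of $\Pmh^{1/2}$ but to the entropy of the locally-centered averaged class
\begin{equation*}
  \overline{\mathcal{P}}_h(\delta) = \bigl\{ (p+p_0)/2 \colon p \in \mathcal{P}_h,\ H\bigl((p+p_0)/2, p_0\bigr) < \delta \bigr\},
\end{equation*}
and you nowhere explain why the entropy of $\overline{\mathcal{P}}_h(\delta)$ is controlled by the entropy of $\Pmh^{1/2}$. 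Your Step~1 shows $\Pr\{\widehat p_n \in \Pmh\} \to 1$, which is a statement about the \emph{estimator}; what the rate theorem needs is a statement about the entire Hellinger-$\delta$-ball, namely $\mathcal{P}_h(4\delta) \subset \Pmh$ for $\delta$ small — and the first fact does not give the second.

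The paper closes this gap in Step~2b via Lemma~\ref{lemUniformOnCompacts}: first $N_{[\,]}(\epsilon, \overline{\mathcal{P}}_h(\delta), H) \lesssim N_{[\,]}(\epsilon, \mathcal{P}_h(4\delta), H)$ (van de Geer's (4.5)--(4.6) averaging inequalities), then the substantive inclusion $\mathcal{P}_h(4\delta) \subset \Pmh$, then $N_{[\,]}(\epsilon, \Pmh, H) = N_{[\,]}(\epsilon/\sqrt 2, \Pmh^{1/2}, L_2)$. The inclusion $\mathcal{P}_h(4\delta) \subset \Pmh$ is not automatic: it requires that Hellinger proximity to $p_0$ forces \emph{uniform} proximity on $[-1,1]$ for $h$-concave densities (the proof uses the pointwise-to-locally-uniform convergence machinery for concave functions behind Lemma~3.14 of Seregin and Wellner, essentially Rockafellar's Theorem~10.8), so that every $p$ Hellinger-close to $p_0$ inherits the two-sided bounds defining $\Pmh$. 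Without this inclusion — or an alternative argument that the rate theorem can be applied to the \emph{restricted} MLE over $\Pmh$ and that this coincides with $\widehat p_n$ w.h.p.\ (which again demands the $\overline{\mathcal P}$-to-$\mathcal P$ reduction, spelled out) — the passage from the bracketing bound to the rate $n^{-2/5}$ is not justified. You identify the localization as the ``main obstacle,'' but in the paper's accounting, the entropy reduction from the averaged local class to the bounded class is a separate and equally essential step that your sketch elides.
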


  The following corollaries connect the general
  Theorem~\ref{thm:HellingerRateTheoremFinal} with
  Theorem~\ref{thm:HellingerConvergenceRate} via
  Examples~\ref{ex:LogConcaveTransf}, ~\ref{ex:SConcaveTransf}, and
  \ref{ex:SConcaveTransfPositiveS}.

  \begin{corollary}
    \label{cor:LogConcaveConnectCor}
    Suppose that $p_0$ in Assumption~\ref{assump:TrueDensityHypothesis-h} is
    log-concave; that is, $p_0 = h_0 \circ \varphi_0$ with $h_0(y) = e^y$ as
    in Example~\ref{ex:LogConcaveTransf} and $\varphi_0$ concave.  Let
    $\widehat p_n$ be the MLE of $p_0$.  Then $H(
    \widehat{p}_n , p_0 ) = O_p (n^{-2/5})$.
  \end{corollary}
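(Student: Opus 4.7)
The plan is to deduce this corollary as an immediate application of Theorem~\ref{thm:HellingerRateTheoremFinal}, specialized to the log-concave transformation $h = h_0$ from Example~\ref{ex:LogConcaveTransf}. The only task is to verify the hypotheses of that theorem for $g \equiv h_0^{1/2}$, namely that $g$ is a concave-function transformation in the sense of Definition~\ref{defn:transformation} satisfying parts \eqref{assum:enum:y0-infinite}--\eqref{assum:enum:yinfty-infinite} of Assumption~\ref{assum:transformation} with parameter $\alpha_g > 1/2$. Once this is done, the conclusion $H(\widehat p_n, p_0) = O_p(n^{-2/5})$ follows at once.

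I will first observe that $g(y) = e^{y/2}$ is continuously differentiable and strictly increasing from $[-\infty, \infty]$ to $[0, \infty]$, with $g(-\infty) = 0$ and $g(\infty) = \infty$, so $g$ is a concave-function transformation with $\tilde y_0 = -\infty$ and $\tilde y_\infty = \infty$. Because $\tilde y_0 = -\infty$, part \eqref{assum:enum:uniflip} is vacuous; because $\tilde y_\infty = \infty$, part \eqref{assum:enum:yinfty-finite} is vacuous as well. Only \eqref{assum:enum:y0-infinite} and \eqref{assum:enum:yinfty-infinite} need to be checked.

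For \eqref{assum:enum:y0-infinite}, the derivative $g'(y) = (1/2) e^{y/2}$ decays to $0$ as $y \to -\infty$ faster than any polynomial, hence $g'(y) = o(|y|^{-(\alpha+1)})$ for every $\alpha > 1$; in particular $\alpha_g$ may be chosen to be any value greater than $1$, which certainly exceeds $1/2$. For \eqref{assum:enum:yinfty-infinite}, one computes $g(y)^{\gamma} g(-Cy) = e^{(\gamma - C)y/2}$, which tends to $0$ as $y \to \infty$ for any choice with $C > \gamma > 0$, e.g.\ $\gamma = 1$ and $C = 2$.

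With these verifications in hand, Theorem~\ref{thm:HellingerRateTheoremFinal} applies and yields the desired rate. I do not anticipate any obstacle of substance: the corollary is a routine specialization, and the rapid exponential decay of $h_0$ in both tails makes the two nontrivial tail conditions essentially automatic. The conceptual work has already been done in Theorem~\ref{thm:HellingerRateTheoremFinal} (and, earlier, in Theorem~\ref{thm:entropy_noncompactx_RRd} on bracketing entropy of $\mathcal{P}_{M,h}^{1/2}$), which is why it is worth stating the general $h$-transformed result before specializing.
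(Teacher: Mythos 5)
Your proposal is correct and takes essentially the same route as the paper, which simply notes that the corollary ``follows immediately from Theorem~\ref{thm:HellingerRateTheoremFinal}'' by citing Example~\ref{ex:LogConcaveTransf}; you just spell out the verification of Assumption~\ref{assum:transformation} for $g=h_0^{1/2}=e^{y/2}$ rather than for $h_0$ itself, which is the technically correct object to check and trivially equivalent here. (Incidentally, your condition $C>\gamma>0$ for \eqref{assum:enum:yinfty-infinite} is the right one; the paper's Example~\ref{ex:LogConcaveTransf} appears to have the inequality reversed in a typo.)
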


  \begin{corollary}
    \label{cor:SConcaveConnectCor}
    Suppose that $p_0$ in Assumption~\ref{assump:TrueDensityHypothesis-h} is
    $s$-concave with $-1<s <0$; that is, $p_0 = h_s \circ \varphi_0$ with
    $h_s(y) = (-y)^{1/s}$ for $y<0$ as in Example~\ref{ex:SConcaveTransf}
    with $-1<s <0$ and $\varphi_0$ concave.  Let $\widehat p_n$ be the MLE
    of $p_0$.  Then $H( \widehat{p}_n , p_0 ) = O_p (n^{-2/5})$.
  \end{corollary}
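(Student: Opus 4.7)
The plan is to derive Corollary~\ref{cor:SConcaveConnectCor} as a direct application of Theorem~\ref{thm:HellingerRateTheoremFinal}, taking $h = h_s$ with $-1 < s < 0$. Assumption~\ref{assump:TrueDensityHypothesis-h} becomes identical to Assumption~\ref{assump:TrueDensityHypothesis-s} in this setting, so the entire task reduces to verifying that the square-root transformation $g \equiv h^{1/2}$ satisfies parts (T.1)-(T.4) of Assumption~\ref{assum:transformation} together with the rate condition $\alpha_g > 1/2$.

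First I would compute $g$ explicitly: for $y < 0$,
\begin{equation*}
g(y) \;=\; h_s(y)^{1/2} \;=\; (-y)^{1/(2s)} \;=\; h_{2s}(y),
\end{equation*}
so $g$ is itself a concave-function transformation of the $s$-concave type with parameter $2s \in (-2,0)$, having $\tilde y_0 = -\infty$ and $\tilde y_\infty = 0$.

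Next I would check (T.1)-(T.4) for $g$, following the template of Example~\ref{ex:SConcaveTransf} essentially verbatim. Parts (T.2) and (T.4) are vacuous because $\tilde y_0 = -\infty$ and $\tilde y_\infty < \infty$, respectively. Part (T.3) holds with exponent $\beta_g = -1/(2s)$, since $g(y) = (-y)^{-\beta_g}$ in a neighborhood of $0$. For part (T.1), a direct computation gives $g'(y) = -(2s)^{-1}(-y)^{1/(2s)-1}$, and hence $g'(y) = o(|y|^{-(\alpha_g+1)})$ as $y \searrow -\infty$ for any $\alpha_g < -1/(2s)$. Because $-1 < s < 0$ forces $-1/(2s) > 1/2$, one may choose $\alpha_g$ strictly between $1/2$ and $-1/(2s)$, giving $\alpha_g > 1/2$ as required. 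Theorem~\ref{thm:HellingerRateTheoremFinal} then delivers the claimed bound $H(\widehat p_n, p_0) = O_p(n^{-2/5})$.

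There is no substantive obstacle, only bookkeeping of the exponents produced by the square-root. The one conceptual point worth flagging, and already raised in the remark preceding Theorem~\ref{thm:HellingerRateTheoremFinal}, is that the Hellinger rate is governed by $\alpha_g = -1/(2s)$ applied to $g = h^{1/2}$, rather than by $-1/s$ applied to $h$ itself; this factor of two is exactly what turns the hypothesis $\alpha_g > 1/2$ into the condition $s > -1$ on the original $s$-concave parameter and thereby covers the full range $-1 < s < 0$.
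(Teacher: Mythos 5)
Your proof is correct and matches the paper's own route: the corollary is asserted in the paper to follow immediately from Theorem~\ref{thm:HellingerRateTheoremFinal} via Example~\ref{ex:SConcaveTransf}, and you carry out exactly that verification, computing $g = h_s^{1/2} = h_{2s}$ and choosing $\alpha_g \in (1/2, -1/(2s))$, which is a nonempty interval precisely when $s > -1$. One small point worth noticing, though it is a rough edge in the paper's own exposition rather than a flaw in your argument: read literally, part (T.3) of Assumption~\ref{assum:transformation} requires $\beta > 1$, yet $\beta_g = -1/(2s)$ exceeds $1$ only for $s > -1/2$ (and likewise the $\alpha>1$ threshold in (T.1) is not met by $\alpha_g$ when $s \le -1/2$); the proof of Theorem~\ref{thm:HellingerRateTheoremFinal} actually only invokes $\beta_h = 2\beta_g > 1$ and $\alpha_g > 1/2$ (the latter being stated explicitly in the theorem and superseding the $\alpha > 1$ in (T.1)), and these hold for every $s > -1$, so the conclusion stands for the full range claimed.
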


  \begin{corollary}
    \label{cor:SConcavePositiveConnectCor}
    Suppose that $p_0$ in Assumption~\ref{assump:TrueDensityHypothesis-h} is
    $h$-concave where $h$ is a concave tranformation satisfying
    Assumption~\ref{assum:transformation}.  Suppose that $h$ satisfies $h = h_2
    \circ \Psi$ where $\Psi$ is a concave function and $h_2$ is a concave-function
    transformation such that $g \equiv h_2^{1/2}$ also satisfies
    Assumption~\ref{assum:transformation}, and such that
    $\alpha \equiv \alpha_g > 1/2$.
    Let $\widehat p_n$ be the $h$-concave MLE of $p_0$.  Then
    \begin{equation}
      H(\widehat{p}_n , p_0) = O_p(n^{-2/5}).
      \label{eq:hellinger_stdrate_anysupport_UC_MC-positive-s}
    \end{equation}
    In particular the conclusion holds for $h=h_s$ given by $h_s (y) =
    y_+^{1/s}$ with $s>0$.
  \end{corollary}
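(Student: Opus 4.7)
The plan is to reduce Corollary~\ref{cor:SConcavePositiveConnectCor} to the machinery underlying Theorem~\ref{thm:HellingerRateTheoremFinal} by exploiting the nesting in Lemma~\ref{lem:convex-ordered-classes}. Since $h = h_2 \circ \Psi$ with $\Psi \in \mathcal{C}$, that lemma gives $\mathcal{P}_h \subseteq \mathcal{P}_{h_2}$; hence $p_0 \in \mathcal{P}_{h_2}$, and the $h$-MLE $\widehat{p}_n$ lies in $\mathcal{P}_{h_2}$ as well. Because the extra constraints in \eqref{eq:DefnOfCalPMOneH} defining $\mathcal{P}_{M,h}$ involve only the density $p$ itself (an upper bound on $\sup_x p(x)$ and a lower bound on $[-1,1]$), the inclusion restricts to $\Pmh \subseteq \mathcal{P}_{M,h_2}$ for every $M$.

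Since by hypothesis $g \equiv h_2^{1/2}$ satisfies Assumption~\ref{assum:transformation} with $\alpha_g > 1/2$, I then apply Theorem~\ref{thm:entropy_noncompactx_RRd} with $r = 2$ and $h$ there replaced by $h_2$ to obtain $\log N_{[\,]}(\epsilon, \mathcal{P}_{M,h_2}^{1/2}, L_2) \lesssim \epsilon^{-1/2}$. The standard maximum-likelihood rate argument used in the proof of Theorem~\ref{thm:HellingerConvergenceRate} (via Theorem~7.4 of \cite{MR1739079} or Theorem~3.4.4 of \cite{MR1385671}) then converts this entropy bound into the rate $H(\widehat{p}_n, p_0) = O_p(n^{-2/5})$. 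That argument needs only two inputs: (i) the basic inequality $\PP_n \log(\widehat{p}_n/p_0) \ge 0$, which holds because $p_0 \in \mathcal{P}_h$ and $\widehat{p}_n$ maximizes the $h$-likelihood over $\mathcal{P}_h$; and (ii) that both $p_0$ and $\widehat{p}_n$ sit in $\mathcal{P}_{M,h_2}$ for some $M < \infty$ with probability tending to one. Crucially, $\widehat{p}_n$ need not be the MLE over the larger class $\mathcal{P}_{h_2}$ -- we only use that it lies there.

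The main obstacle is securing (ii), i.e.\ an $h$-concave analogue of Theorem~\ref{thm:ConsistencySummary-all}: existence of the $h$-MLE, Hellinger consistency, and almost-sure uniform boundedness of $\widehat{p}_n$ from above, together with a positive lower bound on a neighborhood of the mode of $p_0$. These follow by rerunning the Kiefer--Wolfowitz-type arguments of \cite{MR2459192} and \cite{MR2766867} with the envelope from Proposition~\ref{prop:calPMOnehEnvelope} replacing that of Proposition~\ref{prop:calPMdSEnvelope}; this is the most technically involved step, but it does not interact with the rate computation itself. For the final assertion about $h = h_s$ with $s > 0$, one exhibits the required factorization by taking $h_2 = h_{s'}$ with $0 < s' \le s$ and $\Psi(y) = y^{s'/s}$, which is concave on $[0,\infty)$; the requirement $\alpha_g > 1/2$ for $g = h_{s'}^{1/2}$ is unproblematic because $\tilde{y}_0 = 0$ for $h_2$ renders Assumption~\eqref{assum:enum:y0-infinite} vacuous on the negative axis, so any sufficiently large value of $\alpha_g$ may be chosen.
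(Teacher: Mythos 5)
Your reduction via Lemma~\ref{lem:convex-ordered-classes} is exactly the paper's strategy, and you correctly isolate the key subtlety that $\widehat p_n$ is the MLE only over $\mathcal P_h$ (not $\mathcal P_{h_2}$), which is all the van de Geer / van der Vaart--Wellner machinery needs once $\widehat p_n$ is known to land in the smaller class $\mathcal P_{M,h_2}$. Two points deserve correction. First, you describe securing item (ii) (consistency and a.s.\ boundedness of the $h$-MLE) as requiring one to ``rerun the Kiefer--Wolfowitz-type arguments,'' calling it the most technically involved step; but the corollary's hypothesis already assumes $h$ itself satisfies Assumption~\ref{assum:transformation}, and these conditions imply (D.1)--(D.4) of \cite{MR2766867}, so Theorems~2.17--2.18 and Lemma~3.17 of that paper apply directly to the $h$-MLE. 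No new argument is needed; this is just a citation. Second, in your final step for $h_s$, $s>0$, the choice ``$h_2 = h_{s'}$ with $0<s'\le s$'' is not sufficient in general: you also need $g = h_{s'}^{1/2} = h_{2s'}$ to satisfy Assumption~\eqref{assum:enum:uniflip}, which requires $g'(y) = (2s')^{-1} y^{1/(2s')-1}$ to stay bounded as $y \searrow 0 = \tilde y_0$, i.e.\ $s' \le 1/2$. For $s > 1/2$ one must therefore take $s' \le 1/2 < s$; the constraint $s' \le s$ alone ensures concavity of $\Psi(y) = y^{s'/s}$ but not the required regularity of $g$. (You correctly note that T.1 is vacuous here because $\tilde y_0 = 0 > -\infty$, so $\alpha_g$ imposes no further constraint.)
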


  Corollaries \ref{cor:LogConcaveConnectCor} and \ref{cor:SConcaveConnectCor}
  follow immediately from Theorem~\ref{thm:HellingerRateTheoremFinal} (see
  Examples \ref{ex:LogConcaveTransf} and ~\ref{ex:SConcaveTransf}).  However
  Corollary~\ref{cor:SConcavePositiveConnectCor} requires an additional
  argument (given in the proofs section). Together, these three corollaries
  yield Theorem~\ref{thm:HellingerConvergenceRate} in the main document.

  Theorem~\ref{thm:HellingerRateTheoremFinal}  has further corollaries, for example
  via Example~\ref{ex:HybridExample}.

\section{Summary, further problems, and prospects}
\label{sec:SummaryProblemsProspects}

In this paper we have shown that the MLE's of $s-$concave densities on $\RR$ have Hellinger convergence rates
of $n^{-2/5}$ for all $s>-1$ and that the MLE does not exist for $s<-1$.
Our bracketing entropy bounds explicitly quantify the growth
of these classes as $s \searrow -1$ and are of independent interest in the study of
convergence rates for other possible estimation methods.   In the rest of this section we briefly discuss
some further problems.

\subsection{Behavior of the constants in our bounds}

It can be seen from the proof of 
Theorem~\ref{thm:entropy_noncompactx_RRd} that the constants in our entropy
bounds diverge to $+\infty$ as $\alpha = \alpha_g \searrow 1/r$.  When
translated to Theorem~\ref{thm:BracketEntropyBound} and $r=2$ this occurs as
$(-1/(2s)) \searrow 1/2$.  It would be of interest to establish lower bounds
for these entropy numbers with the same property.  On the other hand, when
$r=2$ and $s= -1/2$, the constant $K_{r,\alpha}$ in the proof of
Theorem~\ref{thm:entropy_noncompactx_RRd}
becomes very reasonable: $K_{2,1} = M^{1/5} (4 M + 2)^{1/5} + 16
(2D^{1/2}M/L)^{2/5}$ where $M, D$, and $L$ are the constants in the envelope
function $p_{u,h}$ of Proposition~\ref{prop:calPMOnehEnvelope}.  Note that
the constant $ \tilde K_{r,\alpha} $ from Theorem~\ref{thm:entropy_compactx}
arises as a factor in the constant for
Theorem~\ref{thm:entropy_noncompactx_RRd}, but from the proof of
Theorem~\ref{thm:entropy_compactx} it can be seen that unless $\alpha
\searrow 0$, $\tilde K_{r,\alpha}$ stays bounded. 



\subsection{Alternatives to Maximum likelihood}
As noted by
\cite{MR2722462}, page 2999,  %
there are great algorithmic advantages in adapting the method of estimation to the particular
class of shape constraints involved, thereby achieving a convex optimization problem with a tractable
computational strategy.  In particular,
\cite{MR2722462} %
showed how R\'{e}nyi divergence methods are well-adapted
to the $s-$concave classes in this regard.
As has become clear through the work of
\cite{Han-Wellner:2015},  %
there are further advantages in terms of robustness and stability properties of the alternative
estimation procedures obtained in this way.
\subsection{Rates of convergence for nonparametric estimators, $d\ge 2$}
Here we have provided global rate results for the MLEs over ${\cal P}_{1,s}$
with respect to the Hellinger metric.  Global rate results are still lacking
for the classes ${\cal P}_{d,s}$ on $\RR^d$ with $d \ge 2$.
\cite{Kim:2014wa} provides interesting and important minimax lower bounds for
squared Hellinger risks for the classes ${\cal P}_{d,0}$ with $d \ge 1$, and
their lower bounds apply to the classes ${\cal P}_{d,s}$ as well in view of
the nesting properties in (\ref{sConcaveNestingProperty}) and Lemma 4.1.
Establishment of comparable upper bounds for $d\ge 2$ remains an active area
of research.

%

%
%
\subsection{Rates of convergence for the R\'{e}nyi divergence estimators}
Although global rates of convergence of the R\'{e}nyi divergence estimators of \cite{MR2722462}
have not yet been established even for $d=1$, we believe that the bracketing entropy bounds obtained
here will be useful in establishing such rates.  The results of
\cite{Han-Wellner:2015} provide some useful starting points in this regard.
\subsection{Global rates of convergence for density estimation in $L_1$}
Rates of convergence with respect to the $L_1$ metric
for MLE's for the classes ${\cal P}_{d,0}$ and ${\cal P}_{d,s}$ with $d\ge 2$ and $s<0$
are not yet available.  At present, further tools seem to be needed.
\subsection{Rate efficient estimators when $d\ge 3$} It has become
increasingly clear that nonparametric estimators based on minimum contrast
methods (either MLE or minimum R\'{e}nyi divergence) for the classes
${\cal P}_{d,s}$ with $d\ge 3$ will be rate inefficient.
This modified form of the conjecture of \cite{MR2766867}, section 2.6, page 3762, accounts
for the fact pointed out by \cite{Kim:2014wa} 
that the classes ${\cal P}_{d,s}$ with $-1/d< s \le 0$
contain all uniform densities on compact convex subsets of $\RR^d$, and these
densities have Hellinger entropies of order $\epsilon^{-(d-1)}$.
Hence alternative
procedures based on sieves or penalization will be required to achieve
optimal rates of convergence.  Although these problems have not yet been
pursued in the context of log-concave and $s-$concave densities, there is
related work by \cite{MR3014311},  %
in a closely related problem involving estimation of the support functions of
convex sets.

  \section{Main Results: Proofs}
  \label{sec:mainresults-proofs}

  This section contains the proofs of the main results. %
  \begin{proof}[Proof of Proposition~\ref{prop:MLEnonexistence}]
    Let $s<-1$ and set $r \equiv -1/s < 1$.  Consider the family of convex functions $\{ \varphi_a \}$
    given by
    $$
    \varphi_a(x) = a^{-1/r} (b_r -ax) 1_{[0,b_r/a]} (x)
    $$
    where $b_r \equiv (1-r)^{1/(1-r)}$ and $a>0$.
    Then $\varphi_a$ is convex and
    $$
    p_a (x) \equiv \varphi_a (x)^{1/s} = \varphi_a (x)^{-r} = \frac{a}{(b_r-ax)^r} 1_{[0,b_r/a)} (x)
    $$
    is a density.
    The log-likelihood is given by
    \begin{eqnarray*}
      \ell_n (a) & = & \log L_n (a) = \log \prod_{i=1}^n p_a (X_i)
      =  \sum_{i=1}^n \{ \log a - r \log (b_r - a X_i ) \}
    \end{eqnarray*}
    on the set $X_i < b_r/a$ for all $i \le n$ and hence for $a <
    b_r/X_{(n)}$ where $X_{(n)} \equiv \max_{1 \le i \le n} X_i$.  Note that
    $\ell_n (a) \nearrow \infty$ as $a \nearrow b_r/X_{(n)}$.  Hence the MLE
    does note exist for $\{ p_a : a > 0 \}$, and a fortiori the MLE does not
    exist for $\{ p : \ p \in {\cal P}_{1,s} \}$ with $s< -1$.
  \end{proof}

  \begin{proof}[Proof of Proposition~\ref{prop:GSextension}]
    The proof consists mostly of noticing that Theorem 3.1 in
    \cite{Gunt-Sen:12} essentially yields the result stated here; the
    difference in the statements is that we use $L_r$ bracketing entropy
    whereas they use $L_r$ metric entropy.  For the details of the proof, see
    Section~\ref{sec:Appendix1}.
  \end{proof}

  To prove Theorem~\ref{thm:entropy_compactx}, we discretize the domains and
  the range of the concave-transformed functions.  We define a sequence of
  values $y_\gamma$ that discretize the range of the concave functions.  As
  $|y_{\gamma}|$ get large, $h(y_{\gamma})$ get small, so we can define
  brackets of increasing size.  The increasing size of the brackets will be
  governed by the values of $\epsilon_\gamma^B$ in the proof.  We also have
  to discretize the domain of the functions to allow for regions where the
  concave-transformed functions can become $0$ (which corresponds to concave
  functions becoming infinite, and which thus cannot be bracketed at the
  concave level).  The sizes of the discretization of the domain
  corresponding to each level $y_\gamma$ is governed by the values of
  $\epsilon_\gamma^S$ in the proof.

  \begin{proof}[Proof of Theorem~\ref{thm:entropy_compactx}]

    First note that the $L_r$ bracketing numbers scale in the following
    fashion.
    For a function $f$
    supported on a subset of $[b_1,b_2]$ and with $|f|$ bounded by $B$, we
    can define a scaled and translated version of $f$,
    \begin{equation*}
      \tilde f(x) := \frac{f(b_1  + (b_2-b_1) x) }{B},
    \end{equation*}
    which is supported on a subset of $[0,1]$ and bounded by $1$.  Then
    \begin{equation*}
      B^r \int_{[0,1]} \left\vert \tilde{f}(x) - \tilde{g}(x) \right\vert^r
      dx
      = \inv{(b_2-b_1)} \int_{[b_1,b_2]} \left\vert f(x) - g(x)
      \right\vert^r dx.
    \end{equation*}
    Thus a class of $\epsilon$-sized $L_r$ brackets when $b_1=0$, $b_2=1$, and
    $B=1$ scales to be a class of $\epsilon(b_2-b_1)^{1/r} B $ brackets for
    general $b_1, b_2$, and $B$.  Thus, for the remainder of the proof we take
    $b_1=0$, $b_2=1$, and $B=1$.
    By replacing $h$ by a translation of $h$ (since concave functions plus a
    constant are still concave), and using the fact that the range of $h$ is
    $(0,\infty)$, we assume that $h^{-1}(1) < 0$.

    We will shortly define a sequence of epsilons, $\epsilon_\gamma^B$
    and
    $\epsilon_\gamma^S$, depending on $\epsilon$.
    We will need     $\epsilon_\gamma^S \le 1$ for all $\gamma$.
    Thus
    we will  later specify a constant $\epsilon^*$
    such that $\epsilon \le \epsilon^*$ guarantees $\epsilon_\gamma^S \le 1$.

    We will consider the cases $\tilde{y}_0 = -\infty$ and $\tilde{y}_0 >
    -\infty$ separately; the former case is more difficult, so let us begin
    by assuming that $\tilde{y}_0 = -\infty$.  Let $y_\gamma = -2^{\gamma}$
    for $\gamma =1, \ldots, k_\epsilon \equiv \left \lfloor \log_2 h^{-1}
      (\epsilon) \right \rfloor$.
    The $y_\gamma$'s discretize the range of possible values a concave
    function takes.  We let $\epsilon_\gamma^B = \epsilon
    (-y_{\gamma-1})^{(\alpha+1)\zeta}$ and $\epsilon_\gamma^S = \epsilon^r
    (-y_{\gamma-1})^{r\alpha \zeta}$, where we choose $\zeta$ to satisfy
    $1 > \zeta > 1/(\alpha+1)$.

    We start by discretizing the support $[0,1]$.  At each level $\gamma =
    1, \ldots, k_\epsilon$, we use $\epsilon_\gamma^S$ to discretize the
    support into intervals on which a concave function can cross below
    $y_\gamma$.

    We place $\ceil*{2/ \epsilon_{\gamma}^S}$ points $a_l$ in in $[0,1]$,
    $l=1, \ldots, \ceil*{2 / \epsilon_{\gamma}^S}$, such that $0 <
    a_{l+1}-a_l<\epsilon_\gamma^S/2 $,
    $l=0, \ldots, \ceil*{2 / \epsilon_{\gamma}^S}$
    taking $a_{l_0}=0$ and
    $a_{l_{\ceil*{2/\epsilon_\gamma^S}+1}} = 1$.  There are $ N_\gamma^S
    \equiv { \ceil*{2/ \epsilon_\gamma^S} \choose 2 }$ pairs of the points,
    and for each pair $(l_1,l_2)$ we define a pair of intervals,
    $\IL{i}{\gamma}$ and $\IU{i}{\gamma}$ by
    \begin{equation*}
      \IL{i}{\gamma} = [a_{l_1}, a_{l_2}] \mbox{ and }
      \IU{i}{\gamma} = [a_{l_1-1}, a_{l_2 +1}],
    \end{equation*}
    for $i = 1 , \ldots, N_\gamma^S$.
    We see that
    $\log N_\gamma^S \le 4 \log( 1/ \epsilon_\gamma^S) $, that
    $\lambda( \IU{i}{\gamma} \setminus \IL{i}{\gamma}) \le \epsilon_\gamma^S$
    and that for each $\gamma$, for all intervals $I \subset [0,1]$ (i.e.,
    for all possible domains $I$ of a concave function $\vp \in \cC[ [0,1],
    [-1,1]]$), there exists $1 \le i \le N_\gamma^S$ such that
    $\AL{i}{\gamma} \subseteq I \subseteq \AU{i}{\gamma}$.

    Now,
    we can apply
    Proposition~\ref{prop:GSextension}
    so for each $\gamma = 1, \ldots,
    k_\epsilon$
    we can pick brackets $[\ll{\alpha}{i}{\gamma}(x),
    \uu{\alpha}{i}{\gamma}(x)]$ for $\mathcal{C}(\AL{i}{\gamma}, [y_\gamma,
    y_0])$ with $\alpha = 1,\ldots, \NB{\gamma} = \left\lfloor \exp(C (
      |y_\gamma| / \epsilon_\gamma^B )^{1/2})\right\rfloor$ (since $y_0 \le
    |y_\gamma|$) and $L_r( \ll{\alpha}{i}{\gamma}, \uu{\alpha}{i}{\gamma} )
    \le \epsilon_\gamma^B$.
    Note that by Lemma~\ref{lem:h-inverse},
     $k_\epsilon \le \log_2 M \epsilon^{-1/\alpha}$ for some $M
     \ge 1$, so
    we see that
    \begin{equation*}
      \epsilon_\gamma^S
      \le \epsilon^{ (1-\zeta)r} \lp
      \frac{M}{2}\rp^{r \alpha \zeta},
    \end{equation*}
    and thus taking $\epsilon^* \equiv (2/M)^{\alpha \zeta / (1-\zeta)}$ 
    the above display is bounded above by $1$ for all $\epsilon \le
    \epsilon^*$, as needed.

    Now we can define the brackets for
    $\mathcal{F}( \mathcal{I}[0,1],
    [0,1])$. %
    For multi-indices $\ibf = (i_1, \ldots, i_{k_\epsilon})$ and
    $\abf = \lp \alpha_1, \ldots, \alpha_{k_\epsilon} \rp$, we define
    brackets $[\fuia, \flia]$ by
    \begin{equation*}
      \begin{split}
        \fuia(x) &  = \sum_{\gamma =1 }^{k_\epsilon} \bigg( h\lp
        \uu{\alpha_\gamma}{i_\gamma}{\gamma}(x) \rp
        \mathbbm{1}_{\lb x \in \AL{i_\gamma}{\gamma} \setminus \cup_{j=1}^{\gamma-1}
          \AU{i_j}{j} \rb } \\
        & \quad + h(y_{\gamma-1}) \mathbbm{1}_{\lb x \in \AU{i_\gamma}{\gamma} \setminus \lp
          \cup_{j=1}^\gamma \AL{i_j}{j} \cup_{j=1}^{\gamma-1} \AU{i_j}{j}
          \rp \rb } \bigg)
        + \epsilon \mathbbm{1}_{ \lb x \in [0,1] \setminus
          \cup_{j=1}^{\gamma} \AU{i_j}{j} \rb }, \\
        \flia(x) & = \sum_{\gamma=1}^{k_\epsilon} h\lp
        \ll{\alpha_\gamma}{i_\gamma}{\gamma}(x) \rp
        \mathbbm{1}_{\lb x \in \AL{i_\gamma}{\gamma} \setminus
          \cup_{j=1}^{\gamma-1} \AU{i_j}{j} \rb }.
      \end{split}
    \end{equation*}
    Figure~\ref{fig:bracket-join} gives a plot of $ [\flia,        \fuia]$.
    For $x \in \AL{i_\gamma}{\gamma} \setminus \cup^{\gamma-1}_{j=1}
    \AU{i_j}{j}$, we can assume that $y_\gamma \le
    \uu{i_\gamma}{\alpha_\gamma}{\gamma}(x) \le y_{\gamma-1}$ by replacing
    $\uu{i_\gamma}{\alpha_\gamma}{\gamma}(x)$ by
    $(\uu{i_\gamma}{\alpha_\gamma}{\gamma}(x) \wedge y_{\gamma-1}) \vee
    y_{\gamma}$. We do the same for
    $\ll{i_\gamma}{\alpha_\gamma}{\gamma}(x)$.

    We will check that these do indeed define a set of bracketing functions
    for $\mathcal{F}(\mathcal{I}[0,1], [0,1])$ by considering separately
    the different domains on which $\fuia$ and $\flia$ are defined.  We take
    any $h( \vp) \in \mathcal{F}(\mathcal{I}[0,1], [0,1])$, and then for
    $\gamma = 1, \ldots, k_\epsilon$, we can find $\AL{i_\gamma}{\gamma}
    \subseteq \dom (\vp \vert^{[y_\gamma, \infty)}) \subseteq
    \AU{i_\gamma}{\gamma}$ for some $i_\gamma \le \NS{\gamma}$.  So, in
    particular,
    \begin{equation}
      \label{eq:bracket-dom-prop}
      \vp(x) <        y_\gamma \mbox{ for } x \notin \AU{i_\gamma}{\gamma},
      \mbox{ and }  y_\gamma \le \vp(x) \mbox{ for } x \in \AL{i_\gamma}{\gamma}.
    \end{equation}
    Thus, there is an $\alpha_\gamma$ such that
    $\ll{\alpha_\gamma}{i_\gamma}{\gamma}$ and
    $\uu{\alpha_\gamma}{i_\gamma}{\gamma}$ have the bracketing property for
    $\vp$ on $\AL{i_\gamma}{\gamma}$, by which we mean that for $x \in
    \AL{i_\gamma}{\gamma}$, $\ll{\alpha_\gamma}{i_\gamma}{\gamma}(x) \le
    \vp(x) \le \uu{\alpha_\gamma}{i_\gamma}{\gamma}(x)$.  Thus on the sets
    $\AL{i_\gamma}{\gamma} \setminus \cup^{\gamma-1}_{j=1} \AU{i_j}{j}$,
    the functions $\fuia$ and $\flia$ have the bracketing property for
    $h(\vp)$.  Now, $\flia$ is $0$ everywhere else and so is everywhere
    below $h(\vp)$.  $\fuia$ is everywhere above $h(\vp)$ because for $x
    \in \lp\cup_{j=1}^{\gamma-1} \AU{i_j}{j} \rp^c$, we know $h(\vp(x)) \le
    h(y_{\gamma-1})$ by \eqref{eq:bracket-dom-prop}. It just remains to
    check that $\fuia(x) \ge h(\vp(x)) $ for $x \in [0,1] \setminus
    \cup_{j=1}^{\gamma} \AU{i_j}{j} $, and this follows by the definition
    of $k_\epsilon$ which ensures that $h(y_{k_\epsilon}) \le
    \epsilon$ and from \eqref{eq:bracket-dom-prop}. Thus
    $[\flia, \fuia]$ are indeed brackets for $\mathcal{F}(
    \mathcal{I}[0,1], [0,1])$.

    Next we compute the size of these brackets.  We have that
    $L_r^r(\fuia,\flia)$ is
    \begin{align*}
      \int \lp \fuia - \flia \rp^r d\lambda
      & \le \sum_{\gamma=1}^{k_\epsilon} \int_{ \AL{i_\gamma}{\gamma}
        \setminus \AU{i_{\gamma-1}}{\gamma-1} }
      \lp
      h(\uu{\alpha_\gamma}{i_\gamma}{\gamma})
      - h(\ll{\alpha_\gamma}{i_\gamma}{\gamma}) \rp^r d\lambda  \\
      & \quad + \int_{\AU{i_\gamma}{\gamma} \setminus
        \AL{i_{\gamma}}{\gamma}}
      h(y_{\gamma-1})^r d\lambda
      + \epsilon^r \\
      & \le \sum_{\gamma=1}^{k_\epsilon} \sup_{y \in [y_\gamma,
        y_{\gamma-1}]} h'(y)^r \int_{\AL{i_\gamma}{\gamma} \setminus
        \AU{i_{\gamma-1}}{\gamma-1}}
      \lp \uu{\alpha_\gamma}{i_\gamma}{\gamma} -
      \ll{\alpha_\gamma}{i_\gamma}{\gamma} \rp^r d\lambda \\
      & \quad + \sum^{k_\epsilon}_{\gamma=1} h(y_{\gamma-1})^r
      \epsilon^S_\gamma
      + \epsilon^r,
    \end{align*}
    since we specified the brackets to take values in $[y_\gamma,
    y_{\gamma-1}]$ on $\AL{i_\gamma}{\gamma} \setminus
    \AU{i_{\gamma-1}}{\gamma-1}$.  By
    our assumption that $h'(y) = o( |y|^{-(\alpha+1)})$ (so, additionally,
    $h(y) = o( |y|^{-\alpha})$) as $y \to -\infty$, and the definition of
    $\epsilon^B_\gamma$, the above display is bounded above by
    \begin{align*}
      \epsilon^r
      + \sum^{k_\epsilon}_{\gamma=1}  (-y_{\gamma-1})^{-(\alpha+1)r} \epsilon^r
      (-y_{\gamma-1})^{(\alpha+1)\zeta r} + \epsilon^r (-y_{\gamma-1})^{-\alpha r
        (1-\zeta)}
      \le \tilde{C}_1 \epsilon^r
    \end{align*}
    since $\alpha r (1- \zeta) $ and $(\alpha+1) r (1-\zeta)$ are both
    positive, where $\tilde{C}_1 = (1 + 2 / (1-2^{-\alpha r (1-\zeta)}))$.

    Finally, we can see that the log-cardinality of our set of bracketing
    functions, $\log \prod_{\gamma=1}^{k_\epsilon} \NB{\gamma}
    \NS{\gamma}$, is
    \begin{equation}
      \label{eq:log-cardinality}
      \sum_{\gamma=1}^{k_\epsilon}
      C \lp \frac{|y_\gamma|}{\epsilon^B_\gamma} \rp^{1/2}
      +
      4      \log \lp  \inv{ \epsilon^S_\gamma} \rp,
    \end{equation}
    with $C$ from Proposition~\ref{prop:GSextension}.
    The above display is bounded above by
    \begin{equation*}
      \begin{split}
        \MoveEqLeft
        C \sum_{\gamma=1}^{k_\epsilon}  \frac{2^{\gamma/2}}{\epsilon^{1/2}}
        2^{-(\gamma-1)(\alpha+1) \zeta/2}
        + 4  \log %
        \lp \epsilon^{-r} (-y_{\gamma-1})^{-r \alpha \zeta} \rp   \\
        &          \le
        \lp C \vee 4 \rp
        \lp  
        \sum_{\gamma=0}^\infty
        \frac{2^{-((\alpha+1)\zeta -1)\gamma/2 + 1/2}}{\epsilon^{1/2}}
        +  \sum_{\gamma=0}^\infty  %
        \frac{(-y_{\gamma})^{-\alpha \zeta / 2}}{\epsilon^{1/2}} \rp.
      \end{split}
    \end{equation*}
    Since $ (\alpha+1) \zeta - 1  > 0$,
    the above display is finite and can be bounded by $\tilde C_2
    \epsilon^{-1/2}$ where $\tilde C_2 = (C \vee 4) \lp \frac{2^{3/2}}{1 -
      2^{- (\frac{\alpha+1}{4}-\inv{2})}} \vee \frac{2}{1-2^{-\alpha/4}}
    \rp$.
    We have now shown,  for $\tilde{y}_0 = -\infty$ and $\epsilon
    \le \epsilon^*$ that 
    \begin{equation*}
      \log N_{[\, ]} \lp \epsilon \tilde C_1^{1/r}, \mathcal{F}\lp
      \mathcal{I}[0,1], [0,1]\rp , L_r\rp 
      \le \tilde C_2 \epsilon^{-1/2}
    \end{equation*}
    or for $\varepsilon \le \tilde C_1^{1/r} \epsilon^*$, 
    \begin{equation*}
      \log N_{[\, ]} \lp \varepsilon , \mathcal{F}\lp
      \mathcal{I}[0,1], [0,1]\rp , L_r\rp 
      \le  \tilde K_{r,h}  \varepsilon^{-1/2},
    \end{equation*}
    with $\tilde K_{r,h} \equiv \tilde C_1^{1/(2r)}\tilde C_2$.  We mention
    how to extend to all $\epsilon > 0$ at the end.

    \smallskip

    Now let us consider the simpler case, $\tilde{y}_0>-\infty$.  Here we
    take $k_\epsilon = 1$, $y_0 = h^{-1}(1) < 0$, and $y_1 = h^{-1}(0) =
    \tilde y_0$.  Then we define $\epsilon^B = \epsilon$, take $\epsilon^*
    \le 1$, and $\epsilon^S=
    \epsilon^r \le \epsilon^*$ and we define $\AU{i}{\gamma}$, $\AL{i}{\gamma}$,
    $N_\gamma^S$, $[\ll{\alpha}{i}{\gamma}, \uu{\alpha}{i}{\gamma}]$, and
    $N_\gamma^B$ as before, except we will subsequently drop the $\gamma$
    subscript since it only takes one value.  We can define brackets
    $[f^L_{i,\alpha}, f^U_{i,\alpha}]$ by
    \begin{equation*}
      \begin{split}
        f^U_{i,\alpha}(x) & = h \lp \uu{\alpha}{i}{}(x) \rp
        \mathbbm{1}_{A_i^L}(x)
        + h(y_0) \mathbbm{1}_{A_i^U \setminus A_i^L}(x) \\
        f^L_{i,\alpha}(x) & = h\lp \ll{\alpha}{i}{}(x) \rp \mathbbm{1}_{A^L_i}(x).
      \end{split}
    \end{equation*}
    Their size, $L_r^r(f^U_{i,\alpha}, f^L_{i,\alpha})$ is bounded above by
    \begin{equation*}
      \sup_{y \in [y_1,y_0]} h'(y)^r \int_{A^L_{i}} \lp
      \uu{\alpha}{i}{}-  \ll{\alpha}{i}{} \rp^r d\lambda
      + h(y_0)^r \int_{A^U_{i} \setminus A^L_{i} } d\lambda
      \le M^r \epsilon^r
      + h(y_0)^r \epsilon^r
    \end{equation*}
    for some $0 < M < \infty$ by Assumption~\ref{assum:enum:uniflip}.  Thus
    the bracket size is of order $\epsilon$, as desired. The log cardinality
    $\log N^B N^S$ is
    \begin{equation*}
      C \lp \frac{|y_1|}{\epsilon}  \rp^{1/2}
      + 4 \log (\epsilon^{-r}).
    \end{equation*}
    Thus, we get the same conclusion as in the case $\tilde y_0 = -\infty$,
    and we have completed the proof for $\epsilon < \epsilon^*$.

    When either $\tilde y_0 = -\infty$ or $\tilde y_0 > -\infty$, we have
    proved the theorem when $0 < \epsilon \le \epsilon^*$.  The result can be
    extended to apply to any $\epsilon>0$ in a manner identical to the
    extension at the end of the proof of Proposition~\ref{prop:GSextension}.
  \end{proof}

  \begin{proof}[Proof of Proposition~\ref{prop:calPMOnehEnvelope}]

    First we find an envelope for the class $\mathcal{P}_{M,h}$ with $
    \alpha_h > 1$.  For $x \in [-(2M+1),2M+1]$, the envelope is trivial.
    Thus, let $x \ge 2M+1$.  The argument for $x \le -(2M+1)$ is symmetric.
    We show the envelope holds by considering two cases for $p =
    h\circ\varphi \in \Pmh$.  Let $R\equiv \dom \varphi \cap [1, \infty)$.
    First consider the case
    \begin{equation}
      \label{eq:case2}
      \inf_{x \in R} p(x) \le    1/(2M).
    \end{equation}
    We pick $x_1 \in R$ such that $p(x_1) = h(\varphi(x_1)) = 1/(2M)$ and
    such that
    \begin{equation}
      \label{eq:L-bound}
      \varphi(0) - \varphi(x_1) \ge   h^{-1}(M^{-1}) - h^{-1}(M^{-1}/2) \equiv L  > 0.
    \end{equation}
    This is possible since $\varphi(0) \ge h^{-1}(M^{-1})$ by the
    definition of $ {\cal P}_{1,M,h}$  and by  our choice of $x_1$
    (and by the fact that $\dom \varphi$ is closed, so that we attain equality
    in \eqref{eq:case2}).

    If $p(z) \ge 1/(2M)$, then concavity of $\vp$ means $p \ge 1/(2M)$ on
    $[0,z]$ and since $p$ integrates to $1$, we have $z \le 2M$.  Thus $x_1 \le
    2M$. Fix $x > 2M+1
    \ge x_1 > 0$, which (by concavity of $\varphi$) means $\varphi(0) >
    \varphi(x_1) > \varphi(x)$.  We will use
    Proposition~\ref{propGeneralHconcaveUpperBounds} with $x_0=0$ and $x_1$ and
    $x$ as just defined.  Also, assume $\varphi(x) > -\infty$, since otherwise
    any $0<D, L < \infty$ suffice for our bound.  Then, we can apply
    \eqref{eq:f-bound-1} to see
    \begin{equation}
      p(x) \le   h\left(
        \varphi(0) - h(\varphi(x_1)) \frac{\varphi(0) - \varphi(x_1)}{F(x)-F(0)} x
      \right).
      \label{eq:f-bound-2}
    \end{equation}
    Since $(F(x) - F(0))^{-1} \ge 1$ (since $\alpha > 1$),
    \eqref{eq:f-bound-2} is bounded above by
    \begin{equation}
      h\left(
        h^{-1}(M) - \frac{L}{2M }  x
      \right) < \infty.
      \label{eq:h-simple}
    \end{equation}
    We can assume $h^{-1}(M) = -1$ without loss of generality.  This is
    because, given an arbitrary $h$, we let $h_M(y) = h(y + 1 + h^{-1}(M))$
    which satisfies $h_M^{-1}(M)=-1$.  Note that $\Pmh = \mathcal{P}_{M,h_{M}}$
    since translating $h$ does not change the class $\mathcal{P}_h$ or $\Pmh$.
    Thus, if \eqref{eq:DefnOfCalPMOnehEnvelope} holds for all $p \in
    \mathcal{P}_{M,h_{M}}$ then it holds for all $p \in \Pmh$.  So without loss
    of generality, we assume $h^{-1}(M)=-1$. Then \eqref{eq:h-simple} is equal
    to
    \begin{equation}
      \label{eq:1}
      h\left(
        -1 - \frac{L}{2M }  x
      \right)
      <\infty.
    \end{equation}
    Now, $h(y) = o(|y|^{-\alpha})$ as $y \to -\infty$, which implies that $h(y) \le D
    (-y)^{-\alpha}$ on $(-\infty, -1]$ for a constant $D$ that depends only on
    $h$ and on $M$, %
    since $-1 - (L/(2M)) x \le -1 $.  Thus, \eqref{eq:h-simple} is bounded
    above by
    \begin{equation}
      \label{eq:final}
      D \left( 1 + \frac{L}{2M } x
      \right)^{-\alpha}.
    \end{equation}

    We have thus found an envelope for the case wherein \eqref{eq:case2}
    holds and when $x \ge 2M+1$.  The case $x \le -(2M+1)$ is symmetric.

    Now consider the case where $p$ satisfies
    \begin{equation}
      \label{eq:inf-f}
      \inf_{x \in R} p(x) \ge 1/(2M).
    \end{equation}
    As argued earlier, if $p(z) \ge 1/(2M)$, then concavity of $\vp$ means $p
    \ge 1/(2M)$ on $[0,z]$ and since $p$ integrates to $1$, we have $z \le
    2M$.  So, when \eqref{eq:inf-f} holds, it follows that $p(z) = 0$ for $z
    > 2M$.  We have thus shown 
    $p \le p_{u,h}$ (with $p_{u,h}$ defined in
    \eqref{eq:DefnOfCalPMOnehEnvelope}).  
    For $q \equiv p^{1/2} \in \mathcal{P}_{M,h}^{1/2}$, it is now immediate
    that $q \le p_{u,h}^{1/2}$.
  \end{proof}

  To prove Theorem~\ref{thm:entropy_noncompactx_RRd}, we partition $\RR$ into
  intervals, and on each interval we apply
  Theorem~\ref{thm:entropy_compactx}.  The envelope from
  Proposition~\ref{prop:calPMOnehEnvelope} gives a uniform bound on the
  heights of the functions in $\Pmh^{1/2}$, which allows us to control the
  cardinality of the brackets given by Theorem~\ref{thm:entropy_compactx}.

\begin{center}
  \begin{figure}
    \caption[Bracketing Construction]{
      Theorem~\ref{thm:entropy_compactx}: Bracketing of a concave function
      $\vp$ (rather than $h(\vp)$).  Here $\AL{i_\gamma}{\gamma} = [a_{l_1},
      a_{l_2}]$ and $\AU{i_\gamma}{\gamma} = [a_{l_1-1}, a_{l_2+1}]$, and the
      right boundary of the domain of $\vp $ lies between $a_{l_2}$ and
      $a_{l_2+1}$.  We focus on the right side, near $a_{l_2}$ and
      $a_{l_2+1}$.  In the top plot is a bracket on the domain
      $\cup_{j=1}^{\gamma-1} \AU{i_j}{j}$ (which we let have right endpoint
      $b$ here) and the range $[y_{\gamma-1},y_0]$ (below which $\vp$ is
      greyed out).  The next plot shows an application of
      Proposition~\ref{prop:GSextension} to find a bracket on
      $\AL{i_\gamma}{\gamma}$.  The final plot shows the combination of the
      two.  %
    }
    \includegraphics[scale=.55]{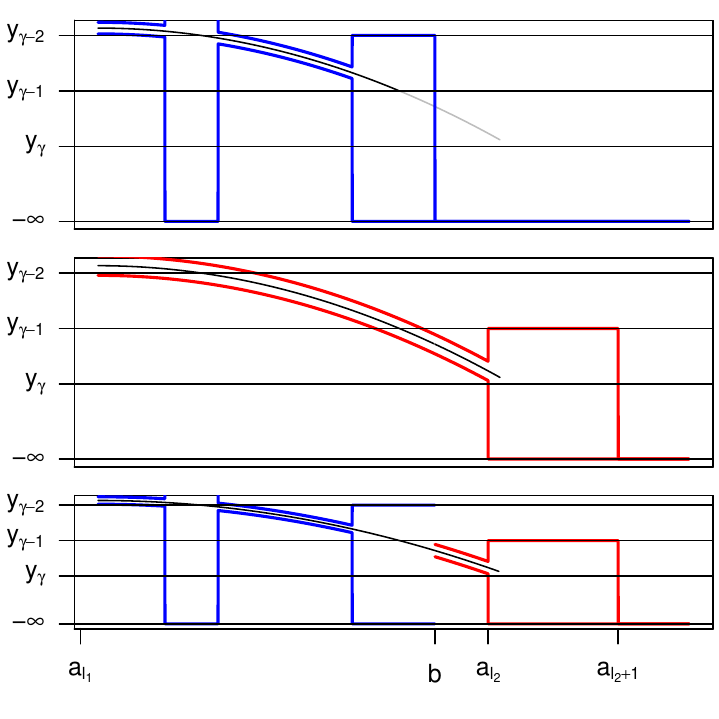}
    \label{fig:bracket-join}
  \end{figure}
  \end{center}

  \begin{proof}[Proof of Theorem~\ref{thm:entropy_noncompactx_RRd}]

    We will use the method of Corollary~2.7.4 of \cite{MR1385671} %
    for combining brackets on a partition of $\RR$, together with
    Theorem~\ref{thm:entropy_compactx}.  Let $I_0 = [-(2M+1),2M+1]$; for $ i
    > 0$ let $I_i = [i^\gamma, (i+1)^\gamma] \setminus I_0$, and for $i < 0 $
    let $I_i = [ -|i-1|^\gamma, -|i|^\gamma] \setminus I_0$.
    Let $A_0 = M^{1/2} (4M+2)^{1/r} $ and $A_i = D^{1/2} \lp 1 + |i|^\gamma
    L/(2M)\rp^{-\alpha} \lp (i+1)^\gamma - i^\gamma \rp^{1/r} $ where $\alpha
    \equiv \alpha_{h^{1/2}}$ (so by Lemma~\ref{lem:transform-basic-facts}
    $\alpha_h = 2\alpha_{h^{1/2}} > 1$) for $|i| > 0$, and with $D,L$ as
    defined in Proposition~\ref{prop:calPMOnehEnvelope}, which will
    correspond to $B(b_2-b_1)^{1/r}$ in Theorem~\ref{thm:entropy_compactx}
    for $\mathcal{P}^{1/2}_{M,h}$
    restricted to $I_i$.  For $i \in \ZZ$, let $a_i = A_i^{\beta} $
    where we will pick $\beta \in (0,1)$ later.
    Fix $\epsilon > 0$.  We will  apply Theorem~\ref{thm:entropy_compactx}
    to yield $L_r$ brackets of size $\epsilon a_{i}$ for the restriction of
    ${\cal P}_{M,h}^{1/2}$ to each interval $I_{i}$.
    %
    %
    %
    %
    %
    For $i \in \ZZ$
    we apply
    Theorem~\ref{thm:entropy_compactx} and form $\epsilon a_i$ brackets,
    which we denote by $[f^L_{i, j}, f^U_{i,j} ]$ for $j=1,\ldots, \NB{i}$,
    for the restriction of ${\cal P}_{M,h}^{1/2}$ to $I_i$. We will bound
    $\NB{i}$ later.
    %
    %
    %
    %
    %
    We have thus formed a collection of brackets for ${\cal P}_{M,h}^{1/2}$
    by
    \begin{equation*}
      \lb
      \ls \sum_{i \in \ZZ} f_{i,j_i}^L \mathbbm{1}_{I_i} ,
      \sum_{i \in \ZZ} f_{i,j_i}^U \mathbbm{1}_{I_i}
      \rs
      : j_i \in \lb 1, \ldots, N_i\rb, i \in \ZZ
      \rb.
    \end{equation*}
    The cardinality of this bracketing set is $\prod_{i \in \ZZ} \NB{i}$.
    The $L_r^r$ size of a bracket $[f^L,f^U]$ in the above-defined collection
    is
    \begin{equation*}
      \int_{\RR} | f^U-f^L|^r d\lambda
      \le
      \sum_{i \in \ZZ}
      \epsilon^r  a_{i}^r.
    \end{equation*}
    By Theorem~\ref{thm:entropy_compactx}, $\log \NB{i} \le
    \tilde K_{r,h} (A_{i} / (\epsilon a_{i}))^{1/2}$ for $i \in \ZZ$ where
    $    \tilde K_{r,h}$ is the constant
    from that theorem.  Thus,
    \begin{equation*}
      \log N_{[\, ]} \lp \epsilon \lp \sum_{i \in \ZZ}
      a_{i}^r\rp^{1/r},
      {\cal P}_{M,h}^{1/2}
      , L_r
      \rp
      \le
      \tilde K_{r,h} \sum_{i \in \ZZ} \lp \frac{A_i }{\epsilon a_i} \rp^{1/2}.
    \end{equation*}
    We now set $\beta = 1/(2r+1)$, so that $ a_i^r = \lp A_{i} / a_{i}
    \rp^{1/2} = A_i^{r / (2r+1)}$ and need only to compute $ \sum_{i \in \ZZ}
    a_i^r = \sum_{i \in \ZZ} \lp A_i / a_i \rp^{1/2}.$ Let $\tilde A_i = A_i
    / D^{1/2}$, and we then see that
    \begin{align*}
      \sum_{|i| \ge 1} \tilde A_i^{r/(2r+1)} & = 2 \sum_{i \ge 1} \lp 1 +
      \frac{L}{2M} i^\gamma \rp^{- \alpha r / (2r+1)}
      \lp  (i+1)^\gamma - i^\gamma \rp^{1/(2r+1)} \\
      & \le 2 \sum_{i \ge 1} \lp 1 + \frac{L}{2M} i^\gamma \rp^{-\alpha r /
        (2r+1)}
      i^{\gamma/(2r+1)} \lp  \lp \frac{i+1}{i}\rp^\gamma - 1 \rp^{\inv{(2r+1)}} \\
      & = 2^{1 + \gamma/(2r+1)} \sum_{i \ge 1} \lp 1 + \frac{L}{2M} i^\gamma
      \rp^{-\alpha r / (2r+1)} i^{\gamma/(2r+1)} \\
      & \le 2^{1 + \gamma/(2r+1)} \sum_{i \ge 1} \lp \frac{L}{2M} i^\gamma
      \rp^{-\alpha r / (2r+1)} i^{\gamma/(2r+1)}
    \end{align*}
    which equals
    \begin{align*}
      \MoveEqLeft 2^{1 + \gamma/(2r+1)} \lp \frac{L}{2M} \rp^{-\alpha
        r/(2r+1)}
      \sum_{i \ge 1} i^{- \gamma
        \alpha r / (2r+1) + \gamma/(2r+1)} \\
      & \le  2^{1 + \gamma/(2r+1)} \lp \frac{L}{2M} \rp^{-\alpha
        r/(2r+1)} \lp 1 + \int_1^\infty x^{-\alpha \gamma r /(2r+1) +
        \gamma/(2r+1)} \rp  dx%
    \end{align*}
    which equals
    \begin{equation}
      2^{1 + \gamma/(2r+1)} \lp \frac{L}{2M} \rp^{-\alpha
        r/(2r+1)} \lp 1 + \inv{\frac{\alpha \gamma r}{2r + 1}
        - \frac{\gamma}{2r+1} - 1 } \rp
      \label{eq:sum-exclude0-arbitrary-gamma}
    \end{equation}
    as long as
    \begin{equation*}
      \frac{\alpha \gamma r}{2r + 1}     - \frac{\gamma}{2r+1} > 1
    \end{equation*}
    which is equivalent to requiring
    \begin{equation}
      \label{eq:alpha-condition-arbitrary-gamma}
      \alpha > \inv{r} + \frac{2r+1}{r} \inv{\gamma}.
    \end{equation}
    Since $\gamma \ge 1$ is arbitrary, for any $\alpha > 1/r$, we can pick
    $\gamma = ((2r+1)/r)  2 / (\alpha-1/r)$.
    Then the right side of
    \eqref{eq:alpha-condition-arbitrary-gamma} becomes $(1/r) ( 1 - 1/(2r)) +
    \alpha / (2r)$, and thus \eqref{eq:alpha-condition-arbitrary-gamma}
    becomes
    \begin{equation*}
      \alpha > \frac{\alpha + \inv{r}}{2},
    \end{equation*}
    which is satisfied for any $r \ge
    1$ and $\alpha > 1/r$.  Then \eqref{eq:sum-exclude0-arbitrary-gamma} equals
    \begin{equation*}
      2^{2 +  \frac{2}{\alpha-1/r} \inv{r}} \lp \frac{L}{2M} \rp^{ -\alpha
        r/(2r+1)}.
    \end{equation*}
    Thus, defining $K_{r,\alpha} \equiv \sum_{i \in \ZZ}  A_i^{r/(2r+1)}$, we have
    \begin{align*}
      K_{r,\alpha} & = M^{r/(2(2r+1))} (4M+2)^{1/(2r+1)}
      +
      D^{r/(2(2r+1))}2^{2 + \frac{2}{\alpha-1/r} \inv{r}} \lp \frac{L}{2M} \rp^{-\alpha
        r/(2r+1)}.
    \end{align*}
    Then we have shown that
    \begin{equation*}
      \log N_{[\, ]} \lp \epsilon K_{r,\alpha}^{1/r},
      {\cal P}_{M,h}^{1/2}
      , L_r
      \rp
      \le
      \tilde K_{r,h}
      K_{r,\alpha} \epsilon^{-1/2},
    \end{equation*}
    or
    \begin{equation*}
      \log N_{[\, ]} \lp \varepsilon,
      {\cal P}_{M,h}^{1/2}
      , L_r
      \rp
      \le
      \tilde K_{r,h}      
      K_{r,\alpha}^{1 + 1/(2r)}  \varepsilon^{-1/2},
    \end{equation*}
    and the proof is complete.
  \end{proof}

  \begin{proof}[Proof of Theorem~\ref{thm:HellingerRateTheoremFinal}]

  {\bf Step 1: Reduction from ${\cal P}_h$ to ${\cal P}_{M,h}$.}  We first show that
    we may assume, without loss of generality, for some $M > 0$ that 
    $p_0 \in {\cal P}_{M,h}$
    and, furthermore, $\widehat{p}_n \in {\cal P}_{M,h}$ with probability
    approaching $1$ as $n \to \infty$.  To see this, consider translating and
    rescaling the data: we let $\tilde X_i = (X_i - b)/a$ for $b \in \RR$
    and $a > 0$, so that the $\tilde X_i$ are i.i.d.\ with density 
    $\tilde{p}_0(x) = a p_0( ax +b) $.  Now the MLE of the rescaled data,
    $\widehat p_n( \tilde x ; \underline{\tilde X})$ satisfies 
    $\widehat{p}_n(\tilde x; \underline{\tilde X}) = a \widehat p_n( a\tilde x +b); \underline X )$ 
    and, since the Hellinger metric is invariant under
    affine transformations, it follows that
    \begin{equation*}
      H\lp \widehat p_n(\cdot; \underline X), p_0 \rp
      = H\lp \widehat p_n(\cdot ; \underline{\tilde X} ), \tilde p_0 \rp.
    \end{equation*}
    Hence if \eqref{eq:hellinger_stdrate_anysupport_UC_MC} holds for
    $\tilde{p}_0$ and the transformed data, it also holds for $p_0$ and the
    original data.  Thus, we can pick $b$ and $a$ as we wish.  First, we note
    that there is some
    interval $B(x_0, \delta) \equiv \{ z: \vert z - x_0 \vert \le  \delta \}$
    contained in the interior of  the support of $p_0 \in \mathcal{P}_h$ since $p_0$
    has integral $1$.  We take $b$ and $a$ to be $x_0$ and $ \delta$,
    and thus assume without
    loss of generality that $B(0,1)$ is in the interior of the support of $p_0$.
    Now, by Theorem~2.17 of \cite{MR2766867} %
    which holds under their assumptions (D.1)--(D.4)
    it follows that we have uniform convergence of $\widehat{p}_n $ to $p_0$
    on compact subsets strictly contained in the support of $p_0$, such as
    $B(0,1)$.  Additionally, by Lemma~3.17 of
    \cite{MR2766867}, %
    we know that $\limsup_{n \to \infty} \sup_x \widehat p_n(x) \le \sup_x p_0 (x) \equiv M_0$
    almost surely.  The assumptions (D.1)--(D.4) of
    \cite{MR2766867} %
    for $h$ are implied by our
    \eqref{assum:enum:y0-infinite}--\eqref{assum:enum:yinfty-infinite} for $g
    \equiv h^{1/2}$ (with $\beta_h = 2 \beta_g$ and $\alpha_h = 2 \alpha_g$,
    since $h'(y) = 2 \sqrt{h(y)} (h^{1/2})'(y)$ and if $g'(y) =
    o(|y|)^{-(\alpha+1)}$ then $g(y) = o(|y|)^{-\alpha}$ as $y \to -\infty$).  
    Thus, we let 
    $M = \lp 1 + M_0 \rp \vee 2/\lp  \min_{ \vert x \vert \le 1}  p_0(x) \rp < \infty$.  
    Then we can henceforth assume that $p_0 \in {\cal P}_{M,h}$
    and, furthermore, with probability approaching $1$ as $n \to \infty$,
    that $\widehat p_n \in {\cal P}_{M,h}$.  This completes step $1$.

    \smallskip

    \par\noindent
    {\bf Step 2. Control of Hellinger bracketing entropy for $\Pmh$ suffices.}
    {\bf Step 2a:}  For
    $\delta> 0$, let
    \begin{equation*}
      \overline{\mathcal{P}}_{h}(\delta) \equiv
      \{ (p+p_0)/2 : \  p\in \mathcal{P}_{h}, \ H((p+p_0)/2 , p_0) < \delta\}.
    \end{equation*}
    Suppose that we can show that
    \begin{eqnarray}
      \log N_{[\, ]}(\epsilon, \overline{\mathcal{P}}_{h}(\delta), H)
      \lesssim \epsilon^{-1/2}
      \label{eq:P1bracketbound-d}
    \end{eqnarray}
    for all $0 < \delta \le \delta_0$ for some $\delta_0>0$.  Then it follows
    from \cite{MR1385671}, Theorems 3.4.1 and 3.4.4 (with $p_n=p_0$ in
    Theorem 3.4.4) or, alternatively, from \cite{MR1739079}, Theorem 7.4 and
    an inspection of her proofs, that any $r_n$ satisfying
    \begin{eqnarray}
      \label{eq:rateequation-d}
      r_n^2 \Psi ( 1/r_n) \le \sqrt{n}
    \end{eqnarray}
    where
    \begin{eqnarray*}
      \Psi (\delta) \equiv J_{[\, ]} (\delta , \overline{\cal P}_{h} (\delta ), H)
      \left (1 + \frac{J_{[\, ]} (\delta , \overline{\cal P}_{h} (\delta ), H)}{\delta^2 \sqrt{n}} \right )
    \end{eqnarray*}
    and
    \begin{eqnarray*}
      J_{[\, ]} (\delta , \overline{\cal P}_{h} (\delta ), H)
      \equiv \int_0^\delta \sqrt{\log N_{[\, ]} (\epsilon , \overline{\cal P}_{h} (\delta ), H)} d \epsilon
    \end{eqnarray*}
    gives a rate of convergence for $H(\widehat{p}_n , p_0)$.  It is easily seen that if
    \eqref{eq:P1bracketbound-d}
    holds
    then $r_n = n^{-2/5}$
    satisfies \eqref{eq:rateequation-d}.
    Thus
    \eqref{eq:hellinger_stdrate_anysupport_UC_MC} follows from
    \eqref{eq:P1bracketbound-d}.

    {\bf Step 2b.}  Thus we want to show that \eqref{eq:P1bracketbound-d}
    holds if we have an appropriate bracketing entropy bound for ${\cal
      P}_{M,h}^{1/2}$.  First note that
    $$
    N_{[\, ]} (\epsilon, \overline{\cal P}_{h} (\delta ), H)
    \le N_{[\, ]} (\epsilon, {\cal P}_{h} (4\delta ), H)
    $$
    in view of \cite{MR1385671}, exercise 3.4.4 (or \cite{MR1739079}), Lemma 4.2, page 48).
    Furthermore,
    $$
    N_{[\, ]} (\epsilon, {\cal P}_{h} (4\delta ), H) \le N_{[\, ]} (\epsilon, {\cal P}_{M,h}, H)
    $$
    since ${\cal P}_{h} (4\delta ) \subset {\cal P}_{M,h} $ for all $0 <
    \delta \le \delta_0$ with $\delta_0 >0$ sufficiently small.  This holds
    since Hellinger convergence implies pointwise convergence for concave
    transformed functions which in turn implies uniform convergence on
    compact subsets of the domain of $p_0$ via \cite{MR0274683}, Theorem
    10.8.  See Lemma~\ref{lemUniformOnCompacts} for details of the proofs.

    Finally, note that
    \begin{eqnarray*}
      N_{[\, ]} (\epsilon, {\cal P}_{M,h}, H)
      &= & N_{[\, ]} (\epsilon, {\cal P}_{M,h}^{1/2}, L_2( \lambda/2)) \\
      & = &  N_{[\, ]} (\epsilon, {\cal P}_{M,h}^{1/2}, L_2( \lambda)/\sqrt{2})
      = N_{[\, ]} (\epsilon/\sqrt{2}, {\cal P}_{M,h}^{1/2}, L_2( \lambda))
    \end{eqnarray*}
    by the definition of $H$ and $L_2 (\lambda)$.  Thus it suffices to show that
    \begin{equation}
      \label{eq:bracketFL2-d}
      \log N_{[\, ]}(\epsilon, \mathcal{P}_{M,h}^{1/2}, L_2(\lambda))
      \lesssim  \inv{\epsilon^{1/2}}
    \end{equation}
    where the constant involved depends only on $M$ and $h$.  This completes
    the proof of Step 2, and completes the proof, since
    \eqref{eq:bracketFL2-d} is exactly what we can conclude by
    Theorem~\ref{thm:entropy_noncompactx_RRd} since we assumed
    Assumption~\ref{assum:transformation} holds and that $\alpha \equiv
    \alpha_g$ satisfies $\alpha_g > 1/2$.
  \end{proof}

  \begin{proof}[Proof of Corollary~\ref{cor:SConcavePositiveConnectCor}]
    The proof is based on the proof of
    Theorem~\ref{thm:HellingerRateTheoremFinal}.  In Step 1 of that proof,
    the only requirement on $h$ is that we can conclude that
    $\widehat{p}_n$ is almost surely Hellinger consistent. Almost sure
    Hellinger consistency is given by Theorem~2.18 of
    \cite{MR2766867} %
    which holds under their assumptions (D.1)--(D.4), which are in turn
    implied by our \eqref{assum:enum:y0-infinite},
    \eqref{assum:enum:yinfty-finite}, and
    \eqref{assum:enum:yinfty-infinite} (recalling that all of our $h$'s are
    continuously differentiable on $(\tilde{y}_0, \tilde{y}_\infty)$).

    Then Step 2a of the proof shows that
    it suffices to show the bracketing bound
    \eqref{eq:P1bracketbound-d} for $\overline{{\cal P}}_{h} (\delta)$.
    Now, by
    Lemma~\ref{lem:convex-ordered-classes} below we have
    \begin{equation*}
      \log N_{[ \,]}(\epsilon, \overline{\mathcal{P}}_{h}(\delta), H ) \le
      \log N_{[ \,]}(\epsilon, \overline{\mathcal{P}}_{h_2}(\delta), H ).
    \end{equation*}
    Step 2b of the proof shows that  \eqref{eq:P1bracketbound-d}
    holds for transforms $h$
    when $g \equiv h^{1/2}$ satisfies
    $ \alpha \equiv \alpha_g > 1/2$, as we have assumed. Thus we are done.
  \end{proof}

  \section{Appendix: Technical Lemmas and Inequalities}
  \label{sec:Appendix1}

  We begin with the proof of Proposition~\ref{prop:GSextension}. It requires
  a result from \cite{Gunt-Sen:12}, so we will state that theorem, for the
  reader's ease.  The theorem gives bounds on bracketing numbers for classes
  of convex functions that are bounded and satisfy Lipschitz constraints.
  Let
  $\cC[ [a,b], [-B,B], \Gamma]$ be the class of functions  $ f \in \cC[
  [a,b], [-B,B] ]$
  satisfying the Lipschitz constraint $|f(x)-f(y)| \le \Gamma |x-y|$ for all
  $x,y \in [a,b]$.

  \begin{theorem}[Theorem 3.2 of \cite{Gunt-Sen:12}]
    \label{thm:GS3.2}
    There exist positive constants $c$ and $\epsilon_0$ such that for all
    $a<b$ and positive $B, \Gamma$, we have
    \begin{equation*}
      \log      \Nb[ \epsilon, {\cC[ [a,b], [-B,B], \Gamma]}, L_\infty ]
      \le c \lp \frac{B + \Gamma(b-a) }{\epsilon} \rp^{1/2}
    \end{equation*}
    for all $0 < \epsilon \le \epsilon_0 \{B + \Gamma(b-a) \}.$
  \end{theorem}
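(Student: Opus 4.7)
The strategy is to exploit the fact that convex functions on an interval with bounded range and bounded Lipschitz constant admit very efficient piecewise linear approximations: only $O(\epsilon^{-1/2})$ linear pieces are needed to reach sup-norm accuracy $\epsilon$, compared with $O(\epsilon^{-1})$ for a generic Lipschitz function. This square-root savings is what ultimately produces the $1/2$ exponent in the bound.

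First, the affine change of variables $x \mapsto (x-a)/(b-a)$ sends the class to convex functions on $[0,1]$, absorbing both $B$ and $\Gamma(b-a)$ into the single size parameter $L := B + \Gamma(b-a)$, so the target becomes $\log N_{[\,]}(\epsilon, \cdot, L_\infty) \lesssim (L/\epsilon)^{1/2}$. Next, I would work with the second-derivative Radon measure $\mu_f$ associated with each convex $f$ in the class: monotonicity of $f'$ together with $|f'| \le \Gamma$ yields $\mu_f([0,1]) \le 2\Gamma$. On any interval $I_j = [x_j, x_{j+1}]$ the chord of $f$ lies above $f$ by at most an order-one multiple of $\ell_j \mu_f(I_j)$, the standard convexity estimate. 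Balancing $\ell_j \mu_f(I_j) \asymp \epsilon$ across pieces, an AM--GM argument shows that a suitably adaptive partition with $K \asymp \sqrt{L/\epsilon}$ pieces makes the chord approximation within sup-distance $\epsilon$ of $f$ on every $I_j$ simultaneously.

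To convert this into a bracketing count, I would take the piecewise-chord function as the upper bracket and the supremum of midpoint tangent lines (a well-defined piecewise-linear convex function lying below $f$) as the lower bracket. The piecewise chord is automatically globally convex because chord slopes of a convex function are non-decreasing in the interval index; both brackets automatically satisfy the Lipschitz bound $\Gamma$. Discretizing the breakpoints on a grid of spacing $\asymp \epsilon/L$ and the chord/tangent values on a grid of spacing $\asymp \epsilon$ in $[-L, L]$ yields at most $(L/\epsilon)^{O(K)}$ discrete piecewise-linear convex brackets; taking logarithms gives $O(K \log(L/\epsilon)) = O(\sqrt{L/\epsilon})$ once the logarithmic factor is absorbed by slightly worsening the constant $c$ and the cutoff $\epsilon_0$.

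The main obstacle is passing from two-sided sup approximations to genuine brackets inside the class $\cC[[a,b],[-B,B],\Gamma]$: the upper bracket must be trimmed to remain within $[-B, B]$ without introducing a non-convex kink, the lower bracket must be defined globally as a single convex function (not merely piecewise on the partition), and both must respect the Lipschitz bound uniformly. An alternative approach that sidesteps this patching, which I would fall back on if the direct construction becomes unwieldy, is Bronshtein's classical bracketing bound for compact convex subsets of $\RR^2$, applied to the epigraph of $f$ regarded as a convex body inside $[a,b] \times [-B, B]$: approximation of convex sets by inscribed and circumscribed polygons with $O(\eta^{-1/2})$ vertices immediately yields the required $\epsilon^{-1/2}$ exponent by a purely geometric route, at the cost of translating between the function and its epigraph.
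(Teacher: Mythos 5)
The paper does not prove this result from scratch: it simply cites Theorem~3.2 of \cite{Gunt-Sen:12} for the $L_\infty$ \emph{covering} number bound and then observes that in the supremum norm an $\epsilon$-cover by balls centered at $f_1,\ldots,f_N$ immediately yields the $2\epsilon$-brackets $[f_i-\epsilon, f_i+\epsilon]$, so bracketing and covering numbers in $L_\infty$ differ only by a rescaling of $\epsilon$. Your proposal instead tries to rederive the covering-number bound itself, which would be a genuinely different and more self-contained route, but as written it has a gap.

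The gap is in the discretization count. With $K \asymp (L/\epsilon)^{1/2}$ pieces and breakpoints and chord values discretized on uniform grids of spacing $\asymp \epsilon/L$ and $\asymp \epsilon$ respectively, the number of piecewise-linear candidates is of order $(L/\epsilon)^{O(K)}$, whose logarithm is $O\bigl(K\log(L/\epsilon)\bigr) \asymp (L/\epsilon)^{1/2}\log(L/\epsilon)$. The extra logarithm cannot be absorbed into the constant $c$ by choosing $\epsilon_0$ more conservatively, since $\log(L/\epsilon)$ is unbounded as $\epsilon \searrow 0$ and the theorem asserts the bound for \emph{all} $0 < \epsilon \le \epsilon_0 L$. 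Eliminating this logarithm is exactly the nontrivial content of Bronshtein's theorem and of Theorem~3.2 of Guntuboyina and Sen: one must exploit, e.g., that the slope increments across the $K$ pieces are nonnegative and sum to at most $2\Gamma$, so they can be discretized adaptively rather than placed on a common grid, which is a more delicate counting than your sketch supplies. Your fallback through Bronshtein's theorem for epigraphs would close the gap, but then you are again citing a known $\epsilon^{-1/2}$ bound rather than establishing it, which is essentially what the paper itself does. A smaller point: the bracketing functions $l$ and $u$ need not lie in $\cC[[a,b],[-B,B],\Gamma]$, so the ``patching'' concerns about forcing the piecewise-linear envelopes to stay inside $[-B,B]$, remain globally convex, and satisfy the Lipschitz bound are unnecessary for a bracketing bound.
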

  \begin{proof}
    \cite{Gunt-Sen:12} prove this statement for metric covering numbers
    rather than bracketing covering numbers, but when using the supremum
    norm, the two are equal, if $\epsilon$ is adjusted by a factor of $2$: If
    $f_1, \ldots, f_N$ are the centers of $L_\infty$ balls of radius
    $\epsilon$ that cover a function class $\cal C$, then $[f_i - \epsilon,
    f_i + \epsilon]$, $i=1,\ldots,N$, are brackets of size $2 \epsilon$ that
    cover $\cal C$ (see e.g.\ page 157, the proof of Corollary~2.7.2, of
    \cite{MR1385671}). %
  \end{proof}

  \begin{proof}[Proof of Proposition~\ref{prop:GSextension}]
    First, notice that the $L_r$ bracketing numbers scale in the following
    fashion.  For a function
    $f \in \mathcal{C}([b_1,b_2], [-B,B])$
    we can define
    \begin{equation*}
      \tilde{f}(x) := \frac{ f(b_1 + (b_2-b_1)x)
        - B}{ B},
    \end{equation*}
    a scaled and translated version of $f$ that satisfies $\tilde{f} \in
    \mathcal{C}([0,1], [-1,1])$.  Thus, if $[l,u]$ is a bracket for $
    \mathcal{C}([b_1,b_2], [-B,B])$, then we have
    \begin{equation*}
      B^r  \int_0^1 \left\vert  \tilde{u}(x)-\tilde{l}(x) \right\vert^r dx
      = \inv{b_2-b_1} \int_{b_1}^{b_2} \left\vert u(x)-l(x) \right\vert^r dx.
    \end{equation*}
    Thus an $\epsilon-$size $L_r$ bracket for $\mathcal{C}([0,1], [-1,1])$
    immediately scales to be an $\epsilon (b_2-b_1)^{1/r}B $ bracket for
    $\mathcal{C}([b_1,b_2], [-B,B])$.
    Thus for the remainder of the proof we set
    $b_1=0$, $b_2=1$, and $B=1$.

    We take the domain to be fixed for these classes so that we can apply
    Theorem 3.2 of \cite{Gunt-Sen:12} which is the building block of the
    proof.  Now we fix
    \begin{equation}
      \label{eq:defn_u_v}
      \mu:= \exp(-2(r+1)^2(r+2) \log 2)
      \;\;\; \mbox{ and } \;\;
      \nu := 1-\mu.
    \end{equation}
    (Note that $\mu$ and $\nu$ are $u$ and $v$, respectively, in
    \cite{Gunt-Sen:12}.)  We will consider the intervals $[0,\mu]$, $[\mu,
    \nu]$, and $[\nu,1]$ separately, and will show the bound
    \eqref{eq:extensionGSbound} separately for the restriction of
    $\mathcal{C}([0,1],[-1,1])$ to each of these sub-intervals. This will
    imply \eqref{eq:extensionGSbound}.  We fix $\epsilon > 0 $, let $\eta
    =(3/17)^{1/r} \epsilon $, choose an integer $A$ and $\delta_0,\ldots,
    \delta_{A+1}$ such that
    \begin{equation}
      0=\delta_0 < \eta^r = \delta_1 <   \delta_2 < \cdots
      < \delta_A < \mu \le \delta_{A+1}.
    \end{equation}
    For two functions $f$ and $g$ on $[0,1]$, we can decompose the integral
    $\int_0^1 |f-g|^r d\lambda$ as
    \begin{equation}
      \int_0^1 |f-g|^r d\lambda =
      \int_0^\mu |f-g|^r d\lambda + \int_\mu^\nu |f-g|^r d\lambda + \int_\nu^1 |f-g|^r d\lambda.
    \end{equation}
    The first term and last term are symmetric, so we consider just the first
    term, which can be bounded by
    \begin{equation}
      \int_0^\mu |f-g|^r d\lambda
      \le \sum_{m=0}^A \int_{\delta_m}^{\delta_{m+1}} \vert f - g \vert^r d\lambda,
    \end{equation}
    since $\delta_{A+1} \ge \mu$. Now for a fixed $m \in \{1,\ldots,A\}$, we
    consider the problem of covering the functions in
    $\mathcal{C}([0,1],[-1,1])$ on the interval $[\delta_m,\delta_{m+1}]$.
    Defining $\tilde{f}(x) = f(\delta_m +(\delta_{m+1}-\delta_m) x)$ and
    $\tilde{g}(x) = g(\delta_m +(\delta_{m+1}-\delta_m) x)$, we have
    \begin{equation}
      \int_{\delta_m}^{\delta_{m+1}} \vert f-g \vert^r d\lambda
      = (\delta_{m+1}-\delta_m) \int_0^1 \vert \tilde{f}-\tilde{g} \vert^r d\lambda.
    \end{equation}
    Since concavity is certainly preserved by restriction of a function, the
    restriction of any function $f$ in $\mathcal{C}([0,1],[-1,1])$ to
    $[\delta_m, \delta_{m+1}]$ belongs to the Lipschitz class
    $\mathcal{C}([\delta_m, \delta_{m+1}], [-1,1], 2/\delta_m)$ (since $f$
    cannot ``rise'' by more than $2$ over a ``run'' bounded by $\delta_m$).
    Thus the corresponding $\tilde{f}$ belongs to $\mathcal{C}([0,1], [-1,1],
    2 (\delta_{m+1}-\delta_m)/ \delta_m)$.  We can now use
    Theorem~\ref{thm:GS3.2} to assert the existence of positive constants
    $\epsilon_0$ and $c$ that depend only on $r$ such that for all $\alpha_m
    \le \epsilon_0$ there exists an $\alpha_m$-bracket for $\mathcal{C}(
    [0,1], [-1,1], 2 (\delta_{m+1}-\delta_m)/ \delta_m)$ in the supremum norm
    of cardinality smaller than
    \begin{equation}
      \exp\left( c\alpha_m^{-1/2} \left( 2 + \frac{2 (\delta_{m+1}-\delta_m)}{\delta_m}
        \right)^{1/2}
      \right)
      \le \exp \left( c \left(\frac{\delta_{m+1}}{\delta_m \alpha_m} \right)^{1/2}
      \right).
      \label{eq:covercardinality}
    \end{equation}
    Denote the brackets by $\{ [l_{m,n_m}, u_{m,n_m}]: \ n_m = 1,\ldots,
    N_m\}$ where $N_m$ is bounded by \eqref{eq:covercardinality} and $m = 1,
    \ldots, A$.  Now, define the brackets $[ l_{n_m}, u_{n_m} ]$ by
    \begin{equation}
      \label{eq:defnbrackets}
      \begin{array}{l}
        l_{n_m} (x) \equiv
        -1_{[0,\delta_1]}(x) +\sum_{m=1}^A 1_{[\delta_m,\delta_{m+1}]}(x)  \,
        l_{m,n_m}(x), \\
        u_{n_m} (x) \equiv 1_{[0,\delta_1]}(x) +\sum_{m=1}^A
        1_{[\delta_m,\delta_{m+1}]}(x) \,  u_{m,n_m}(x)
      \end{array}
    \end{equation}
    for the restrictions of the functions in $\mathcal{C}([0,1],[-1,1])$
    to the set $[0,\mu]$, where the tuple $(n_1,\ldots,n_A)$ defining the
    bracket varies over all possible tuples with components $n_m \le N_m$,
    $m=1,\ldots,A$. The brackets were chosen in the supremum norm, so we
    can compute their $L_r(\lambda)$ size as $S_1^{1/r}$ where
    \begin{equation}
      \label{eq:S1}
      S_1 = \delta_1 + \sum_{m=1}^A  \alpha_m^r (\delta_{m+1}-\delta_m),
    \end{equation}
    and the cardinality is $\exp(S_2)$ where
    \begin{equation}
      \label{eq:S2}
      S_2 = c \sum_{m=1}^A \left( \frac{2 \delta_{m+1}}{\delta_m \alpha_m} \right)^{1/2}.
    \end{equation}
    Thus our $S_1$ and $S_2$ are identical to those in (7) in
    \cite{Gunt-Sen:12}. Thus, by using their choice of $\delta_m$ and
    $\alpha_m$,
    \begin{equation*}
      \begin{split}
        \delta_m = \exp\lp r \lp \frac{r+1}{r+2} \rp^{m-1} \log \eta \rp, \\
        \alpha_m = \eta \exp \lp -r  \frac{ (r+1)^{m-2}}{(r+2)^{m-1} } \log
        \eta \rp,
      \end{split}
    \end{equation*}
    their conclusion that
    \begin{equation*}
      S_1 \le \frac{7}{3} \eta^r \mbox{ and } S_2 \le 2c  \lp\frac{2}{\eta} \rp^{1/2}
    \end{equation*}
    holds.

    An identical conclusion holds for the restriction of $f \in \cC[[0,1],[-1,1]]$ to $[\nu,1]$.  Finally, if $f \in \cC[[0,1],[-1,1]]$
    then its restriction to $[\mu,\nu]$ lies in $\cC[ [\mu,\nu], [-B,B],
    2/\mu]$, for which, via Theorem~\ref{thm:GS3.2}, for all $\eta \le
    \epsilon_0$, we can find a bracketing of size $\eta$ in the $L_r$ metric
    (which is smaller than the $L_\infty$ metric) having cardinality smaller
    than
    \begin{equation*}
      \exp \lp c \eta^{-1/2}   \lp 2 + \frac{2}{\mu } \rp^{1/2} \rp
      \le \exp \lp c \lp  \frac{2}{\mu} \rp^{1/2} \lp \frac{2}{\eta} \rp^{1/2} \rp.
    \end{equation*}
    Thus we have brackets for $\cC[[0,1],[-1,1]]$ with $L_r$ size bounded by
    \begin{equation*}
      \lp \frac{7}{3}\eta^r + \frac{7}{3}\eta^r + \eta^r \rp^{1/r}
      = \lp \frac{17}{3} \rp^{1/r} \eta,
    \end{equation*}
    and log cardinality bounded by
    \begin{equation*}
      c \lp 4 + \lp\frac{2}{\mu}\rp^{1/2} \rp  \lp\frac{2}{\eta}\rp^{1/2} .
    \end{equation*}
    Since $\eta = (3/17)^{1/r} \epsilon$, we have shown that
    \begin{equation*}
      \log N_{[\, ]} (\epsilon, \mathcal{C}([0,1], [-1,1]), L_r)
      \le C_1 \lp\frac{1}{\epsilon} \rp^{1/2}
    \end{equation*}
    for a constant $C_1$ and $\epsilon \le \epsilon_3 \equiv (17/3)^{1/r}
    \epsilon_0$.

    To extend this result to all $\epsilon > 0$, we note that if $\epsilon
    \ge 2$, we can use the trivial bracket $[-1_{[0,1]}, 1_{[0,1]}]$.  Then,
    letting $C_2 = \frac{(1/\epsilon_3)^{1/2}}{1/2^{1/2}}$, for $\epsilon_3
    \le \epsilon \le
    2$ we have
    \begin{align*}
      C_2 \cdot C_1 \epsilon^{-1/2} \ge C_1 \epsilon_3^{-1/2}
      \ge       \log N_{[\, ]} (\epsilon, \mathcal{C}([0,1], [-1,1]), L_r),
    \end{align*}
    since bracketing numbers are non-increasing.  Thus, taking $C \equiv C_2
    \cdot C_1$, we have shown \eqref{eq:extensionGSbound} holds for all
    $\epsilon > 0$ with $[b_1,b_2]=[0,1]$ and $B=1$.  By the scaling
    argument at the beginning of the proof we are now done.
  \end{proof}

  For $\delta >0$ and $\mathcal{P}_h$ consisting of all $h$-concave densities
  on $\RR$ as in \eqref{eq:defn:Ph},
  let
  \begin{equation*}
    \mathcal{P}_h(\delta) \equiv \{ p \in \mathcal{P}_h : \  H(p, p_0) <
    \delta \},
  \end{equation*}
  \begin{equation*}
    \overline{\mathcal{P}}_h(\delta) \equiv
    \{ (p+p_0)/2
    : \ p \in \mathcal{P}_h, H((p+p_0)/2 , p_0) < \delta\},
  \end{equation*}
  and let $\Pmh$ be as defined in \eqref{eq:DefnOfCalPMOneH}.
  \begin{lemma}
    \label{lemUniformOnCompacts}
    Let $\delta>0$ and $0 < \epsilon \le \delta$.
    With the definitions in the previous display
    \begin{align}
      N_{[\, ]}(\epsilon, \overline{\mathcal{P}}_h(\delta), H)
      &  \lesssim N_{[\,]}(\epsilon, \mathcal{P}_h(4 \delta),H)     \label{eq:vandegeerClass_h_1} \\
      & < N_{[\,]}(\epsilon, \mathcal{P}_{M,h}, H). \label{eq:vandegeerClass_h_2}
    \end{align}
  \end{lemma}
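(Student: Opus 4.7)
The plan is to prove the two inequalities separately: the first by propagating a bracketing of $\mathcal{P}_h(4\delta)$ through the mixture map $p \mapsto (p + p_0)/2$, and the second by establishing the set containment $\mathcal{P}_h(4\delta) \subseteq \mathcal{P}_{M,h}$ for all $\delta$ sufficiently small.

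For \eqref{eq:vandegeerClass_h_1}, I would start from a minimal $\epsilon$-bracketing $\{[l_k, u_k]\}$ of $\mathcal{P}_h(4\delta)$ in Hellinger distance and form the candidate brackets $\{[(l_k + p_0)/2, (u_k + p_0)/2]\}$ for $\overline{\mathcal{P}}_h(\delta)$. Coverage follows from the standard mixture inequality $H(p, p_0) \le 4 H((p + p_0)/2, p_0)$ (\cite{MR1385671}, Exercise~3.4.4, or \cite{MR1739079}, Lemma~4.2): if $\bar p = (p + p_0)/2 \in \overline{\mathcal{P}}_h(\delta)$ then $p \in \mathcal{P}_h(4\delta)$, hence $p \in [l_k, u_k]$ for some $k$, and so $\bar p \in [(l_k + p_0)/2, (u_k + p_0)/2]$. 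For bracket size, the pointwise calculation
\begin{equation*}
\sqrt{(u+p_0)/2} - \sqrt{(l+p_0)/2} = \frac{(u-l)/2}{\sqrt{(u+p_0)/2} + \sqrt{(l+p_0)/2}} \le \frac{(u-l)/2}{(\sqrt u + \sqrt l)/\sqrt 2} = \frac{\sqrt u - \sqrt l}{\sqrt 2},
\end{equation*}
using $(u+p_0)/2 \ge u/2$ and the analogous bound for $l$, integrates to $H^2((u+p_0)/2, (l+p_0)/2) \le \tfrac{1}{2} H^2(u, l)$; the transformed brackets therefore have Hellinger size at most $\epsilon/\sqrt 2 \le \epsilon$. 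Monotonicity of bracketing numbers in the size parameter then yields \eqref{eq:vandegeerClass_h_1}.

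For \eqref{eq:vandegeerClass_h_2}, I would choose $M$ as in Step~1 of the proof of Theorem~\ref{thm:HellingerRateTheoremFinal} so that $p_0 \in \mathcal{P}_{M,h}$ with room, and then prove $\mathcal{P}_h(4\delta) \subseteq \mathcal{P}_{M,h}$ for $\delta$ small enough; the entropy inequality is then immediate from monotonicity of bracketing numbers under set inclusion. For $p = h \circ \varphi \in \mathcal{P}_h$ with $H(p, p_0)$ small, Hellinger convergence descends to $\lambda$-a.e.\ pointwise convergence along subsequences, and the concavity of $\varphi$ together with Theorem~10.8 of \cite{MR0274683} upgrades this to uniform convergence on compact subsets of the interior of $\dom p_0$. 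Applied to a neighborhood of $[-1, 1]$ --- which we may assume is interior to $\dom p_0$ by the reduction used in Step~1 of Theorem~\ref{thm:HellingerRateTheoremFinal} --- this delivers the lower bound $p \ge 1/M$ on $[-1, 1]$. For the upper bound $\sup p \le M$, I would combine uniform convergence on a compact neighborhood of $m_{p_0}$ with a spike-exclusion argument based on concavity and the unit-mass constraint $\int p \, d\lambda = 1$: any sequence $p_n \to p_0$ in Hellinger with $p_n(m_{p_n}) \to \infty$ would, by concavity of $\varphi_n$, be forced to concentrate on intervals shrinking around $m_{p_n}$, which is incompatible with Hellinger proximity to the non-degenerate $p_0$.

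The hardest step is the upper bound $\sup p \le M$: the Rockafellar-based uniform-on-compacta convergence gives the lower bound on $[-1, 1]$ essentially for free, but excluding vertical spikes at the mode requires a genuine use of both concavity and the mass constraint, since Hellinger convergence does not by itself control supremum norms.
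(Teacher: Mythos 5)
Your proof follows the same two-step route as the paper. For \eqref{eq:vandegeerClass_h_1}, both arguments propagate an $\epsilon$-bracketing of $\mathcal{P}_h(4\delta)$ through the mixture map $p \mapsto (p+p_0)/2$, using the factor-$4$ Hellinger inequality for coverage and a $1/\sqrt{2}$ Hellinger contraction for bracket size; the paper simply cites van de Geer's (4.5)--(4.6) for these two facts, whereas you derive the $1/\sqrt{2}$ factor by a pointwise computation. For \eqref{eq:vandegeerClass_h_2}, both establish the set inclusion $\mathcal{P}_h(4\delta) \subseteq \mathcal{P}_{M,h}$ for $\delta$ sufficiently small by upgrading Hellinger closeness to uniform closeness on $[-1,1]$ (the paper by invoking the proof of Lemma~3.14 of \cite{MR2766867}; you via Rockafellar's Theorem~10.8, which is the same ingredient that lemma rests on), and then monotonicity of bracketing numbers under set inclusion.

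The one place your sketch goes astray is the justification of the upper bound $\sup_x p(x) \le M$. The claim that $p_n(m_{p_n}) \to \infty$ forces $p_n$ to ``concentrate on intervals shrinking around $m_{p_n}$'' in a way incompatible with Hellinger proximity is not the right mechanism: a unimodal density can carry a very tall, very narrow peak of vanishing mass while keeping essentially all of its mass close to $p_0$ elsewhere, so Hellinger proximity alone does not kill a spike of small mass. What actually rules out the spike for a concave-transformed $p_n$ is the slope control that concavity supplies once $\varphi_n$ is pinned to $\varphi_0$ on $[-1,1]$: the one-sided derivative of $\varphi_n$ at $1$ (resp.\ $-1$) is dominated by the chord slope of $\varphi_n$ over $[0,1]$, which converges to the nonpositive chord slope of $\varphi_0$ there, so $\varphi_n$ --- and with it $p_n$ --- cannot rise appreciably beyond $\pm 1$; the unit-mass constraint then bounds how far out $m_{p_n}$ can sit in the borderline case where $\varphi_0$ is flat on $[0,1]$. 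This is exactly the substance behind the paper's own (equally terse) assertion that the mode of $p$ must lie in $[-1,1]$ once $\delta$ is small, so the gap is shared, but your stated reason would not survive being filled in as written.
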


  \begin{proof}
    We will follow the notation in \cite{MR1739079} %
    (see e.g.\ chapter 4) and set $\overline{p} = (p+ p_0) / 2$ for any
    function $p$.  Then if $\overline{p}_1 \in
    \overline{\mathcal{P}}_h(\delta)$, by (4.6) on page 48 of
    \cite{MR1739079}, we have $H(p_1,p_0) < 4 H(\overline{p}_1,p_0) <
    4\delta$, so that $p_1 \in \mathcal{P}_h(4\delta)$. Then given
    $\epsilon-$brackets $[l_\alpha,u_\alpha]$, of $\mathcal{P}_h(4\delta)$,
    with $1 \le \alpha \le N_{[\,]}(\epsilon, \mathcal{P}_h(4\delta), H)$, we
    can construct brackets of $\overline{\mathcal{P}}_h(\delta)$ since for any
    $p_1 \in \mathcal{P}_h(4\delta)$ which is bracketed by
    $[l_\alpha,u_\alpha]$ for some $\alpha$, $\overline{p}_1$ is bracketed by
    $[\overline{l}_\alpha,\overline{u}_\alpha]$, so that
    $[\overline{l}_\alpha,\overline{u}_\alpha]$ form a collection of brackets
    for $\overline{\mathcal{P}}_h(\delta)$ with size bounded by
    \begin{equation*}
      H(\overline{l}_\alpha,\overline{u}_\alpha) \le \frac{1}{\sqrt{2}} H(l_\alpha,u_\alpha)
      < \frac{1}{\sqrt{2}} \epsilon,
    \end{equation*}
    where we used (4.5) on page 48 of \cite{MR1739079}.  Thus we have a collection of
    brackets of Hellinger size $\epsilon / \sqrt{2} < \epsilon$ with cardinality bounded by
    $N_{[\,]}(\epsilon, \mathcal{P}_h(4 \delta),H)$ and
    \eqref{eq:vandegeerClass_h_1} holds.

    Next we show \eqref{eq:vandegeerClass_h_2}, which will follow from showing
    $\mathcal{P}_h(4 \delta) \subset \mathcal{P}_{M,h}$.  Now if
    $0 < M^{-1} < \inf_{x \in [-1,1]} p_0(x)$ then for any $p$  that
    has its mode in $[-1,1]$ and satisfies
    \begin{equation}
      \label{eq:hellingertouniformbound}
      \sup_{x \in [-1,1]} | p(x)- p_0(x) | \le \min\left( \inf_{x \in [-1,1]} p_0(x) -M^{-1},
        M- \sup_{x \in [-1,1]} p_0(x) \right),
    \end{equation}
    we can conclude that $ p \in \mathcal{P}_{M,h}$.

    The proof of Lemma 3.14 of \cite{MR2766867}
    shows that for any sequence of $h$-concave densities $p_i$,
    \begin{equation}
      \label{eq:hellingerimpliesunif}
      H(p_i,p_0) \to 0 \;\; \mbox{ implies } \;\; \sup_{x \in [-1,1]} | p_i (x) - p_0(x)| \to  0.
    \end{equation}
    This says that the topology defined by the Hellinger metric has more open
    sets than that defined by the supremum distance on $[-1,1]$, which implies
    that open supremum balls are nested within open Hellinger balls, i.e.\ for
    $\epsilon >0$
    \begin{equation}
      \label{eq:openballs}
      B_{\epsilon}(p_0, \sup_{[-1,1]})
      \subseteq B_{4 \delta}(p_0,H)
    \end{equation}
    for some $\delta > 0$, where $B_\epsilon(p_0,d)$ denotes an open ball
    about $p_0$ of size $\epsilon$ in the metric $d$.

    Now, if $p$ is uniformly within $\epsilon$ of $p_0$ on $[-1,1]$, then for
    $\epsilon$ small enough we know that the mode of $p$ is in $[-1,1]$.
    Thus for $0 < M^{-1} < \inf_{x \in [-1,1]} p_0(x)$ and $\delta $ small
    enough, any $p \in \mathcal{P}_h(4\delta)$ is also in $\mathcal{P}_{M,h}$
    as desired, and so \eqref{eq:vandegeerClass_h_2} has been shown.
  \end{proof}

  \begin{lemma}
    \label{lem:h-inverse}
    For a concave-function transformation $h$ that satisfies
    Assumption~\ref{assum:enum:y0-infinite}, we can have that $h^{-1}$ is
    nondecreasing and as $f \searrow 0$,
    \begin{equation}
      \label{eq:littleo-alpha}
      h^{-1}(f) = o(f^{-1/\alpha}).
    \end{equation}
    In particular, for $ f \in (0,L]$, $h^{-1}(f) \le M_{L} f^{-1/\alpha}$.
  \end{lemma}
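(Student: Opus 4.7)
The plan is to convert the hypothesized decay of $h'$ at $-\infty$ into a decay rate of $h$ itself, and then to invert that estimate to obtain the claimed growth rate of $h^{-1}$ near $0$. The monotonicity of $h^{-1}$ is immediate: since $h$ is continuously differentiable and (strictly) increasing on $(\tilde y_0,\tilde y_\infty)$ with $h(-\infty)=0$, its inverse $h^{-1}$ is well-defined and nondecreasing on the range of $h$.

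For the quantitative part, I would integrate the derivative bound. Given $\varepsilon>0$, Assumption~\ref{assum:enum:y0-infinite} supplies a threshold $Y_\varepsilon<0$ such that $0 \le h'(t)\le \varepsilon |t|^{-(\alpha+1)}$ for all $t\le Y_\varepsilon$. Because $h(-\infty)=0$, we have $h(y)=\int_{-\infty}^{y} h'(t)\,dt$, and since $\alpha>1$ the integral $\int_{-\infty}^{y}|t|^{-(\alpha+1)}\,dt$ converges and equals $|y|^{-\alpha}/\alpha$ for $y<0$. This yields $h(y)\le (\varepsilon/\alpha)\,|y|^{-\alpha}$ for all $y\le Y_\varepsilon$, so $h(y)|y|^{\alpha}\to 0$, i.e., $h(y)=o(|y|^{-\alpha})$ as $y\searrow -\infty$.

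Inverting is then a matter of substitution. Setting $f=h(y)$, so that $y=h^{-1}(f)$ and $f\searrow 0$ as $y\searrow -\infty$, the relation $h(y)|y|^\alpha\to 0$ becomes $f\,|h^{-1}(f)|^{\alpha}\to 0$, which is $|h^{-1}(f)|=o(f^{-1/\alpha})$. For the ``in particular'' clause, the $o$-statement yields some $\delta\in(0,L]$ with $|h^{-1}(f)|f^{1/\alpha}\le 1$ on $(0,\delta]$; on the compact interval $[\delta,L]$ the continuity of $h^{-1}$ and of $f\mapsto f^{1/\alpha}$ bound the product by a finite constant, and $M_L$ may be taken to be the maximum of the two bounds. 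There is no real obstacle here; the only mild awkwardness is the sign convention, since $h^{-1}(f)$ is negative (and large in absolute value) for small $f$ while $f^{-1/\alpha}$ is positive, so both the $o(\cdot)$ statement and the concluding inequality are to be read in the sense of absolute values, which matches the way the lemma is later invoked (through $\log_2|h^{-1}(\varepsilon/\varepsilon_0)|$) in the proof of Theorem~\ref{thm:entropy_compactx}.
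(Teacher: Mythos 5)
Your proof is correct and takes essentially the same path as the paper's: convert the $o$-bound on $h'$ into an $o$-bound on $h$, then invert. The only stylistic difference is in the inversion step: you substitute $f=h(y)$ directly into the estimate $h(y)|y|^\alpha\to 0$, whereas the paper first records a small general fact (if $h\le g$ are increasing then $g^{-1}\le h^{-1}$ on the common range) and then applies it with the reference function $g(y)=\delta(-y)^{-\alpha}$, for which $g^{-1}$ is explicit. Both routes are equivalent; yours is arguably a little more elementary, and it also makes explicit the integration step $h(y)=\int_{-\infty}^y h'(t)\,dt$ (using $h(-\infty)=0$ and $\alpha>1$ for convergence), which the paper leaves implicit when it invokes $h(y)\le \delta(-y)^{-\alpha}$. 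Your closing remark about signs — that the $o$-statement and the final inequality must be read in absolute value, since $h^{-1}(f)<0$ for small $f$ — is also the right reading, and it matches how the lemma is invoked via $|y_{k_\epsilon}|$ in the entropy proof.
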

  \begin{proof}
    Let $\im h = h(\dom h)$.
    For two increasing functions $h \le g$ defined on $(-\infty,\infty)$ taking
    values in $[-\infty,\infty]$, where $\im h$ and $\im g$ are both intervals,
    we will show that $g^{-1}(f) \le h^{-1}(f) $ for any $f \in \im h \cap \im
    g$. By definition, for such $f$, we can find a $z \in (-\infty,\infty)$
    such that $f = g(z)$.  That is, $g(z) = h(h^{-1})(f) \le g(h^{-1}(f))$
    since $h \le g$. Applying $g^{-1}$, we see $z = g^{-1}(f) \le h^{-1}(f)$,
    as desired.

    Then \eqref{eq:littleo-alpha} follows by letting $g(y) = \delta
    (-y)^{-\alpha}$, which has $g^{-1}(f) = - (\inv{\delta} f)^{-1/\alpha}$.
    The statement that $h^{-1}(f) \le M_L f^{-1/\alpha}$ follows since on
    neighborhoods away from $0$,
    $h^{-1}$ is bounded above and $f\mapsto f^{-1/\alpha}$ is bounded below.

    To see that $h^{-1}$ is nondecreasing, we differentiate to see
    $(h^{-1})'(f) = 1/h'(h^{-1}(f))$. Since $h' \ge 0$ so is $(h^{-1})'$.
  \end{proof}

  \label{subsec:sConcaveDensityBounds}

  \begin{proposition}
    \label{propGeneralHconcaveUpperBounds}
    Let $h$ be a concave-function transformation and $f= h \circ \varphi $
    for $\varphi \in {\cal C}$ and let $F (x) = \int_{-\infty}^x f(y)\,
    dy$. Then for $x_0 < x_1 < x$ or $x < x_1 < x_0$, all such that $- \infty
    < \varphi(x) < \varphi(x_1) < \varphi(x_0) < \infty$, we have
    \begin{equation}
      f(x) \le
      h\left(
        \varphi(x_0) - h(\varphi(x_1)) \frac{\varphi(x_0) - \varphi(x_1)}{F(x)-F(x_0)} (x-x_0)
      \right).   \label{eq:f-bound-1}
    \end{equation}
  \end{proposition}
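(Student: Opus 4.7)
The plan is to exploit two ingredients: concavity of $\varphi$ (to extrapolate linearly past $x_1$) together with monotonicity of $h$ (so the bound propagates through the transform), and then to bound the relevant slope using the mass $F(x)-F(x_0)$ rather than the raw gap $x_1-x_0$. I treat the case $x_0<x_1<x$; the reverse ordering is symmetric after reflection.

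First, by concavity of $\varphi$, secant slopes are nonincreasing, so
\begin{equation*}
\frac{\varphi(x)-\varphi(x_0)}{x-x_0}\ \le\ \frac{\varphi(x_1)-\varphi(x_0)}{x_1-x_0},
\end{equation*}
which rearranges to
\begin{equation*}
\varphi(x)\ \le\ \varphi(x_0)\ -\ \frac{\varphi(x_0)-\varphi(x_1)}{x_1-x_0}\,(x-x_0).
\end{equation*}
Since $h$ is increasing, applying it to both sides converts this into the ``linear extrapolation'' form of \eqref{eq:f-bound-1} but with $1/(x_1-x_0)$ in place of $h(\varphi(x_1))/(F(x)-F(x_0))$.

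The remaining step, and really the heart of the argument, is therefore to show
\begin{equation*}
F(x)-F(x_0)\ \ge\ h(\varphi(x_1))\,(x_1-x_0),
\end{equation*}
so that substituting this into the slope weakens (enlarges) the argument of $h$ and preserves the inequality. For this I use concavity once more: on $[x_0,x_1]$ the graph of $\varphi$ lies above its chord, whose values lie between $\varphi(x_0)$ and $\varphi(x_1)$, hence $\varphi(y)\ge\varphi(x_1)$ for $y\in[x_0,x_1]$ (recall we are in the regime $\varphi(x_0)>\varphi(x_1)$). Monotonicity of $h$ then yields $f(y)=h(\varphi(y))\ge h(\varphi(x_1))$ on $[x_0,x_1]$, and integrating gives
\begin{equation*}
F(x)-F(x_0)\ \ge\ \int_{x_0}^{x_1} h(\varphi(y))\,dy\ \ge\ h(\varphi(x_1))\,(x_1-x_0),
\end{equation*}
which is exactly what is needed.

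I do not foresee a genuine obstacle: the proof is a short two-line chain once one sees the trick of bounding $x_1-x_0$ from above by $(F(x)-F(x_0))/h(\varphi(x_1))$. The only thing to double-check is that all the divisions are legitimate, i.e.\ that $F(x)-F(x_0)>0$ (which holds because $\varphi(x)>-\infty$ forces $h\circ\varphi$ to be strictly positive on a neighborhood of $x$, so $f$ is not identically zero on $[x_0,x]$) and that $\varphi(x_0)-\varphi(x_1)>0$ (given by hypothesis). The case $x<x_1<x_0$ follows verbatim after replacing $x-x_0$ by $|x-x_0|$ and integrating over $[x,x_0]$ instead of $[x_0,x]$.
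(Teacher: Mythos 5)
Your proof is correct, and it takes a genuinely more elementary route than the paper's. The paper first establishes the mean-value lower bound $F(x)-F(x_0)\ge(x-x_0)\int_0^1 h\bigl((1-u)\varphi(x_0)+u\varphi(x)\bigr)\,du$ (concavity of $\varphi$ plus monotonicity of $h$), then truncates the $u$-integral at $\eta=(\varphi(x_0)-\varphi(x_1))/(\varphi(x_0)-\varphi(x))$ and performs a change of variables $v=u/\eta$ to identify the truncated piece as $\eta\,C$, where $C=\int_0^1 h\bigl((1-v)\varphi(x_0)+v\varphi(x_1)\bigr)\,dv\ge h(\varphi(x_1))$; solving the resulting inequality for $\varphi(x)$ and applying $h$ then yields \eqref{eq:f-bound-1}. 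You instead decouple the two ingredients cleanly: the secant-slope inequality from concavity gives $\varphi(x)\le\varphi(x_0)-\frac{\varphi(x_0)-\varphi(x_1)}{x_1-x_0}(x-x_0)$ outright, and the denominator $x_1-x_0$ is then replaced by the larger quantity $(F(x)-F(x_0))/h(\varphi(x_1))$ via the one-line bound $F(x)-F(x_0)\ge\int_{x_0}^{x_1}h(\varphi)\,d\lambda\ge h(\varphi(x_1))(x_1-x_0)$, which uses only that $\varphi\ge\varphi(x_1)$ on $[x_0,x_1]$ (again concavity, since $\varphi(x_0)>\varphi(x_1)$). Both arguments rest on the same three facts --- concavity of $\varphi$, monotonicity of $h$, and a lower bound on the mass $F(x)-F(x_0)$ --- but your decomposition dispenses with the change of variables and the auxiliary quantity $C$ entirely, making the proof shorter and more transparent. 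The monotone substitution goes in the right direction (a larger denominator makes the argument of $h$ larger), and the positivity of $F(x)-F(x_0)$ is unproblematic in the regime where the statement is used, $\varphi(x_1)>\tilde y_0$, so that $h(\varphi(x_1))>0$; the paper makes the same implicit assumption when it asserts $0<h(\varphi(x_1))\le C$.
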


  \begin{proof} %
    Take  $x_1 , x_2 \in \RR$ with $x_1 < x_2$. Then
    \begin{align*}
      F(x_2)-F(x_1)
      & = \int_{x_1}^{x_2} f(x) \, dx
      = \int_{x_1}^{x_2} h(\varphi(x)) \, dx \\
      & = \int_{x_1}^{x_2} h \left( \varphi \left(
          \frac{x_2-x}{x_2-x_1} x_1
          + \frac{x-x_1}{x_2-x_1} x_2
        \right) \right) \, dx,
    \end{align*}
    and since $h$ is nondecreasing and $\varphi $ is concave, the above is not smaller than
    \begin{align*}
      \int_{x_1}^{x_2} h\left(
        \frac{x_2-x}{x_2-x_1} \varphi(x_1)
        + \frac{x-x_1}{x_2-x_1} \varphi(x_2)
      \right) \, dx,
    \end{align*}
    which, by the change of variables $u = (x-x_1) / (x_2-x_1)$, can be written as
    \begin{equation}
      \label{eq:gdf-concavity-bound}
      \int_0^1 h\left(
        (1-u) \varphi(x_1) + u \varphi(x_2)
      \right)
      (x_2-x_1) \, du.
    \end{equation}

    Now we let $x_1 = x_0 $ and $x_2 = x $ with $x_0 < x_1 < x$ as in the statement.
    Since $x_0$ and $x_1$ are in $\dom \varphi$,
    \begin{equation}
      \label{eq:defn:C}
      C\equiv  \int_0^1 h( (1-u)\varphi(x_0) + u \varphi(x_1)) \, du
    \end{equation}
    satisfies
    \begin{equation}
      \label{eq:C-bounds}
      0 < h(\varphi(x_1))  \le C \le h(\varphi(x_0)).
    \end{equation}
    Now, let $\eta = (\varphi(x_0)-\varphi(x_1)) / (\varphi(x_0) - \varphi(x))$, so that $\eta
    \in (0,1)$ by the assumption of the proposition.  Then
    \begin{eqnarray*}
      \lefteqn{ \int_0^1 h( (1-u) \varphi(x_0) + u \varphi(x)) \, du  }\\
      & = & \left( \int_0^{ \eta } + \int_{ \eta }^1 \right) h( (1-u)\varphi(x_0) + u \varphi(x)) \, du \\
      &   \ge & \int_0^{  \eta } h( (1-u)\varphi(x_0) + u \varphi(x)) \, du.
    \end{eqnarray*}
    Then by the substitution
    $v = u / \eta$,
    this is equal to
    \begin{equation}
      \label{eq:}
      \int_0^1 h \left(
        (1 - \eta v )\varphi(x_0)
        +   \eta v \varphi(x)  \right) \eta  \, dv.
    \end{equation}
    which is
    \begin{equation}
      \int_0^1 h \left(
        (1-v) \varphi(x_0) + v \varphi(x_1)
      \right)
      \frac{\varphi(x_0) - \varphi(x_1)}{\varphi(x_0) - \varphi(x)}
      \, dv,
      \label{eq:C-inv-vpx}
    \end{equation}
    by the construction of $\eta$, i.e.\ because
    \begin{align*}
      (1 - \eta v )\varphi(x_0)
      +   \eta v \varphi(x)
      & =   \left(1 - \frac{ \varphi(x_0) - \varphi(x_1)}{\varphi(x_0)-\varphi(x)}v \right)\varphi(x_0)
      +  \frac{ \varphi(x_0) - \varphi(x_1)}{\varphi(x_0)-\varphi(x)}v \varphi(x) \\
      & = v \varphi(x_0) + \frac{\varphi(x_0) - \varphi(x_1)}{ \varphi(x_0) - \varphi(x) } v (\varphi(x) - \varphi(x_0))\\
      & = v \varphi(x_0) - v (\varphi(x_0) - \varphi(x_1))\\
      & = (1-v) \varphi(x_0) + v \varphi(x_1).
    \end{align*}
    And, by definition of $C$, \eqref{eq:C-inv-vpx} equals
    $  C (\varphi(x_0) - \varphi(x_1))/(\varphi(x_0) - \varphi(x))$.
    This gives, by applying \eqref{eq:gdf-concavity-bound}, that
    \begin{align}
      F(x)-F(x_0) & \ge (x-x_0) \int_0^1 h( (1-u)\varphi(x_0) + u \varphi(x) )\, du \notag \\
      & \ge   (x-x_0) C \frac{\varphi(x_0) - \varphi(x_1)}{\varphi(x_0) - \varphi(x)}. \label{eq:final-gdf-ineq}
    \end{align}

    Now we rearrange the above display to get an inequality for $\varphi(x)$.
    From \eqref{eq:final-gdf-ineq}, we have
    \begin{equation*}
      \varphi(x) \le \varphi(x_0) - C \frac{\varphi(x_0) - \varphi(x_1)}{F(x)-F(x_0)} (x-x_0),
    \end{equation*}
    and, since $h$ is nondecreasing,
    \begin{align*}
      h(\varphi(x))
      &\le h\left(
        \varphi(x_0) - C \frac{\varphi(x_0) - \varphi(x_1)}{F(x)-F(x_0)} (x-x_0)
      \right) \notag \\
      & \le
      h\left(
        \varphi(x_0) - h(\varphi(x_1)) \frac{\varphi(x_0) - \varphi(x_1)}{F(x)-F(x_0)} (x-x_0)
      \right),
    \end{align*}
    by \eqref{eq:C-bounds}.  This proves the claim for $x_0 < x_1 < x$.  The proof
    for $x < x_1 < x_0$ is similar.
  \end{proof}

  \begin{lemma}
    \label{lem:transform-basic-facts}
    If $g \equiv h^{1/2}$ is a concave-function transformation satisfying
    $g'(y) = o(|y|^{-(\alpha_g + 1)})$ then $g(y) = o(|y|^{-\alpha_g})$,
    $h(y) = o(|y|^{-2\alpha_g})$, and $h'(y) = o(|y|^{-(2\alpha_g+1)})$ as $y
    \to -\infty$.
  \end{lemma}
  \begin{proof}
    Since for any $\delta > 0$ we can find $N>0$ where for $y < -N$, $g(x) =
    \int_{-\infty}^x g'(y)dy \le \delta \int_{-\infty}^x
    (-y)^{-(\alpha_g+1)}$, we conclude that $g(y) = o(|y|^{-\alpha_g})$.  It
    follows additionally that $h(y) = o(|y|^{-2\alpha_g})$.  Thus for $\delta
    > 0$ there exists $N$ such that for $y < -N$, $h^{-1/2}(y) \ge
    \delta^{-1/2} |y|^{\alpha}$, and so we have that
    \begin{equation*}
      \delta |y|^{-(\alpha_g +1)} \ge h^{-1/2}(y) h'(y) 
      \ge \delta^{-1/2} |y|^{\alpha_g} h'(y)
    \end{equation*}
    since $2 g'(y) = h^{-1/2}(y) h'(y)$, so that $\delta^{3/2}
    |y|^{-(2\alpha_g+1)} \ge h'(y)$, as desired.
  \end{proof}

  \section*{Acknowledgements}
  We owe thanks to Arseni Seregin, Bodhi Sen, and Fadoua Balabdaoui as well
  as two referees for helpful comments, suggestions, and corrections.  Thanks
  also to Tilmann Gneiting for hospitality during our visits to Heidelberg.
  \bibliographystyle{imsart-number}
  \bibliography{HellingerRateNMR}

\end{document}